\documentclass[12pt,a4paper,nosumlimits]{amsart}
\usepackage{amssymb,amsxtra,eucal}
\usepackage[cmtip,arrow]{xy}
\usepackage{pb-diagram,pb-xy}

\pagestyle{plain}
\raggedbottom

\textwidth=36pc
\calclayout

\emergencystretch=2em

\newcommand{\+}{\nobreakdash-}
\renewcommand{\:}{\colon}
\renewcommand{\;}{,\medspace}
\renewcommand{\.}{\text{$\mskip .5\thinmuskip$}}

\renewcommand{\le}{\leqslant}
\renewcommand{\ge}{\geqslant}

\newcommand{\rarrow}{\longrightarrow}

\newcommand{\birarrow}{\rightrightarrows}
\newcommand{\ot}{\otimes}
\newcommand{\ocn}{\odot}
\newcommand{\Ocn}{\circledcirc}
\newcommand{\st}{\star}

\renewcommand{\ss}{{\mathrm{ss}}}
\newcommand{\rop}{{\mathrm{op}}}

\newcommand{\sop}{{\mathsf{op}}}

\newcommand{\oc}{\mathbin{\text{\smaller$\square$}}}
\newcommand{\bu}{{\text{\smaller\smaller$\scriptstyle\bullet$}}}
\newcommand{\lrarrow}{\.\relbar\joinrel\relbar\joinrel\rightarrow\.}

\newcommand{\B}{{\mathcal B}}
\newcommand{\C}{{\mathcal C}}
\newcommand{\D}{{\mathcal D}}
\newcommand{\E}{{\mathcal E}}
\newcommand{\K}{{\mathcal K}}
\renewcommand{\L}{{\mathcal L}}
\newcommand{\M}{{\mathcal M}}
\newcommand{\N}{{\mathcal N}}
\newcommand{\I}{{\mathcal I}}
\newcommand{\J}{{\mathcal J}}

\newcommand{\fR}{{\mathfrak R}}
\newcommand{\fF}{{\mathfrak F}}
\newcommand{\fK}{{\mathfrak K}}
\renewcommand{\P}{{\mathfrak P}}
\newcommand{\Q}{{\mathfrak Q}}

\renewcommand{\S}{{\boldsymbol{\mathcal S}}}
\newcommand{\T}{{\boldsymbol{\mathcal T}}}
\newcommand{\bB}{{\boldsymbol{\mathcal B}}}
\newcommand{\bK}{{\boldsymbol{\mathcal K}}}
\newcommand{\bL}{{\boldsymbol{\mathcal L}}}
\newcommand{\bM}{{\boldsymbol{\mathcal M}}}
\newcommand{\bN}{{\boldsymbol{\mathcal N}}}
\newcommand{\bF}{{\boldsymbol{\mathfrak F}}}
\newcommand{\bP}{{\boldsymbol{\mathfrak P}}}
\newcommand{\bQ}{{\boldsymbol{\mathfrak Q}}}

\renewcommand{\b}{{\mathsf{b}}}
\newcommand{\co}{{\mathsf{co}}}
\newcommand{\ctr}{{\mathsf{ctr}}}
\newcommand{\si}{{\mathsf{si}}}
\newcommand{\abs}{{\mathsf{abs}}}
\newcommand{\inj}{{\mathsf{inj}}}
\newcommand{\proj}{{\mathsf{proj}}}

\newcommand{\boR}{{\mathbb R}}
\newcommand{\boL}{{\mathbb L}}
\newcommand{\Z}{{\mathbb Z}}

\newcommand{\sD}{{\mathsf D}}
\newcommand{\Hot}{{\mathsf{Hot}}}

\newcommand{\vect}{{\operatorname{\mathsf{--vect}}}}
\newcommand{\modl}{{\operatorname{\mathsf{--mod}}}}
\newcommand{\comodl}{{\operatorname{\mathsf{--comod}}}}
\newcommand{\comodr}{{\operatorname{\mathsf{comod--}}}}
\newcommand{\bicomod}{{\operatorname{\mathsf{--comod--}}}}
\newcommand{\simodl}{{\operatorname{\mathsf{--simod}}}}
\newcommand{\simodr}{{\operatorname{\mathsf{simod--}}}}
\newcommand{\sicntr}{{\operatorname{\mathsf{--sicntr}}}}

\newcommand{\smooth}{{\operatorname{\mathsf{--smooth}}}}
\newcommand{\contra}{{\operatorname{\mathsf{--contra}}}}

\newcommand{\tors}{{\operatorname{\mathsf{-tors}}}}
\newcommand{\ctra}{{\operatorname{\mathsf{-ctra}}}}

\DeclareMathOperator{\Hom}{Hom}
\DeclareMathOperator{\Cohom}{Cohom}

\DeclareMathOperator{\Tor}{Tor}
\DeclareMathOperator{\Ctrtor}{Ctrtor}
\DeclareMathOperator{\CtrTor}{CtrTor}

\newcommand{\Section}[1]{\bigskip\section{#1}\medskip}
\setcounter{tocdepth}{1}

\theoremstyle{plain}
\newtheorem{thm}{Theorem}[section]

\newtheorem{lem}[thm]{Lemma}
\newtheorem{prop}[thm]{Proposition}

\theoremstyle{definition}
\newtheorem{ex}[thm]{Example}
\newtheorem{exs}[thm]{Examples}

\begin{document}

\title{Dedualizing complexes of bicomodules \\
and MGM duality over coalgebras}

\author{Leonid Positselski}

\address{Department of Mathematics, Faculty of Natural Sciences,
University of Haifa, Mount Carmel, Haifa 31905, Israel; and
\newline\indent Laboratory of Algebraic Geometry, National Research
University Higher School of Economics, Moscow 119048; and
\newline\indent Sector of Algebra and Number Theory, Institute for
Information Transmission Problems, Moscow 127051, Russia; and
\newline\indent Charles University in Prague, Faculty of Mathematics
and Physics, Department of Algebra, Sokolovsk\'a~83,
186~75 Praha, Czech Republic}

\email{posic@mccme.ru}

\begin{abstract}
 We present the definition of a dedualizing complex of bicomodules over
a pair of cocoherent coassociative coalgebras $\C$ and~$\D$.
 Given such a complex $\B^\bu$, we construct an equivalence between
the (bounded or unbounded) conventional, as well as absolute, derived
categories of the abelian categories of left comodules over $\C$ and
left contramodules over~$\D$.
 Furthermore, we spell out the definition of a dedualizing complex of
bisemimodules over a pair of semialgebras, and construct the related
equivalence between the conventional or absolute derived categories
of the abelian categories of semimodules and semicontramodules.
 Artinian, co-Noetherian, and cocoherent coalgebras are discussed as
a preliminary material.
\end{abstract}

\maketitle

\tableofcontents

\section{Introduction}
\medskip

\subsection{{}}
 In the classical homological algebra, one was not supposed to consider
unbounded derived categories; certainly not when working with
categories or functors of infinite homological dimension.
 Right derived functors were acting from bounded below derived
categories, while left derived functors were defined on bounded above
derived categories.
 Such derived functors were constructed using resolutions by complexes
of injective or projective objects.
 More generally, one would consider resolutions by complexes with
the terms adjusted to the particular functor in question, such as
flat modules or flasque sheaves.

 Everything changed after the watershed paper of
Spaltenstein~\cite{Spa}, which explained, following the idea of
Bernstein, how to work with unbounded complexes, particularly when
constructing derived functors of infinite homological dimension.
 The key innovation was to strengthen the conditions imposed on
resolutions and work with what came to be known as
the \emph{homotopy injective} or \emph{homotopy projective} complexes.
 Unlike in the classical homological algebra, these are not termwise
conditions: whether a complex is homotopy injective or homotopy
projective depends on the differential in the complex and not only on
its terms.

 The unbounded derived category of modules over an associative ring
turned out to be particularly well-behaved.
 In the subsequent work of Keller, Bernstein--Lunts, and
Hinich~\cite{Kel,BL,Hin}, the theory was extended to DG\+modules over
DG\+rings.
 The unbounded derived category of DG\+modules $\sD(A\modl)$ is
compactly generated, and it only depends on the quasi-isomorphism
class of a DG\+ring~$A$.
 One can generalize even further and replace an associative DG\+ring
$A$ with an $\mathrm{A}_\infty$\+algebra.

 The theory that grew out of Spaltenstein's paper became so hugely
popular that nowadays people use the homotopy injective and homotopy
projective resolutions even when they are not actually relevant.
 That is what was happening in the case of the MGM
(Matlis--Greenlees--May) duality/equivalence theory~\cite{PSY}.
 In fact, in the MGM theory one deals with derived functors of finite
homological dimension, and the use of complexes of adjusted objects,
similar to that of (complexes of) flasque or soft sheaves in
the computation of sheaf cohomology/derived direct images, is called
for~\cite{Pmgm}.

\subsection{{}}
 The next development, which came about a decade later, was that people
started to work with complexes viewed up to equivalence relations
more delicate than the conventional
quasi-isomorphism~\cite{Hin2,Lef,Kel2}.
 In other words, triangulated categories in which some, though not too
many, acyclic complexes survive as nonzero objects attracted
a certain interest.
 In addition to the constructions of compact generators~\cite{Jor,Kra},
one of manifestations of the phenomenon which the present author calls
the \emph{derived co-contra correspondence} was first noticed
in the paper~\cite{IK}.

 The present author's own ideas about the subject were published with
about a decade-long delay~\cite{Psemi,Pkoszul}.
 Developed originally in the context of derived nonhomogeneous Koszul
duality, they proceed from the observation that replacing
the conventional quasi-isomorphism of complexes with more delicate
equivalence relations is a natural alternative to strengthening
the conditions on resolutions when working with unbounded complexes.
 In the terminology going back to the classical paper~\cite{HMS}
(where \emph{two kinds of differential derived functors} were
introduced), this point of view came to be known as the distinction
between \emph{two kinds of derived categories}.

 In the derived categories of the first kind, complexes are considered
up to the conventional quasi-isomorphism (which does not depend on
the module structure on the complexes, but only on their underlying
complexes of abelian groups), which necessitates the use of homotopy
adjusted complexes as resolutions (meaning the conditions on resolving
complexes depending on the differentials and not only on
the underlying graded object structures).
 In the derived categories of the second kind, some acyclic complexes
survive as nonzero objects (and the equivalence relation on complexes
depends on their module structures and not only on the underlying
complexes of abelian groups), while the conditions on resolutions
do not depend on the differentials on them (but only on their
underlying graded objects).

 Another advantage of derived categories of the second kind is that
they are defined for \emph{curved} differential graded (CDG) structures
as well as for conventional differential graded structures~\cite{Pkoszul}.
 Hence the important role that such derived category constructions play,
in particular, in the theory of matrix factorizations~\cite{Or,EP,BDFIK}.

 The conventional derived category is the derived category of the first
kind.
 The two most important versions of derived categories of the second
kind are the \emph{coderived} and the \emph{contraderived} category.
 In well-behaved situations, the coderived category of (curved)
DG\+modules is equivalent to the homotopy category of (curved)
DG\+modules whose underlying graded modules are injective, while
the contraderived category of (curved) DG\+modules is equivalent
to the homotopy category of (curved) DG\+modules whose underlying
graded modules are projective.

\subsection{{}}
 It turned out that the conventional derived categories of
DG\+comodules over DG\+coalgebras (over a field) are not as
well-behaved as the derived categories of DG\+modules.
 The derived category of DG\+comodules over a DG\+coalgebra can change
when the DG\+coalgebra is replaced by a quasi-isomorphic
one~\cite{Kal}, \cite[Remark~2.4]{Pkoszul}.
 There are no obvious reasons why the derived category $\sD(\C\comodl)$
of DG\+comodules over a DG\+coalgebra $\C$ (or even complexes of
comodules over a coalgebra~$\C$) should be compactly generated, though
one can show that it is well-generated~\cite[Section~5.5]{Pkoszul}.

 Perhaps one is not supposed to consider the conventional derived
category of complexes of comodules (or DG\+comodules) at all.
 The notion that one should work with the derived categories of
modules and the coderived categories of comodules goes back
to~\cite{Lef,Kel2}.
 The monograph~\cite{Psemi} is based on the philosophy that one is
supposed to take the derived category of modules, the coderived
category of comodules, and the contraderived category of contramodules.

 In fact, for any curved DG\+coalgebra $\C$ over a field~$k$,
the coderived category $\sD^\co(\C\comodl)$ of left curved
DG\+comodules over $\C$ is compactly generated (by the bounded
derived category of $k$\+finite-dimensional CDG\+comodules).
 The coderived category of CDG\+comodules is also equivalent to
the homotopy category of CDG\+comodules with injective underlying
graded comodules.

 Furthermore, there is a natural equivalence between
the coderived category of left CDG\+comodules and
the contraderived category of left CDG\+contramodules
over~$\C$\, \cite[Section~5]{Pkoszul}:
\begin{equation} \label{co-contra-correspondence}
 \sD^\co(\C\comodl)\simeq\sD^\ctr(\C\contra).
\end{equation}
 The contraderived category of CDG\+contramodules is equivalent to
the homotopy category of CDG\+contramodules with projective underlying
graded contramodules.
 The triangulated equivalence~\eqref{co-contra-correspondence} is
a principal example of what is called the \emph{derived
comodule-contramodule correspondence} phenomenon
in~\cite{Psemi,Pkoszul}.

\subsection{{}}
 Several words about the \emph{contramodules} are due at this point.
 There are two abelian categories associated naturally with
an associative ring $A$: the left $A$\+modules and the right
$A$\+modules.
 In contrast, for a coalgebra $\C$ (say, over a field~$k$) there are
\emph{four} such abelian categories: the left and the right
\emph{$\C$\+comodules}, and the left and the right
\emph{$\C$\+contramodules}.
 The categories of comodules have exact functors of filtered inductive
limit and enough injective objects.
 The categories of contramodules have exact functors of infinite
product and enough projective objects.

 Let $\C^*$ denote the dual $k$\+vector space to $\C$, endowed with
its natural structure of a pro-finite-dimensional topological
$k$\+algebra.
 Then the $\C$\+comodules are the same thing as \emph{discrete}
$\C^*$\+modules, while the $\C$\+contramodules form an ``intermediate''
category between arbitrary $\C^*$\+modules and \emph{pseudo-compact}
(pro-finite-dimensional) topological $\C^*$\+modules.
 The latter form a category equivalent to the opposite category to
$\C$\+comodules.
 More precisely, there are natural forgetful functors
$$
 (\comodr\C)^\sop\lrarrow\C\contra\lrarrow\C^*\modl
$$
from the opposite category to right $\C$\+comodules to left
$\C$\+contramodules and to left $\C^*$\+modules (in addition
to the fully faithful functor $\C\comodl\rarrow\C^*\modl$ identifying
left $\C$\+comodules with discrete left $\C^*$\+modules).

 In other words, $\C$\+contramodules can be viewed as a species of
``complete'' (as opposed to discrete) $\C^*$\+modules.
 Nevertheless, contramodules carry no underlying topologies on them.
 Instead, they are discrete $k$\+vector spaces endowed with
\emph{infinite summation operations} with the coefficients in~$\C^*$
\cite{Prev}.

\subsection{{}}
 As we have already mentioned, the coderived categories of comodules
and the contraderived categories of contramodules are better behaved
than the conventional (unbounded) derived categories of comodules
or contramodules.
 In other words, considering derived categories of the first kind
along the ring variables and derived categories of the second kind
along the coalgebra variables produces the better behaved
triangulated categories~\cite{Psemi}.

 Still, there is something to be said about the conventional
derived categories of DG\+comodules and DG\+contramodules, too.
 The following results can be found in~\cite[Theorem~2.4 and
Section~5.5]{Pkoszul} (for part~(d), one has to look into
the postpublication \texttt{arXiv} version of~\cite{Pkoszul}).

\begin{thm}
 Let\/ $\C$ be a DG\+coalgebra over a field~$k$.  Then \par
\textup{(a)} the Verdier quotient functor\/ $\sD^\ctr(\C\contra)
\rarrow\sD(\C\contra)$ has a (fully faithful) left adjoint functor\/
$\sD(\C\contra)\rarrow\sD^\ctr(\C\contra)$; \par
\textup{(b)} the essential image of the triangulated functor\/
$\sD(\C\contra)\rarrow\sD^\ctr(\C\contra)$ is the minimal full
triangulated subcategory in\/ $\sD^\ctr(\C\contra)$ containing
the left DG\+contramodule\/ $\Hom_k(\C,k)$ over\/ $\C$ and closed
under infinite direct sums; \par
\textup{(c)} the Verdier quotient functor\/ $\sD^\co(\C\comodl)
\rarrow\sD(\C\comodl)$ has a (fully faithful) right adjoint functor\/
$\sD(\C\comodl)\rarrow\sD^\co(\C\comodl)$; \par
\textup{(d)} assuming Vop\v enka's principle in set theory,
the essential image of the triangulated functor\/
$\sD(\C\comodl)\rarrow\sD^\co(\C\comodl)$ is the minimal full
triangulated subcategory in\/ $\sD^\co(\C\comodl)$ containing
the left DG\+comodule\/ $\C$ over\/ $\C$ and closed under
infinite products.
\end{thm}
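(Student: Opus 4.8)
The plan is to prove (a)--(b) for contramodules and then obtain (c)--(d) by the formally dual argument, so I concentrate on the contramodule side. Two inputs from~\cite{Pkoszul} are used freely: that $\sD^\ctr(\C\contra)$ and $\sD^\co(\C\comodl)$ are compactly generated triangulated categories, and that every contraacyclic DG\+contramodule (resp.\ coacyclic DG\+comodule) is acyclic, the latter because $\C$\+contramodules have exact infinite products and $\C$\+comodules have exact infinite coproducts. For~(a), since contraacyclicity implies acyclicity, $\sD(\C\contra)$ is the Verdier quotient of $\sD^\ctr(\C\contra)$ by the thick subcategory $\mathsf S$ of DG\+contramodules with vanishing cohomology; and because the class of contraacyclic DG\+contramodules is closed under products while products are exact in $\C\contra$, infinite direct products in $\sD^\ctr(\C\contra)$ and in $\sD(\C\contra)$ are both represented by the direct products of the underlying DG\+contramodules, so the Verdier quotient functor $Q\:\sD^\ctr(\C\contra)\rarrow\sD(\C\contra)$ preserves products. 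Compact generation of $\sD^\ctr(\C\contra)$ gives Brown representability for its opposite category (Neeman), hence a left adjoint $L$ to the product-preserving functor $Q$; by the general theory of Verdier quotients the existence of $L$ forces it to be fully faithful, with $QL\cong\mathrm{id}$ and essential image the left orthogonal subcategory $^{\perp}\mathsf S$.

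For~(b) I would pin down $^{\perp}\mathsf S$, which is a localizing subcategory. The key computation is that $\Hom_k(\C,k)$ --- projective as a graded $\C$\+contramodule, and corepresenting the forgetful functor to graded $k$\+vector spaces --- satisfies $\Hom_{\sD^\ctr(\C\contra)}(\Hom_k(\C,k),C^\bu)\cong H^0(C^\bu)$ for every DG\+contramodule $C^\bu$. This shows at once that $\Hom_k(\C,k)\in{}^{\perp}\mathsf S$, so the minimal localizing subcategory $\mathsf L=\langle\Hom_k(\C,k)\rangle$ is contained in $^{\perp}\mathsf S$, and that $\mathsf L^{\perp}=\mathsf S$ (as $Y\in\mathsf L^{\perp}$ iff $H^{n}(Y)=0$ for all $n$). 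Since $\mathsf L$ is generated by a single object of a compactly generated category, it is coreflective (Neeman); given $X\in{}^{\perp}\mathsf S$, the coreflection triangle $X_{\mathsf L}\rarrow X\rarrow X'\rarrow X_{\mathsf L}[1]$ has $X_{\mathsf L}\in\mathsf L\subseteq{}^{\perp}\mathsf S$, so $X'\in{}^{\perp}\mathsf S$, while $X'\in\mathsf L^{\perp}=\mathsf S$ as well; thus $X'\in{}^{\perp}\mathsf S\cap\mathsf S=0$ and $X\cong X_{\mathsf L}\in\mathsf L$. Hence $^{\perp}\mathsf S=\mathsf L$, which is~(b).

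Parts~(c) and~(d) are obtained by transposing: comodules for contramodules, coproducts for products, the coderived for the contraderived category, right adjoints for left adjoints, and the injective cofree DG\+comodule $\C$ --- which corepresents the appropriate functor so that $\Hom_{\sD^\co(\C\comodl)}(M^\bu,\C)\cong H^0(M^\bu)^{*}$ --- in place of $\Hom_k(\C,k)$. So $Q\:\sD^\co(\C\comodl)\rarrow\sD(\C\comodl)$ preserves coproducts and, $\sD^\co(\C\comodl)$ being compactly generated, has a fully faithful right adjoint $G$ with essential image the colocalizing (product-closed) subcategory $\mathsf S^{\perp}$; and $\langle\C\rangle\subseteq\mathsf S^{\perp}$, together with $^{\perp}\langle\C\rangle=\mathsf S$, follows exactly as in~(b).

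The step I expect to be the genuine obstacle --- and the source of the set-theoretic hypothesis --- is the reflectivity of the \emph{colocalizing} subcategory $\langle\C\rangle$ in part~(d): the dual of Neeman's theorem, ``a colocalizing subcategory generated by a set of objects is reflective'', is not provable in \textup{ZFC}, but it does hold under Vop\v enka's principle (every colocalizing subcategory of a well-generated triangulated category is then reflective). Granting this, any $X\in\mathsf S^{\perp}$ fits in a reflection triangle $X'\rarrow X\rarrow X_{\langle\C\rangle}\rarrow X'[1]$ with $X_{\langle\C\rangle}\in\langle\C\rangle$ and $X'\in{}^{\perp}\langle\C\rangle=\mathsf S$; since $\mathsf S^{\perp}$ is thick, $X'\in\mathsf S^{\perp}\cap\mathsf S=0$, so $X\cong X_{\langle\C\rangle}\in\langle\C\rangle$, identifying $\mathsf S^{\perp}$ with $\langle\C\rangle$ and finishing~(d). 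Everything outside this one reflectivity statement is formal, resting only on compact generation, exactness of products/coproducts in the two abelian categories, and the explicit description of maps out of the free contramodule (respectively into the cofree comodule).
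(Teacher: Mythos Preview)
The paper does not supply its own proof of this theorem; it merely attributes the result to \cite[Theorem~2.4 and Section~5.5]{Pkoszul} (with part~(d) in the postpublication arXiv version). So there is no in-paper argument to compare your proposal against.

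Your outline is essentially the standard Brown-representability argument and is correct in substance; it is in the same spirit as what \cite{Pkoszul} does. The computation $\Hom_{\sD^\ctr(\C\contra)}(\Hom_k(\C,k),C^\bu)\cong H^0(C^\bu)$ is justified because $\Hom_k(\C,k)$ is graded-free, hence orthogonal to all contraacyclic DG\+contramodules, so Hom in $\sD^\ctr$ equals Hom in the homotopy category; the dual computation $\Hom_{\sD^\co(\C\comodl)}(M^\bu,\C)\cong H^0(M^\bu)^*$ works for the same reason on the comodule side. Your use of Neeman's theorems (Brown representability for the dual in compactly generated categories for~(a); coreflectivity of set-generated localizing subcategories for~(b)) is appropriate.

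The one place to be a bit more careful is the precise form of the Vop\v enka input in~(d). The statement ``every colocalizing subcategory of a well-generated triangulated category is reflective under Vop\v enka'' is not quite a standard off-the-shelf theorem; what is available (and what \cite{Pkoszul} effectively uses) is that $\sD^\co(\C\comodl)$ underlies a combinatorial model category, and under Vop\v enka's principle every full subcategory of a locally presentable category closed under limits is reflective, which then transfers to the homotopy category. This yields exactly the reflection triangle you need. So the argument goes through, but you should route the appeal to Vop\v enka through the model-categorical statement rather than assert a purely triangulated version directly.
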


 It should be added that the triangulated
equivalence~\eqref{co-contra-correspondence} takes the DG\+comodule
$\C$ over $\C$ to the DG\+contramodule $\Hom_k(\C,k)$ over~$\C$.
 Thus, assuming Vop\v enka's principle, the derived categories
$\sD(\C\comodl)$ and $\sD(\C\contra)$ are related as two full
triangulated subcategories in the same triangulated
category~\eqref{co-contra-correspondence}, generated by the same object
in this category; but one of them is generated using shifts, cones,
and infinite products, while the other one is generated using shifts,
cones, and infinite direct sums.
 Both the (set-indexed) direct sums and products of arbitrary objects
exist in the compactly generated triangulated
category~\eqref{co-contra-correspondence}\, (but the object
$\C\longleftrightarrow\Hom_k(\C,k)$ is not compact in
this category). 

\subsection{{}}
 The aim of this paper is to demonstrate a set of (admittedly, rather
restrictive) assumptions and additional data allowing to construct
an equivalence between the derived category of complexes of comodules
over a coalgebra $\C$ and the derived category of complexes of
contramodules over another coalgebra~$\D$,
\begin{equation} \label{mgm-equivalence}
 \sD^\st(\C\comodl)\simeq\sD^\st(\D\contra).
\end{equation}
 Here the symbol $\st$ means that both the bounded and unbounded
conventional derived categories are allowed, i.~e., one can have
$\st=\b$, $+$, $-$, or~$\varnothing$.

 Moreover, the triangulated equivalence~\eqref{mgm-equivalence}
also holds for the \emph{absolute derived categories} with the symbols
$\st=\abs+$, $\abs-$, or~$\abs$, which are versions of the construction
of derived categories of the second kind introducted in~\cite{Pkoszul}
and~\cite[Appendix~A]{Pcosh}.
 Unlike in~\eqref{co-contra-correspondence}, though, the derived
category symbol must be the same in the left and the
right-hand side of the equivalence~\eqref{mgm-equivalence}.

 The triangulated equivalence~\eqref{mgm-equivalence}, connecting
the conventional derived categories of comodules and contramodules,
is a species of what can be called the ``na\"\i ve derived co-contra
correspondence''.
 In the present author's work, it first appeared in
the algebro-geometric setting as an equivalence between the derived
categories of quasi-coherent sheaves and contraherent
cosheaves over a quasi-compact semi-separated
scheme~\cite[Section~4.6]{Pcosh} (or alternatively, over
a Noetherian scheme of finite Krull
dimension~\cite[Theorem~5.8.1]{Pcosh}).
 In the subsequent papers~\cite{Pmgm,PMat}, the same principle was
applied in order to formulate the \emph{MGM duality/equivalence}
and the \emph{triangulated Matlis equivalence}.

\subsection{{}}
 The equivalence of categories~\eqref{mgm-equivalence} can be called
the ``MGM duality for coalgebras''.
 A bit of history of the MGM duality is worth recalling in this
connection.
 The three-letter abbreviation stands for
\emph{Matlis--Greenlees--May}~\cite{Mat2,GM}.
 The related chain of results can be further traced to the seminal
paper of Harrison~\cite{Har}, where certain equivalences of additive
subcategories in the category of abelian groups were constructed.
 Matlis extended these to equivalences between additive subcategories
in the category of modules over an arbitrary commutative
domain~\cite{Mat1}.

 In the paper~\cite{Mat2}, which came more than a decade later,
Matlis constructs an equivalence between certain additive subcategories
in the category of modules over a commutative ring $R$ related to
an ideal $I\subset R$ generated by a regular sequence.
 Greenlees and May~\cite{GM} initiated the study of the derived
functors of $I$\+adic completion for an arbitrary finitely generated
ideal $I$ in a commutative ring~$R$.
 Dwyer and Greenlees~\cite{DG} formulated the theory in the form of
a triangulated equivalence between two full subcategories (of what
we would now call the ``$I$\+torsion'' and ``$I$\+complete'' complexes)
in the derived category $\sD(R\modl)$ of modules over a commutative
ring $R$ with a finitely generated ideal $I\subset R$.
 Porta, Shaul, and Yekutieli~\cite{PSY} studied the case of
a \emph{weakly proregular} finitely generated ideal~$I$.

 The present author's paper~\cite{Pmgm}, which formulated the theory
in its state-of-the-art form, emphasized and discussed the role of
what it called a \emph{dedualizing complex} of $I$\+torsion
$R$\+modules in the MGM duality theory.
 It also demonstrated the central role of the abelian category of
\emph{$I$\+contramodule $R$\+modules}, on par with the much more
familiar dual-analogous abelian category of \emph{$I$\+torsion
$R$\+modules}, in the MGM duality.

\subsection{{}}
 The main results of the MGM duality theory, as formulated
in~\cite{Pmgm}, are the following ones.
 Given a finitely generated ideal $I$ in a commutative ring $R$,
denote by $R\modl_{I\tors}$ and $R\modl_{I\ctra}\subset R\modl$
the abelian subcategories of $I$\+torsion and $I$\+contramodule
$R$\+modules.
 (See~\cite{Pcta} for an introductory discussion of these
subcategories.)
 Then for every conventional derived category symbol $\st=\b$, $+$,
$-$, or~$\varnothing$ there is a natural triangulated equivalence
\begin{equation}
 \sD^\st_{I\tors}(R\modl)\simeq\sD^\st_{I\ctra}(R\modl)
\end{equation}
between the full subcategory of complexes of $R$\+modules with
$I$\+torsion cohomology modules and the full subcategory of
complexes of $R$\+modules with $I$\+contramodule cohomology
modules in $\sD^\st(R\modl)$.

 Furthermore, assuming that the ideal $I\subset R$ is weakly proregular
(which always holds, e.~g., when the ring $R$ is Noetherian),
for every derived category symbol $\st=\b$, $+$, $-$, $\varnothing$,
$\abs+$, $\abs-$, or~$\abs$, there is a natural equivalence between
the derived categories of the abelian categories
$R\modl_{I\tors}$ and $R\modl_{I\ctra}$,
\begin{equation} \label{commutative-algebra-mgm-equivalence}
 \sD^\st(R\modl_{I\tors})\simeq\sD^\st(R\modl_{I\ctra}).
\end{equation}
 The triangulated equivalence~\eqref{mgm-equivalence} is
a noncocommutative coalgebra version of the triangulated
equivalence~\eqref{commutative-algebra-mgm-equivalence}.

 When $R$ is a finitely generated algebra over an algebraically
closed field~$k$ and $I$ is a maximal ideal in~$R$,
the equivalence~\eqref{commutative-algebra-mgm-equivalence} becomes
a particular case of the equivalence~\eqref{mgm-equivalence}.
 (See the discussion in~\cite[Section~0.10]{Pmgm} and generally in
the introduction to~\cite{Pmgm}, where the conceptual importance of
coalgebra-related considerations in the MGM duality theory is also
emphasized.)

\subsection{{}}
 The triangulated equivalence~\eqref{mgm-equivalence} depends on
an additional piece of data called a \emph{dedualizing complex of\/
$\C$\+$\D$\+bicomodules}~$\B^\bu$.
 The definition of a dedualizing complex of bicomodules is dual to
that of a dualizing complex of bimodules for a pair of associative
rings~\cite{Yek,YZ,Miy,CFH,Pfp}.

 A detailed discussion of the related philosophy can be found
in the introduction to~\cite{Pmgm}.
 To recall it very briefly here, let us mention that an associative
ring $A$ is itself a dedualizing complex of $A$\+$A$\+bimodules.
 Given a \emph{dualizing} complex of $A$\+$B$\+bimodules $D^\bu$ for
a pair of associative rings $A$ and $B$, one constructs a triangulated
equivalence between the coderived and the contraderived category
of modules
\begin{equation} \label{covariant-serre-grothendieck}
 \sD^\co(A\modl)\simeq\sD^\ctr(B\modl),
\end{equation}
which can be called the \emph{covariant Serre--Grothendieck
duality}~\cite{Pfp}.

 Conversely, a coalgebra $\C$ over a field~$k$ is itself
a \emph{dualizing} complex of $\C$\+$\C$\+bico\-modules; hence
the triangulated equivalence~\eqref{co-contra-correspondence}.
 The datum of a \emph{dedualizing} complex of $\C$\+$\D$\+bicomodules
allows to construct a triangulated equivalence~\eqref{mgm-equivalence}.

\subsection{{}}
 Furthermore, a (semiassociative and semiunital) \emph{semialgebra}
$\S$ over a coalgebra $\C$ over a field~$k$ is an algebra object in
the (noncommutative, but associative and unital) tensor category of
bicomodules over $\C$ with respect to the operation of \emph{cotensor
product}~$\oc_\C$.
 Just as for a coalgebra $\C$, there are \emph{four} module categories
naturally assigned to a semialgebra $\S$: the left and right
\emph{$\S$\+semimodules}, and the left and right
\emph{$\S$\+semicontramodules}.
 The category of left $\S$\+semimodules $\S\simodl$ is abelian and
the forgetful functor $\S\simodl\rarrow\C\comodl$ is exact if and
only if $\S$ is an injective right $\C$\+comodule.
 The category of left $\S$\+semicontramodules $\S\sicntr$ is abelian
and the forgetful functor $\S\sicntr\rarrow\C\contra$ is exact if and
only if $\S$ is an injective left $\C$\+comodule.

 For any semialgebra $\S$ over a coalgebra $\C$ such that $\S$ is
an injective left $\C$\+comodule and an injective right $\C$\+comodule,
there is a natural equivalence between
the \emph{semiderived categories} of left $\S$\+semimodules and
left $\S$\+semicontramodules~\cite[Sections~0.3.7 and~6.3]{Psemi}:
\begin{equation} \label{semico-semicontra}
 \sD^\si(\S\simodl)\simeq\sD^\si(\S\sicntr).
\end{equation}
 The words ``semiderived category'' actually mean two dual
constructions rather than one: the semiderived category of semimodules 
is what could be more precisely called their \emph{semicoderived}
category, while the semiderived category of semicontramodules could be 
called the \emph{semicontraderived} category.
 These are certain mixtures of the constructions of co- or
contraderived categories (taken ``along~$\C$'') and the conventional 
derived category (taken ``in the direction of $\S$ relative to~$\C$'').

\subsection{{}}
 Now let $\S$ be a semialgebra over a coalgebra $\C$ and $\T$ be
a semialgebra over a coalgebra $\D$, both over the same field~$k$.
 Let $\B^\bu$ be a dedualizing complex of $\C$\+$\D$\+bicomodules.
 In this paper we show that, given a certain further piece of data
called \emph{a dedualizing complex of\/ $\S$\+$\T$\+bisemimodules}
$\bB^\bu$, one can construct a triangulated equivalence between
the conventional derived category of left $\S$\+semimodules and
the conventional derived category of left $\T$\+semicontramodules,
\begin{equation} \label{semialg-conventional-derived}
 \sD(\S\simodl)\simeq\sD(\T\sicntr).
\end{equation}
 Moreover, there are triangulated equivalences
\begin{equation} \label{semialg-mgm-equivalence}
 \sD^\st(\S\simodl)\simeq\sD^\st(\T\sicntr)
\end{equation}
for all the conventional or absolute, bounded or unbounded derived
category symbols $\st=\b$, $+$, $-$, $\varnothing$, $\abs+$,
$\abs-$, or~$\abs$.
 These results can be called the \emph{MGM duality/equivalence for
semialgebras}.

 The definition of a dedualizing complex of bisemimodules is dual to
that of a dualizing complex of bicomodules for a pair of corings over
associative rings~\cite[Section~B.4]{Pcosh}.

\subsection{{}}
 The situation simplifies when the coalgebra $\C$ has finite
homological dimension (i.~e., the abelian category $\C\comodl$ has 
finite homological dimension or, which is equivalent, the abelian 
category $\C\contra$ has finite homological dimension).

 In this case, there is no difference between the semiderived category
$\sD^\si(\S\simodl)$ and the conventional derived category
$\sD(\S\simodl)$, and also no difference between the semiderived
category $\sD^\si(\S\sicntr)$ and the conventional derived category
$\sD(\S\sicntr)$,
$$
 \sD^\si(\S\simodl)=\sD(\S\simodl)\quad\text{and}\quad
 \sD^\si(\S\sicntr)=\sD(\S\sicntr).
$$
 The semialgebra $\S$ itself can be used as a dedualizing complex of
$\S$\+$\S$\+bisemimodules in this case, so~\eqref{semico-semicontra}
becomes an instance of~\eqref{semialg-conventional-derived}
for $\C=\D$ and $\S=\T$.

 In particular, one finds oneself in this situation in the theory of
smooth duality for a $p$\+adic Lie group with coefficients in a field
of characteristic~$p$\,~\cite{Psm}.

\subsection{{}}
 Finally, let us say a few words about the finiteness conditions on
coalgebras, comodules, and contramodules.
 One of the peculiarities of coalgebras is the difference between
the classes of Artinian and co-Noetherian coalgebras or comodules.
 Any Artinian comodule is co-Noetherian, but the converse is not
generally true.
 For a counterexample, one can consider the cosemisimple coalgebra $\C$
that is the direct sum of an infinite number of copies of
the coalgebra~$k$ over~$k$.
 Then $\C$ is a co-Noetherian $\C$\+comodule (i.~e., all its quotient
comodules are finitely cogenerated), but it is not an Artinian object
of the abelian category $\C\comodl$.

 The finiteness conditions on coalgebras were, of course, traditionally
discussed in the language of comodules~\cite{WW,GTNT}.
 Some of the dual-analogous contramodule conditions lead to equivalent
conditions on the coalgebra.
 In particular, any co-Artinian contramodule is Noetherian, but
the converse is not necessarily true.
 A coalgebra is called \emph{right Artinian} if any finitely
cogenerated right comodule over it is Artinian; this is equivalent to
any finitely generated left contramodule over it being co-Artinian.
 A coalgebra is \emph{right cocoherent} if any finitely cogenerated
quotient comodule of a finitely copresented right comodule over it is
finitely copresented; this is equivalent to any finitely generated
subcontramodule of a finitely presented left contramodule being
finitely presented.

\subsection{{}}
 The finiteness conditions on coalgebras, comodules, and contramodules
are discussed in Section~\ref{coalgebra-finiteness} of
the present paper.
 The definition of a dedualizing complex for a pair of coalgebras is
presented and the triangulated equivalence~\eqref{mgm-equivalence} is
constructed in Section~\ref{dedualizing-bicomodules}.
 The definition of a dedualizing complex for a pair of semialgebras is
spelled out and the triangulated
equivalence~\eqref{semialg-mgm-equivalence} is
constructed in Section~\ref{dedualizing-bisemimodules}.

 We refer to the overview paper~\cite{Prev} and the references therein
for detailed discusions of various kinds of contramodules, including
first of all contramodules over coassociative coalgebras over a field.
 Semialgebras, semimodules, and semicontramodules are discussed
in~\cite[Sections~2.6 and~3.5]{Prev}.
 The structure theory of contramodules over a coalgebra over a field
was studied in~\cite[Appendix~A]{Psemi}.
 The definitions of exotic derived categories used in this paper are
introduced in~\cite[Appendix~A]{Pcosh}; they are also briefly recalled
in~\cite[Appendix~A]{Pmgm}.
 Further discussions can be found in the introductions to~\cite{Pmgm}
and~\cite{Pfp}, and in the references therein.

\medskip
\noindent\textbf{Acknowledgement.}
 The material in
Sections~\ref{coalgebra-finiteness}\+-\ref{dedualizing-bicomodules}
of this paper was originally developed as a part of
the paper~\cite{Pmgm}, but was later excluded from~\cite{Pmgm}
(to make~\cite{Pmgm} a work in commutative algebra) and moved to
this separate paper.
 The author was supported in part by a fellowship from the Lady
Davis Foundation at the Technion while working on~\cite{Pmgm}
(including much of the material now forming the present paper).
 The author's research was supported by the Israel Science Foundation
grant~\#\,446/15 and the Grant Agency of the Czech Republic
under the grant P201/12/G028 while preparing the paper in its
present form.

\Section{Coalgebras with Finiteness Conditions}
\label{coalgebra-finiteness}

 This section contains a discussion of Artinian, co-Noetherian, and
cocoherent coalgebras.
 Many of the results below are certainly not new; we present them
here for the sake of completeness of the exposition.

 We refer to the book~\cite{Swe} and the survey paper~\cite{Prev}
for the definitions of coassociative coalgebras over fields, comodules
and contramodules over them, and the related basic concepts.
 A discussion of \emph{cosemisimple} and \emph{conilpotent} coalgebras
can be found in~\cite[Sections~9.0\+-1]{Swe} and (with a view towards
contramodules and the terminology similar to the one in this paper)
in~\cite[Appendix~A]{Psemi}.

 Let $\C$ be a coassociative coalgebra (with counit) over a field~$k$.
 For any $k$\+vector space $V$ the left $\C$\+comodule $\C\ot_kV$ is
called the \emph{cofree left\/ $\C$\+comodule cogenerated by\/~$V$}.
 For any left $\C$\+comodule $\L$, there is a natural isomorphism
$$
 \Hom_\C(\L\;\C\ot_kV)\simeq\Hom_k(\L,V),
$$
where for any two left $\C$\+comodules $\L$ and $\M$ we denote by
$\Hom_\C(\L,\M)$ the $k$\+vector space of all morphisms $\L\rarrow\M$
in the abelian category of left $\C$\+comodules $\C\comodl$.
 Hence cofree $\C$\+comodules are injective objects in $\C\comodl$.
 Cofree $\C$\+comodules are sufficiently many, so any injective
$\C$\+comodule is a direct summand of a cofree one.
 In particular, the left $\C$\+comodule $\C$ is called
the \emph{cofree\/ $\C$\+comodule with one cogenerator}, and finite
direct sums of copies of $\C$ are the \emph{finitely cogenerated
cofree\/ $\C$\+comodules}.

 A coassociative coalgebra is called \emph{cosimple} if it has no
nonzero proper subcoalgebras.
 The cosimple $k$\+coalgebras are precisely the dual coalgebras to
simple finite-dimensional $k$\+algebras.
 A coassociative coalgebra $\E$ is called \emph{cosemisimple} if it is
a direct sum of cosimple coalgebras, or equivalently, if the category
of left $\E$\+comodules is semisimple, or if the category of right
$\E$\+comodules is semisimple.

 A coassociative coalgebra without counit $\C'$ is called
\emph{conilpotent} if for any element $c'\in\C'$ there exists
an integer $n\ge1$ such that $c'$~is annihilated by the iterated
coaction map $\C'\rarrow\C'{}^{\ot n+1}$.
 Any coassociative coalgebra $\C$ has a unique maximal cosemisimple
subcoalgebra $\C^\ss\subset\C$, which can be also defined as
the (direct) sum of all cosimple subcoalgebras in~$\C$, or as
the minimal subcoalgebra $\E\subset\C$ for which the quotient coalgebra
without counit $\C/\E$ is conilpotent~\cite[Sections~9.0\+-1]{Swe}.
 
 For any subcoalgebra $\E\subset\C$ and any left $\C$\+comodule $\M$,
we denote by ${}_\E\M$ the maximal $\C$\+subcomodule in $\M$ whose
$\C$\+comodule structure comes from an $\E$\+comodule structure.
 In other words, ${}_\E\M\subset\M$ is the full preimage of
the subspace $\E\ot_k\M\subset\C\ot_k\M$ under the left coaction map
$\M\rarrow\C\ot_k\M$.
 The following assertion is a dual version of Nakayama's lemma for
comodules.

\begin{lem} \label{comodule-nakayama}
 Let\/ $\E\subset\C$ be a subcoalgebra such that the quotient
coalgebra without counit\/ $\C/\E$ is conilpotent (i.~e., $\E$
contains the subcoalgebra\/ $\C^\ss\subset\C$).
 Then the subcomodule\/ ${}_\E\M$ is nonzero for any nonzero left\/
$\C$\+comodule\/~$\M$.
\end{lem}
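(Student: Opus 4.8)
The plan is to reduce to the case of a finite-dimensional comodule $\M$ and then dualize, turning the assertion into the classical fact that a nilpotent ideal acting on a nonzero module over a ring always has a nonzero submodule annihilated by it (this is the sense in which the lemma is "dual to Nakayama").

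First I would pick a nonzero element $m\in\M$ and let $\M_0\subseteq\M$ be the $\C$\+subcomodule it generates. Since every comodule is the union of its finite-dimensional subcomodules, $\M_0$ is finite-dimensional and nonzero, and directly from the definition of ${}_\E(-)$ as the preimage of $\E\ot_k(-)$ under the coaction one has ${}_\E\M_0\subseteq{}_\E\M$; so it suffices to treat $\M_0$, i.e.\ we may assume $\M$ is finite-dimensional. Then $\M$ is a comodule over some finite-dimensional subcoalgebra $\C_0\subseteq\C$ (the coefficient coalgebra of $\M$, through which the $\C$\+coaction factors), and again from the definition it follows that ${}_\E\M={}_{\E\cap\C_0}\M$. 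Moreover $\C_0^\ss\subseteq\C^\ss\cap\C_0\subseteq\E\cap\C_0$, so the quotient coalgebra without counit $\C_0/(\E\cap\C_0)$ is a quotient of the conilpotent coalgebra $\C_0/\C_0^\ss$, hence is itself conilpotent; being finite-dimensional and conilpotent, its iterated coaction maps $\C_0/(\E\cap\C_0)\rarrow(\C_0/(\E\cap\C_0))^{\ot n+1}$ vanish for all large $n$, because their kernels form an increasing chain of subspaces exhausting the finite-dimensional space $\C_0/(\E\cap\C_0)$.

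Next I would pass to the dual finite-dimensional algebra $\C_0^*$, on which $\M$ becomes a nonzero module via the convolution action. The orthogonal complement $J=(\E\cap\C_0)^\perp\subseteq\C_0^*$ is a two-sided ideal, canonically identified as a non-unital algebra with $(\C_0/(\E\cap\C_0))^*$; by the previous paragraph and the standard duality between iterated comultiplication in a finite-dimensional coalgebra and iterated multiplication in its dual, $J$ is a nilpotent ideal. Unwinding the definitions, an element $x\in\M$ lies in ${}_{\E\cap\C_0}\M$ if and only if $Jx=0$. Since $J$ is nilpotent and $\M\neq0$, there is a least integer $j\ge1$ with $J^j\M=0$; then $J^{j-1}\M$ is a nonzero subspace of $\M$ annihilated by $J$, so $0\neq J^{j-1}\M\subseteq{}_{\E\cap\C_0}\M={}_\E\M$, which finishes the proof.

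The only points requiring care are bookkeeping: the local finiteness of comodules and the existence of the finite-dimensional coefficient coalgebra $\C_0$; the compatibility of ${}_\E(-)$ with restriction of scalars to $\C_0$; and the usual dictionary between finite-dimensional coalgebras and algebras (subcoalgebra $\leftrightarrow$ two-sided ideal of the dual, conilpotent coalgebra without counit $\leftrightarrow$ nilpotent algebra without unit, left comodule $\leftrightarrow$ module over the dual algebra). I do not expect any genuine obstacle here. Alternatively one can avoid dualization altogether and argue directly inside comodules: the vanishing of the iterated coaction $\M\rarrow(\C_0/(\E\cap\C_0))^{\ot N}\ot_k\M$ for large $N$ says that the $(\C/\E)$\+coaction on $\M$ is nilpotent, so taking $m\neq0$ and the minimal $n\ge1$ for which the $n$\+fold coaction annihilates $m$, the comodule elements occurring in the (nonzero) $(n-1)$\+fold coaction of $m$ lie in ${}_\E\M$ by coassociativity.
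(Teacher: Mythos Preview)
Your proof is correct, but it takes a considerably longer route than the paper's. The paper argues in two lines, entirely on the comodule side, without any reduction to the finite-dimensional case and without dualizing. Writing $\bar\rho\colon\M\rarrow(\C/\E)\ot_k\M$ for the composition of the coaction with the projection, coassociativity gives $\bar\rho^{(n)}(x)=\sum\bar\Delta^{(n-1)}(\overline{c_i})\ot m_i$ whenever $\rho(x)=\sum c_i\ot m_i$; conilpotency of $\C/\E$ then kills each summand for large~$n$, so every $x\in\M$ lies in $\ker\bar\rho^{(n)}$ for some~$n$. Since tensoring over a field preserves injections, injectivity of $\bar\rho$ would force all iterates $\bar\rho^{(n)}$ to be injective, which is impossible for $\M\ne0$. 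Hence $\ker\bar\rho={}_\E\M\ne0$.

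Your ``alternative'' remark at the very end is essentially this argument, and it is the one you should lead with: it makes the finite-dimensional reduction, the coefficient coalgebra $\C_0$, and the whole dualization dictionary unnecessary. What your main approach buys is a literal translation into the familiar module-theoretic Nakayama (a nilpotent ideal annihilates some nonzero vector), which is pedagogically transparent; what the paper's approach buys is brevity and the avoidance of any finiteness bookkeeping---it works uniformly for arbitrary~$\M$ without ever invoking local finiteness of comodules.
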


\begin{proof}
 It follows from the conilpotency condition that for every element
$x\in\M$ there exists an integer $n\ge1$ such that $x$~is annihilated
by the iterated coaction map $\M\rarrow(\C/\E)^{\ot n}\ot_k\M$.
 Hence the coaction map $\M\rarrow\C/\E\ot_k\M$ cannot be injective
for a nonzero left $\C$\+comodule~$\M$.
\end{proof}

 A left $\C$\+comodule is said to be
\emph{finitely cogenerated}~\cite[Example~1.2]{Tak} if it can be
embedded as a subcomodule into a finitely cogenerated cofree
left $\C$\+comodule.
 Obviously, any subcomodule of a finitely cogenerated cofree
$\C$\+comodule is finitely cogenerated.
 One easily checks that the class of finitely cogenerated left
$\C$\+comodules is closed under extensions in $\C\comodl$.

\begin{lem} \label{subcoalgebra-finitely-cogenerated}
\textup{(a)} For any finitely cogenerated left\/ $\C$\+comodule $\L$
and any subcoalgebra\/ $\E\subset\C$, the left\/ $\E$\+comodule\/
${}_\E\L$ is finitely cogenerated. \par
\textup{(b)} The cofree left\/ $\C$\+comodule\/ $\C\ot_kV$ with
an infinite-dimensional vector space of cogenerators $V$ over
a nonzero coalgebra\/ $\C$ is not finitely cogenerated. \par
\textup{(c)} For any subcoalgebra\/ $\E\subset\C$, a left\/
$\E$\+comodule\/ $\L$ is finitely cogenerated if and only if it
is finitely cogenerated as a left\/ $\C$\+comodule. \par
\textup{(d)} Let\/ $\E\subset\C$ be a subcoalgebra such that
the quotient coalgebra without counit\/ $\C/\E$ is conilpotent.
 Then a left\/ $\C$\+comodule $\L$ is finitely cogenerated if and only
if the left $\E$\+comodule\/ ${}_\E\L$ is finitely cogenerated. \par
\textup{(e)} A left\/ $\C$\+comodule\/ $\L$ is finitely cogenerated
if and only if the left\/ $\E$\+comodule\/ ${}_\E\L$ for every cosimple
subcoalgebra\/ $\E\subset\C$ is a finite direct sum of copies of
the irreducible left\/ $\E$\+comodule with the multiplicity of
the irreducible left\/ $\E$\+comodule in\/ ${}_\E\L$ divided by its
multiplicity in the left\/ $\E$\+comodule\/ $\E$ bounded by a single
constant uniformly over all the cosimple subcoalgebras\/ $\E\subset\C$.
\end{lem}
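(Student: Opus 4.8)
\medskip
\noindent\textbf{Proof plan.}
The plan is to reduce all five assertions to two elementary inputs: the behaviour of the functor $\M\mapsto{}_\E\M$ on cofree comodules, and the semisimplicity of the comodule category over a cosimple coalgebra. To begin, I would record that ${}_\E\C=\E$ as a left $\E$\+comodule: if $c\in\C$ satisfies $\Delta(c)\in\E\ot_k\C$, then $c=(\mathrm{id}\ot\varepsilon)\Delta(c)\in\E$, and the reverse inclusion $\E\subseteq{}_\E\C$ is exactly the assumption that $\E$ is a subcoalgebra. Because $-\ot_kV$ is exact and the coaction on the cofree comodule $\C\ot_kV$ is $\Delta\ot\mathrm{id}_V$, the same computation gives ${}_\E(\C\ot_kV)=\E\ot_kV$ for every $k$\+vector space $V$; moreover $\E\ot_kV$ is a genuine $\C$\+subcomodule of $\C\ot_kV$ (since $\Delta(\E)\subseteq\E\ot\E$), carrying the corestriction to $\C$ of the cofree $\E$\+comodule $\E\ot_kV$. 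Combining this with the left exactness of ${}_\E(-)$ and the identity ${}_\E N=N\cap{}_\E\M$ for a subcomodule $N\subseteq\M$, parts (a) and (c) and one half of (d) follow formally. For (a) (and for the ``$\L$ finitely cogenerated $\Rightarrow$ ${}_\E\L$ finitely cogenerated'' half of (d)): embed $\L\hookrightarrow\C\ot_kV$ with $\dim_kV<\infty$ and apply ${}_\E$. For (c): if $\L$ is finitely cogenerated over $\E$ then $\L\hookrightarrow\E\ot_kV_0\hookrightarrow\C\ot_kV_0$ is a chain of $\C$\+comodule embeddings; conversely, an $\E$\+comodule $\L$ with a $\C$\+comodule embedding $\L\hookrightarrow\C\ot_kW$ satisfies $\L={}_\E\L\hookrightarrow{}_\E(\C\ot_kW)=\E\ot_kW$.

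Next I would use that a cosimple $k$\+coalgebra $\E\cong M_n(k)^*$ has a semisimple comodule category with a single simple object $S_\E$, and $\E\cong S_\E^{\oplus n_\E}$ as a left $\E$\+comodule, where $n_\E=n=\dim_kS_\E$; hence ${}_\E(\C\ot_kV)=\E\ot_kV\cong S_\E^{\oplus n_\E\dim_kV}$. This yields (b): a nonzero $\C$ contains a cosimple subcoalgebra $\E$ (a summand of $\C^\ss\ne0$), and if $\C\ot_kV$ were finitely cogenerated, then (a) would embed $S_\E^{\oplus n_\E\dim_kV}$ into some $S_\E^{\oplus n_\E\dim_kW}$ with $\dim_kW<\infty$, forcing $\dim_kV<\infty$. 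It also yields the ``only if'' half of (e): from an embedding $\L\hookrightarrow\C\ot_kW$ we read off that the multiplicity of $S_\E$ in ${}_\E\L$ is at most $n_\E\dim_kW$, so the ratio of that multiplicity to the multiplicity of $S_\E$ in $\E$ is bounded by $\dim_kW$, uniformly over all cosimple $\E\subset\C$.

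The main point is the remaining half of (d): assuming $\C/\E$ conilpotent and ${}_\E\L$ finitely cogenerated over $\E$, show that $\L$ is finitely cogenerated over $\C$. I would fix an embedding ${}_\E\L\hookrightarrow\E\ot_kW_0={}_\E(\C\ot_kW_0)\subseteq\C\ot_kW_0$ with $\dim_kW_0<\infty$, and, using that the cofree $\C$\+comodule $\C\ot_kW_0$ is injective in $\C\comodl$, extend it along the inclusion ${}_\E\L\hookrightarrow\L$ to a morphism $f\colon\L\rarrow\C\ot_kW_0$. Then ${}_\E(\ker f)=\ker f\cap{}_\E\L=\ker(f|_{{}_\E\L})=0$, since $f$ restricts on ${}_\E\L$ to the chosen embedding; hence $\ker f=0$ by the dual Nakayama Lemma~\ref{comodule-nakayama}, and $f$ exhibits $\L$ as a subcomodule of a finitely cogenerated cofree $\C$\+comodule. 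Extracting injectivity of $f$ for free from Lemma~\ref{comodule-nakayama} is the crux of the argument, and the step I expect to be the main obstacle; the rest is formal.

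Finally I would assemble (e). Applying (d) with $\E=\C^\ss$, the comodule $\L$ is finitely cogenerated over $\C$ if and only if ${}_{\C^\ss}\L$ is finitely cogenerated over $\C^\ss$. Write $\C^\ss=\bigoplus_\alpha\E_\alpha$ as the direct sum of all its cosimple subcoalgebras (which are exactly the cosimple subcoalgebras of $\C$). Every $\C^\ss$\+comodule decomposes canonically as a direct sum $\bigoplus_\alpha M_\alpha$ of $\E_\alpha$\+comodules, respected by all morphisms; in particular ${}_{\C^\ss}\L=\bigoplus_\alpha{}_{\E_\alpha}\L$ and $\C^\ss\ot_kW=\bigoplus_\alpha(\E_\alpha\ot_kW)$. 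Writing ${}_{\E_\alpha}\L\cong S_{\E_\alpha}^{\oplus m_\alpha}$ and $\E_\alpha\cong S_{\E_\alpha}^{\oplus n_\alpha}$, one finds that ${}_{\C^\ss}\L$ embeds into $\C^\ss\ot_kW$ for some finite-dimensional $W$ exactly when each $m_\alpha$ is finite and $m_\alpha\le n_\alpha\dim_kW$ holds for all $\alpha$ simultaneously --- that is, exactly when each ${}_{\E_\alpha}\L$ is a finite direct sum of copies of $S_{\E_\alpha}$ and $\sup_\alpha m_\alpha/n_\alpha<\infty$. This is the stated criterion.
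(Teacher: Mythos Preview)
Your argument is correct and follows essentially the same approach as the paper's. The only minor variation is in part~(d): you invoke injectivity of the cofree comodule $\C\ot_kW_0$ to extend the embedding ${}_\E\L\hookrightarrow\C\ot_kW_0$ along ${}_\E\L\hookrightarrow\L$, whereas the paper uses the cofree adjunction directly (extending the underlying $k$\+linear map ${}_\E\L\to V$ to $\L\to V$ and passing to the associated comodule map $\L\to\C\ot_kV$); both routes land on the same application of Lemma~\ref{comodule-nakayama}. (One small imprecision: over a general field a cosimple coalgebra is dual to $M_n(D)$ for a division algebra~$D$, not necessarily $M_n(k)$, but your conclusions about the unique simple $S_\E$ and the decomposition $\E\cong S_\E^{\oplus n_\E}$ remain valid.)
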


\begin{proof}
 Part~(a): obviously, for any injective morphism of left
$\C$\+comodules $\L\rarrow\M$, the induced morphism
${}_\E\L\rarrow{}_\E\M$ is also injective, so it remains to notice
the natural isomorphism of $\E$\+comodules
${}_\E(\C\ot_kV)\simeq\E\ot_kV$ for any $k$\+vector space~$V$.
 Now it suffices to pick any nonzero finite-dimensional subcoalgebra
$\E\subset\C$ in order to deduce part~(b) from the latter isomorphism
and part~(a).
 Part~(c) follows from the same isomorphism.

 Part~(d): a morphism of $\C$\+comodules $\L\rarrow\C\ot_kV$ is
uniquely determined by its composition with the map $\C\ot_kV\rarrow V$
induced by the counit map $\C\rarrow k$ of the coalgebra $\C$; and
this composition can be an arbitrary $k$\+linear map $\L\rarrow V$.
 Suppose that we are given an injective morphism of $\E$\+comodules
${}_\E\L\rarrow\E\ot_kV$, where $V$ is a finite-dimensional
vector space.
 Consider the composition ${}_\E\L\rarrow\E\ot_kV\rarrow V$ and extend
it arbitrarily to a $k$\+linear map $\L\rarrow V$.
 The corresponding $\C$\+comodule morphism $\L\rarrow\C\ot_kV$
forms a commutative diagram with the injective morphism
${}_\E\L\rarrow\E\ot_kV$ and the embeddings ${}_\E\L\rarrow\L$ and
$\E\ot_k V\rarrow\C\ot_kV$.
 Denote by $\K$ the kernel of the morphism $\L\rarrow\C\ot_kV$;
then the submodule $\K\subset\L$ does not intersect the submodule
${}_\E\L\subset\L$, so one has ${}_\E\K=0$.
 By Lemma~\ref{comodule-nakayama}, it follows that $\K=0$.
 To prove part~(e), one applies part~(d) to the subcoalgebra
$\C^\ss\subset\C$ and then decomposes $\C^\ss$ into a direct sum
of its cosimple subcoalgebras~$\E$.
\end{proof}

 A $\C$\+comodule is called \emph{co-Noetherian} if all its
quotient $\C$\+comodules are finitely cogenerated~\cite{WW}.
 The class of co-Noetherian left $\C$\+comodules is closed under
subobjects, quotient objects, and extensions in the abelian category
$\C\comodl$ \cite[Proposition~4]{WW}, so co-Noetherian left
$\C$\+comodules form an abelian category.
 Given a subcoalgebra $\E\subset\C$, an $\E$\+comodule is co-Noetherian
if and only if it is co-Noetherian as a $\C$\+comodule.

 A $\C$\+comodule is called \emph{Artinian} if every descending
chain of its subcomodules terminates.
 As the class of Artinian objects in any abelian category, the class
of Artinian left $\C$\+comodules is closed under subobjects, quotient
objects, and extensions in the abelian category $\C\comodl$, so
Artinian left $\C$\+comodules form an abelian category.
 Given a subcoalgebra $\E\subset\C$, an $\E$\+comodule is Artinian
if and only if it is Artinian as a $\C$\+comodule.

\begin{lem} \label{co-Noetherian-Artinian}
\textup{(a)} Any Artinian\/ $\C$\+comodule is co-Noetherian. \par
\textup{(b)} If the subcoalgebra\/ $\C^\ss\subset\C$ is
finite-dimensional, then any co-Noetherian\/ $\C$\+comodule is
Artinian.
\end{lem}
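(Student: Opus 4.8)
The plan is to deduce both parts from elementary facts about socles of comodules. I shall use repeatedly that a simple $\C$\+comodule is finite-dimensional, and that the socle $\operatorname{soc}(\N)$ of a nonzero $\C$\+comodule $\N$ --- the sum of all its simple subcomodules --- is nonzero, since $\N$ contains a nonzero finite-dimensional subcomodule, which in turn contains a simple one. Equivalently, the socle of any $\C$\+comodule is an essential subcomodule.

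For part~(a): since Artinian objects are closed under quotients in an abelian category, it suffices to show that any Artinian $\C$\+comodule $\M$ is finitely cogenerated. First I would check that $\operatorname{soc}(\M)$ is finite-dimensional: otherwise, being semisimple, it contains a direct sum of infinitely many finite-dimensional simple subcomodules, and dropping these one at a time yields an infinite strictly descending chain of subcomodules of $\operatorname{soc}(\M)\subseteq\M$, contradicting the descending chain condition. Next, for each cosimple subcoalgebra $\E\subset\C$ the subcomodule ${}_\E\M$ is the $S_\E$\+isotypic part of $\operatorname{soc}(\M)$, where $S_\E$ is the irreducible $\E$\+comodule; since $\operatorname{soc}(\M)$ is finite-dimensional, ${}_\E\M$ is nonzero for only finitely many~$\E$, and is in every case a finite direct sum of copies of $S_\E$ with multiplicity at most $\dim_k\operatorname{soc}(\M)$. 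Dividing that multiplicity by the multiplicity of $S_\E$ in $\E$ (which is at least~$1$) only decreases it, so the bound $\dim_k\operatorname{soc}(\M)$ is uniform in~$\E$, and Lemma~\ref{subcoalgebra-finitely-cogenerated}(e) yields that $\M$ is finitely cogenerated.

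For part~(b), let $\M$ be a co-Noetherian $\C$\+comodule and let $N_0\supseteq N_1\supseteq\cdots$ be a descending chain of its subcomodules. I would first record that $\operatorname{soc}(\M)$ is finite-dimensional: being co-Noetherian, $\M$ is in particular finitely cogenerated, so it embeds into some $\C^n$, whence $\operatorname{soc}(\M)$ embeds into $\operatorname{soc}(\C)^n\subseteq(\C^\ss)^n$, which is finite-dimensional by hypothesis. The key step is then to replace $\M$ by $\M/N_\infty$ and each $N_i$ by $N_i/N_\infty$, where $N_\infty=\bigcap_iN_i$: the quotient $\M/N_\infty$ is again co-Noetherian (hence again has finite-dimensional socle), and the new chain stabilizes if and only if the original one does, so I may assume $\bigcap_iN_i=0$. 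Under this assumption $\operatorname{soc}(N_i)=N_i\cap\operatorname{soc}(\M)$ is a descending chain of subspaces of the finite-dimensional space $\operatorname{soc}(\M)$; it therefore stabilizes, and its eventual value equals $\bigcap_i\bigl(N_i\cap\operatorname{soc}(\M)\bigr)=\bigl(\bigcap_iN_i\bigr)\cap\operatorname{soc}(\M)=0$. Hence $\operatorname{soc}(N_i)=0$, and so $N_i=0$ by essentiality of the socle, for all sufficiently large~$i$; thus the chain stabilizes and $\M$ is Artinian.

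The step carrying the weight of part~(b) --- and the only one that genuinely uses the co-Noetherian hypothesis, not merely finite cogeneration --- is the reduction to the case $\bigcap_iN_i=0$. It is legitimate precisely because being co-Noetherian, unlike having a finite-dimensional socle, is inherited by quotient comodules. (Indeed, a finitely cogenerated comodule need not be Artinian even when its socle is one-dimensional: one can take an essential extension of a simple comodule by an infinite-dimensional semisimple comodule built from linearly independent extension classes.) Everything else is bookkeeping with the two socle facts recorded at the start.
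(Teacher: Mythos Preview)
Your proof is correct and, once one identifies $\operatorname{soc}(\M)$ with ${}_{\C^\ss}\M$ and ``essentiality of the socle'' with the comodule Nakayama lemma (Lemma~\ref{comodule-nakayama}), part~(b) is line-for-line the paper's argument. The only genuine difference is in part~(a): the paper does not invoke Lemma~\ref{subcoalgebra-finitely-cogenerated}(e) but instead builds an embedding $\L\hookrightarrow\C^{\oplus n}$ directly, by choosing linear functionals $\phi_1,\phi_2,\dotsc$ on $\L$ one at a time so that the kernels of the associated maps $f_i\:\L\to\C$ form a strictly descending chain, which the Artinian hypothesis forces to terminate. This is a bit more self-contained than your route via the finite-dimensional socle plus Lemma~\ref{subcoalgebra-finitely-cogenerated}(e); on the other hand, your socle argument makes the underlying reason (only finitely many simples occur, each with finite multiplicity) more transparent and connects cleanly with the language you already set up for part~(b).
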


\begin{proof}
 This is a subset of results of~\cite[Proposition~2.5]{GTNT}.
 Part~(a): it suffices to show that any Artinian left $\C$\+comodule
$\L$ is finitely cogenerated.
 Pick a nonzero linear function $\phi_1\:\L\rarrow k$ and consider
the related morphism of left $\C$\+comodules $f_1\:\L\rarrow\C$.
 Let $\L_1\subset\L$ denote the kernel of the morphism~$f_1$.
 Pick a nonzero linear function $\L_1\rarrow k$ and extend it to
a linear function $\phi_2\:\L\rarrow k$.
 Consider the related morphism of left $\C$\+comodules $f_2\:
\L\rarrow\C$; let $\L_2\subset\L_1$ denote the intersection of
the kernels of the morphisms $f_1$ and~$f_2$, etc.
 According to the descending chain condition, this process must
terminate, which can only happen if the intersection of the kernels
of the morphisms $f_1$,~\dots, $f_n$ is zero for some integer~$n$.
 We have constructed an injective morphism of left $\C$\+comodules
$\L\rarrow \C^{\oplus n}$.

 Part~(b): it suffices to show that any descending chain of
subcomodules $\L\supset\L_1\supset\L_2\supset\dotsb$ with zero
intersection $\bigcap_n\L_n=0$ terminates in a finitely cogenerated
left $\C$\+comodule~$\L$.
 Indeed, by Lemma~\ref{subcoalgebra-finitely-cogenerated}(a)
together with the assumption of part~(b) the subcomodule
${}_{\C^\ss}\L\subset\L$ is finite-dimensional.
 Hence the chain of intersections ${}_{\C^\ss}\L\cap\L_i$ stabilizes,
and consequently, eventually vanishes, i.~e., there exists~$n$
for which ${}_{\C^\ss}\L\cap\L_n=0$.
 Then it follows from Lemma~\ref{comodule-nakayama} that $\L_n=0$.
\end{proof}

 A \emph{left contramodule} $\P$ over a coassociative coalgebra~$\D$
over a field~$k$ is a $k$\+vector space endowed with a \emph{left\/
$\D$\+contraaction} map $\Hom_k(\D,\P)\rarrow\P$ satisfying
the appropriate \emph{contraassociativity} and \emph{contraunitality}
equations.
 Specifically, the two maps $\Hom_k(\D,\Hom_k(\D,\P)\simeq
\Hom_k(\D\ot_k\D\;\P)\birarrow\Hom_k(\D,\P)$ induced by
the comultiplication map $\D\rarrow\D\ot_k\D$ and the contraaction
map should have equal compositions with the contraaction map
$\Hom_k(\D,\P)\rarrow\P$,
$$
 \Hom_k(\D,\Hom_k(\D,\P))\simeq\Hom_k(\D\ot_k\D,\P)\birarrow
 \Hom_k(\D,\P)\rarrow\P,
$$
while the composition of the map $\P\rarrow\Hom_k(\D,\P)$ induced
by the counit map $\D\rarrow k$ with the contraaction map should
be equal to the identity map on the contramodule~$\P$,
$$
 \P\rarrow\Hom_k(\D,\P)\rarrow\P.
$$
 The natural isomorphism $\Hom_k(U,\Hom_k(V,W))\simeq
\Hom_k(V\ot_kU\;W)$ connecting the tensor product and Hom functors
on the category of $k$\+vector spaces is presumed in the first
equation.

 Left $\D$\+contramodules form an abelian category $\D\contra$
with an exact forgetful functor to the category of $k$\+vector
spaces $\D\contra\rarrow k\vect$, preserving infinite products
but not infinite direct sums (see~\cite[Sections~1.1\+-1.2]{Prev}
and the references therein).
 For any right $\D$\+comodule $\N$ and $k$\+vector space $V$,
the vector space $\Hom_k(\N,V)$ has a natural left
$\D$\+contramodule structure.
 In particular, the left $\D$\+contramodule $\Hom_k(\D,V)$ is called
the \emph{free left\/ $\D$\+contramodule generated by\/~$V$}.
 For any left $\D$\+contramodule~$\Q$, there is a natural isomorphism
$$
 \Hom^\D(\Hom_k(\D,V),\Q)\simeq\Hom_k(V,\Q),
$$
where for any two left $\D$\+contramodules $\P$ and $\Q$ we denote
by $\Hom^\D(\P,\Q)$ the $k$\+vector space of all morphisms
$\P\rarrow\Q$ in the abelian category $\D\contra$.
 Hence free $\D$\+contramodules are projective objects in $\D\contra$.
 There are enough of them, so any projective left $\D$\+contramodule
is a direct summand of a free one.
 The left $\D$\+contramodule $\Hom_k(\D,k)$ is called
the \emph{free\/ $\D$\+contramodule with one generator}, and
the $\D$\+contramodules $\Hom_k(\D,V)$ with finite-dimensional
$k$\+vector spaces $V$ are the \emph{finitely generated free\/
$\D$\+contramodules}.

 For any subcoalgebra $\E\subset\D$ and any left $\D$\+contramodule
$\P$, we denote by ${}^\E\P$ the maximal quotient $\D$\+contramodule
of $\P$ whose $\D$\+contramodule structure comes from
an $\E$\+contramodule structure.
 In other words, ${}^\E\P$ is the cokernel of the composition
$\Hom_k(\D/\E,\P)\rarrow\P$ of the embedding $\Hom_k(\D/\E,\P)\rarrow
\Hom_k(\D,\P)$ with the contraaction map $\Hom_k(\D,\P)\rarrow\P$.
 The following assertion is called the \emph{Nakayama lemma for
contramodules} over coalgebras over fields.

\begin{lem} \label{contramodule-nakayama}
 Let\/ $\E\subset\D$ be a subcoalgebra such that the quotient coalgebra
without counit\/ $\D/\E$ is conilpotent.
 Then the quotient contramodule ${}^\E\P$ is nonzero for any nonzero
left\/ $\D$\+contramodule\/~$\P$.
\end{lem}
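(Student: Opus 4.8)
The plan is to reduce the assertion to a statement about the conilpotent coalgebra without counit $\C'=\D/\E$, to run a d\'evissage along its conilpotency filtration, and to treat the remaining (conilpotent but non-nilpotent) case by a ``geometric series'' argument.

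By the definition of ${}^\E\P$, one has ${}^\E\P=0$ precisely when the composition $\theta\:\Hom_k(\D/\E,\P)\rarrow\Hom_k(\D,\P)\rarrow\P$ of the embedding induced by the coalgebra surjection $\D\rarrow\D/\E$ with the contraaction map is surjective; so it suffices to show that $\theta$ surjective forces $\P=0$. I would point out that, unlike in Lemma~\ref{comodule-nakayama}, the naive dualization of the comodule argument does not suffice here: dually to ``the reduced coaction map is injective, hence so are all of its iterates'', one only gets that $\theta$ remains surjective after restriction to the maps vanishing on arbitrarily high terms of the conilpotency filtration, which --- there being no finiteness available in the contramodule direction --- does not by itself imply $\P=0$. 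For the d\'evissage, let $0=F_0\subset F_1\subset F_2\subset\dotsb$ be the conilpotency filtration of $\C'=\D/\E$ (so $\bigcup_n F_n=\C'$), and let $\E=\D_0\subset\D_1\subset\D_2\subset\dotsb\subset\D$ be its preimages, $\D_n/\E=F_n$, $\bigcup_n\D_n=\D$. Using contraassociativity one checks that the $n$\+fold iterated contraaction $\Hom_k((\C')^{\ot n},\P)\rarrow\P$ is surjective and coincides with $\theta$ precomposed with $\Hom_k(\bar\Delta^{(n-1)},\P)$, where $\bar\Delta^{(n-1)}\:\C'\rarrow(\C')^{\ot n}$ is the iterated comultiplication; since the image of $\Hom_k(\bar\Delta^{(n-1)},\P)$ equals the subspace $\Hom_k(\C'/F_{n-1},\P)=\Hom_k(\D/\D_{n-1},\P)$ of those maps $\C'\rarrow\P$ that kill $F_{n-1}$, one concludes that ${}^{\D_{n-1}}\P=0$ for every $n$; equivalently, $\P$ carries, for each $n$, the structure of a $\D/\D_n$\+contramodule with surjective contraaction.

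If $\D/\E$ is \emph{nilpotent} --- in particular if it is finite-dimensional, so that $F_N=\C'$ for some $N$ --- this already finishes the proof: the quotient coalgebra without counit $\C'/F_{N-1}$ then has zero comultiplication, hence, by contraassociativity, its surjective contraaction $\theta_0$ on $\P$ satisfies $\theta_0\circ\Hom_k(\C'/F_{N-1},\theta_0)=\theta_0\circ 0=0$, while $\Hom_k(\C'/F_{N-1},\theta_0)$ is surjective; so $\theta_0=0$, and, $\theta_0$ being also surjective, $\P=0$. The main obstacle is the general conilpotent case, where $F_n\ne\C'$ for all $n$ and one cannot simply pass to the limit: the contraaction is not continuous for the filtration topology on $\Hom_k(\C',\P)$ (indeed $\theta$ stays surjective on every $\Hom_k(\D/\D_n,\P)$). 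I would handle this with a geometric-series/completeness argument that genuinely uses the infinite summation operations of a contramodule together with the conilpotency of $\D/\E$ (equivalently, the topological nilpotence of the dual algebra $(\D/\E)^*$): for $p\in\P$ one assembles a summable series whose contraaction equals $p$, and a telescoping manipulation, legitimate by contraunitality and contraassociativity, shows that $p$ must vanish. The mechanism is visible already in the model case $\D/\E=(tk[[t]])^*$, where ${}^\E\P=0$ reads $\P=t\P$: choosing $p_1,p_2,\dotsc$ with $p=tp_1$, $p_1=tp_2$, \dots (possible since $\P=t\P$), and setting $p_0=p$, the element $P=\sum_{n\ge0}t^np_n$ --- well defined because $\P$ is a contramodule --- equals simultaneously $p+t\widetilde P$ (split off the $n=0$ summand, using contraunitality $t^0p_0=p_0$) and $t\widetilde P$ (reindex via $p_n=tp_{n+1}$), where $\widetilde P=\sum_{n\ge0}t^np_{n+1}$; hence $p=0$, so $\P=t\P=0$. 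In the general conilpotent case one argues along the same lines, conilpotency being exactly what ensures that the relevant iterated infinite sums converge in $\P$.
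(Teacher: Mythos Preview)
The paper does not give a proof of this lemma; it simply cites \cite[Lemma~A.2.1]{Psemi}, \cite[Lemma~1.3.1]{Pweak}, and \cite[Lemma~2.1]{Prev}. Your approach --- a telescoping ``geometric series'' argument exploiting the infinite summation operations in a contramodule --- is indeed the approach used in those references, so you are on the right track.

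Your d\'evissage and the nilpotent case are correct (in fact your d\'evissage already finishes the nilpotent case without the extra step: once $\D_N=\D$ you have $\P={}^{\D_N}\P=0$ directly, so the computation with the zero comultiplication on $\C'/F_{N-1}$ is not needed). Your model computation for $\D/\E=(tk[[t]])^*$ is also correct. The genuine gap is the final sentence: ``in the general conilpotent case one argues along the same lines'' is where all the content lies, and you have not written it down. What is required is to pick a $k$\+linear section $\sigma\:\P\rarrow\Hom_k(\C',\P)$ of $\theta$, iterate it to produce $f_n\in\Hom_k((\C')^{\ot n},\P)$ with $f_0=p$ and $f_n=\sigma\circ f_{n-1}$ (under the tensor--Hom adjunction), and then form the single element
\[
 G \,=\, \epsilon\cdot p \;+\; \sum_{n\ge1} f_n\circ\bar\Delta^{(n-1)}\circ q
 \ \in\ \Hom_k(\D,\P),
\]
where $q\:\D\rarrow\C'$ is the projection. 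Conilpotency of $\C'$ is precisely what makes this sum pointwise finite on $\D$, so $G$ is well defined. One then applies contraassociativity to the element $\sigma\circ G\in\Hom_k(\D,\Hom_k(\D,\P))\simeq\Hom_k(\D\ot_k\D,\P)$, computing $\pi(G)$ in two ways: via $\Hom_k(\D,\pi)$ it returns $\pi(G)$, while via $\Hom_k(\Delta,\P)$ one checks (using $\theta\circ\sigma=\mathrm{id}$ and the compatibility of $\Delta$ with $\bar\Delta$) that it returns $\pi(G)-p$. This is exactly what your $k[[t]]$ argument becomes in the general case, but the bookkeeping with iterated comultiplications and the two applications of contraassociativity has to be written out --- it does not reduce to the one-variable case, and the d\'evissage you set up beforehand does not help here.
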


\begin{proof}
 This is~\cite[Lemma~A.2.1]{Psemi}; see also~\cite[Lemma~1.3.1]{Pweak}
and~\cite[Lemma~2.1]{Prev}.
\end{proof}

 A left $\D$\+contramodule is said to be \emph{finitely generated}
if it is a quotient contramodule of a finitely generated free left
$\D$\+contramodule.
 The class of finitely generated left $\D$\+contramodules is closed
under extensions and the passages to quotient objects.

\begin{lem} \label{subcoalgebra-contramod-finitely-generated}
\textup{(a)} For any finitely generated left\/ $\D$\+contramodule\/
$\Q$ and any subcoalgebra\/ $\E\subset\D$, the left\/
$\E$\+contramodule\/ ${}^\E\Q$ is finitely generated. \par
\textup{(b)} The free\/ $\D$\+contramodule $\Hom_k(\D,V)$ with
an infinite-dimensional vector space of generators $V$ over a nonzero
coalgebra\/ $\D$ is not finitely generated. \par
\textup{(c)} For any subcoalgebra\/ $\E\subset\D$, a left\/
$\E$\+contramodule is finitely generated if and only if it is
finitely generated as a left\/ $\D$\+contramodule. \par
\textup{(d)} Let\/ $\E\subset\D$ be a subcoalgebra such that
the quotient coalgebra without counit\/ $\D/\E$ is conilpotent.
 Then a left\/ $\D$\+contramodule\/ $\Q$ is finitely generated if
and only if the left\/ $\E$\+contramodule\/ ${}^\E\Q$ is finitely
generated. \par
\textup{(e)} A left\/ $\D$\+contramodule\/ $\Q$ is finitely generated
if and only if the left\/ $\E$\+contra\-module\/ ${}^\E\Q$ for every
cosimple subcoalgebra\/ $\E\subset\D$ is a finite direct sum of copies
of the irreductive left\/ $\E$\+contramodule with the multiplicity of
the irreductible left\/ $\E$\+contra\-module in\/ ${}^\E\Q$ divided by
its multiplicity in the left\/ $\E$\+contramodule\/ $\E^*=\Hom_k(\E,k)$
bounded by a single constant uniforly over all the cosimple subcoalgebras
$\E\subset\D$.
\end{lem}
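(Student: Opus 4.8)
The plan is to follow the proof of Lemma~\ref{subcoalgebra-finitely-cogenerated} step by step in the dual situation: comodules are replaced by contramodules, the subcomodule functor ${}_\E(-)$ by the quotient-contramodule functor ${}^\E(-)$, injective morphisms by surjective ones, kernels by cokernels, and Lemma~\ref{comodule-nakayama} by Lemma~\ref{contramodule-nakayama}. The computational heart of the matter is the natural isomorphism of left $\E$\+contramodules ${}^\E\Hom_k(\D,V)\simeq\Hom_k(\E,V)$, dual to the isomorphism ${}_\E(\C\ot_kV)\simeq\E\ot_kV$ used in the comodule case; it follows from ${}^\E(-)$ being left adjoint to the exact fully faithful functor $\E\contra\hookrightarrow\D\contra$, together with the adjunction isomorphisms $\Hom^\D(\Hom_k(\D,V),-)\simeq\Hom_k(V,-)$ and $\Hom^\E(\Hom_k(\E,V),-)\simeq\Hom_k(V,-)$, and it identifies the generators of the respective free contramodules (using that the counit of $\D$ restricts to that of~$\E$). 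Being a left adjoint, ${}^\E(-)$ is right exact; and for every left $\D$\+contramodule $\P$ the adjunction unit $\P\rarrow{}^\E\P$ is surjective, since by definition ${}^\E\P$ is a quotient contramodule of~$\P$.

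Part~(a): the functor ${}^\E(-)$ carries a surjection $\Hom_k(\D,V)\rarrow\Q$ with $\dim_kV<\infty$ to a surjection $\Hom_k(\E,V)\simeq{}^\E\Hom_k(\D,V)\rarrow{}^\E\Q$, so ${}^\E\Q$ is finitely generated over~$\E$. Part~(b): pick a nonzero finite\+dimensional subcoalgebra $\E\subset\D$ (one exists as $\D\ne0$); then ${}^\E\Hom_k(\D,V)\simeq\Hom_k(\E,V)$ is infinite\+dimensional whenever $V$ is, hence not finitely generated over $\E$ (over a finite\+dimensional coalgebra the finitely generated contramodules are precisely the finite\+dimensional ones), and so $\Hom_k(\D,V)$ is not finitely generated over $\D$ by part~(a). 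Part~(c): if an $\E$\+contramodule $\Q$ is a quotient of $\Hom_k(\E,V)={}^\E\Hom_k(\D,V)$ with $\dim_kV<\infty$, then, the embedding $\E\contra\hookrightarrow\D\contra$ being full and the unit $\Hom_k(\D,V)\rarrow{}^\E\Hom_k(\D,V)$ being surjective, $\Q$ is also a quotient of $\Hom_k(\D,V)$; conversely, a $\D$\+contramodule surjection $\Hom_k(\D,V)\rarrow\Q$ onto an $\E$\+contramodule $\Q$ factors through ${}^\E\Hom_k(\D,V)=\Hom_k(\E,V)$ by the universal property of ${}^\E(-)$.

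Part~(d) is the principal case; its ``only if'' direction is part~(a). For ``if'', suppose ${}^\E\Q$ is finitely generated over $\E$ and choose a surjection $\sigma\:\Hom_k(\E,V)\rarrow{}^\E\Q$ with $\dim_kV<\infty$, equivalently a $k$\+linear map $V\rarrow{}^\E\Q$. Since the unit $\Q\rarrow{}^\E\Q$ is surjective on underlying vector spaces, lift this to a $k$\+linear map $V\rarrow\Q$; it corresponds to a morphism of left $\D$\+contramodules $g\:\Hom_k(\D,V)\rarrow\Q$. Then ${}^\E g$, viewed as a morphism $\Hom_k(\E,V)\rarrow{}^\E\Q$ via the isomorphism above, coincides with $\sigma$, hence is surjective. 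Since ${}^\E(-)$ is right exact, it carries the cokernel of $g$ to the cokernel of ${}^\E g=\sigma$, which is zero; so the cokernel of $g$ vanishes by Lemma~\ref{contramodule-nakayama} (here $\D/\E$ is conilpotent by hypothesis), and thus $g$ is surjective and $\Q$ is finitely generated over $\D$. The one step that I expect to require genuine care is the identification ${}^\E g=\sigma$: by the universal property of the free $\E$\+contramodule on $V$ it suffices to compare the restrictions of ${}^\E g$ and $\sigma$ to the subspace $V$, and naturality of the adjunction unit shows that the restriction of ${}^\E g$ equals the composition $V\rarrow\Q\rarrow{}^\E\Q$, i.~e., the chosen lift followed by the unit, which is the map $V\rarrow{}^\E\Q$ that we started with.

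Part~(e) follows by applying part~(d) to the subcoalgebra $\E=\D^\ss\subset\D$ (for which $\D/\D^\ss$ is conilpotent): $\Q$ is finitely generated over $\D$ if and only if ${}^{\D^\ss}\Q$ is finitely generated over $\D^\ss$. Decompose $\D^\ss=\bigoplus_\E\E$ as the direct sum of its cosimple subcoalgebras, which are exactly the cosimple subcoalgebras of~$\D$. Since $\D^\ss$ is cosemisimple, the category $\D^\ss\contra$ decomposes as the product $\prod_\E\E\contra$, a $\D^\ss$\+contramodule $M$ corresponding to the family $({}^\E M)_\E$ of $\E$\+contramodules and the free $\D^\ss$\+contramodule $\Hom_k(\D^\ss,W)$ to the family $(\Hom_k(\E,W))_\E$ of free $\E$\+contramodules; and since each $\E$ is cosimple, $\E^*$ is a simple finite\+dimensional algebra, so $\E\contra$ is semisimple with a single irreducible object $I_\E$, and $\Hom_k(\E,W)\simeq I_\E^{r_\E\dim_kW}$, where $r_\E$ is the multiplicity of $I_\E$ in $\E^*=\Hom_k(\E,k)$. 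Hence ${}^{\D^\ss}\Q=\prod_\E{}^\E\Q$ is finitely generated over $\D^\ss$ if and only if every ${}^\E\Q$ is a finite direct sum $I_\E^{m_\E}$ and $m_\E\le Cr_\E$ for some constant $C$ and all~$\E$; combined with the reduction via part~(d), this is the assertion. The product decomposition $\D^\ss\contra\simeq\prod_\E\E\contra$ for the cosemisimple coalgebra $\D^\ss$ would be invoked as a known structural fact about contramodules.
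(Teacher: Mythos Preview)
Your proof is correct and follows essentially the same approach as the paper's: the key isomorphism ${}^\E\Hom_k(\D,V)\simeq\Hom_k(\E,V)$ for parts~(a--c), the lifting-and-Nakayama argument for part~(d), and the reduction to $\D^\ss$ with the product decomposition of $\D^\ss\contra$ for part~(e). The paper is terser and simply cites \cite[Lemma~A.2.2]{Psemi} for the product decomposition in~(e), whereas you spell out the identification ${}^\E g=\sigma$ in~(d) in more detail than the paper does; but the ideas are identical.
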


\begin{proof}
 The proof is dual-analogous to that of
Lemma~\ref{subcoalgebra-finitely-cogenerated}.
 To prove parts~(a\+c), one notices the natural isomorphism
${}^\E\Hom_k(\D,V)\simeq\Hom_k(\E,V)$ for any subcoalgebra
$\E\subset\D$ and $k$\+vector space~$V$.
 Part~(d): given a surjective morphism of $\E$\+contramodules
$\Hom_k(\E,V)\lrarrow{}^\E\Q$ with a finite-dimensional vector
space~$V$, one considers the composition $V\rarrow\Hom_k(\E,V)\rarrow
{}^\E\Q$ and lifts it to a $k$\+linear map $V\rarrow\Q$.
 The corresponding morphism of $\D$\+contramodules $\Hom_k(\D,V)
\rarrow\Q$ is surjective by Lemma~\ref{contramodule-nakayama}, since
one has ${}^\E\fK=0$ for its cokernel~$\fK$.
 To prove part~(e), one applies part~(d) to the subcoalgebra
$\D^\ss\subset\D$ and applies~\cite[Lemma~A.2.2]{Psemi} in order to
decompose the $\D^\ss$\+contramodule ${}^{\D^\ss}\Q$ into a product of
contramodules over the cosimple subcoalgebras $\E\subset\D^\ss$.
\end{proof}

 A left $\D$\+contramodule is called \emph{Noetherian} if all its
subcontramodules are finitely generated.
 The class of Noetherian left $\D$\+contramodules is closed under
subobjects, quotient objects, and extensions in the abelian category
$\D\contra$, so Noetherian left $\D$\+contramodules form an abelian
category.
 Given a subcoalgebra $\E\subset\D$, an $\E$\+contramodule is
Noetherian if and only if it is Noetherian as a $\D$\+contramodule.

 A $\D$\+contramodule is called \emph{co-Artinian} if every ascending
chain of its subcontramodules terminates.
 As the similar class of objects in any abelian category, the class of
co-Artinian left $\D$\+contramodules is closed under subobjects,
quotient objects, and extensions in the abelian category $\D\contra$,
so co-Artinian left $\D$\+contramodules form an abelian category.
 Given a subcoalgebra $\E\subset\D$, an $\E$\+contramodule is
co-Artinian if and only if it is co-Artinian as a $\D$\+contramodule.

\begin{lem}
\textup{(a)} Any co-Artinian $\D$\+contramodule is Noetherian. \par
\textup{(b)} If the subcoalgebra $\D^\ss\subset\D$ is
finite-dimensional, then any Noetherian $\D$\+contramodule is
co-Artinian.
\end{lem}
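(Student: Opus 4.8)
The plan is to run the argument in parallel with the proof of Lemma~\ref{co-Noetherian-Artinian}, systematically interchanging subobjects with quotient objects and the functor ${}_\E(-)$ on comodules with the functor ${}^\E(-)$ on contramodules. For part~(a) it suffices to show that every co-Artinian left $\D$\+contramodule $\Q$ is finitely generated, since a subcontramodule of a co-Artinian contramodule is again co-Artinian, so this gives that all subcontramodules of a co-Artinian $\Q$ are finitely generated, i.~e., that $\Q$ is Noetherian. To prove a co-Artinian $\Q$ is finitely generated I would argue by contradiction, dually to Lemma~\ref{co-Noetherian-Artinian}(a): if $\Q$ is not finitely generated, then for any $q_1,\dots,q_n\in\Q$ the subcontramodule $\langle q_1,\dots,q_n\rangle\subset\Q$ they generate (the image of the morphism $\Hom_k(\D,k^n)\rarrow\Q$ sending the standard generators to the~$q_i$) is proper, so one can choose $q_{n+1}\in\Q\setminus\langle q_1,\dots,q_n\rangle$ at each step and obtain a strictly ascending chain $\langle q_1\rangle\subsetneq\langle q_1,q_2\rangle\subsetneq\dotsb$ of subcontramodules, contradicting the ascending chain condition. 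No finiteness assumption on $\D$ enters here.

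For part~(b), which is the dual of Lemma~\ref{co-Noetherian-Artinian}(b) and the substantial half, let $\Q$ be a Noetherian left $\D$\+contramodule and $\Q_1\subseteq\Q_2\subseteq\dotsb$ an ascending chain of subcontramodules; I must show it stabilizes. First I would pass to $\Q':=\bigvee_i\Q_i$, the smallest subcontramodule of $\Q$ containing all the~$\Q_i$: since $\Q$ is Noetherian, $\Q'$ is finitely generated, and as the chain lies in $\Q'$ with $\bigvee_i\Q_i=\Q'$ we may assume $\Q$ itself is finitely generated and $\bigvee_i\Q_i=\Q$. Now apply the functor ${}^{\D^\ss}(-)$, which is right exact (being left adjoint to the inclusion $\D^\ss\contra\rarrow\D\contra$), to the exact sequences $\Q_i\rarrow\Q\rarrow\Q/\Q_i\rarrow0$: this produces natural surjections ${}^{\D^\ss}\Q\rarrow{}^{\D^\ss}(\Q/\Q_i)$ whose kernels, by naturality of the canonical projections $(-)\rarrow{}^{\D^\ss}(-)$, are the images $N_i:=\pi(\Q_i)$ under $\pi\:\Q\rarrow{}^{\D^\ss}\Q$, so that ${}^{\D^\ss}(\Q/\Q_i)\simeq{}^{\D^\ss}\Q/N_i$. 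By Lemma~\ref{subcoalgebra-contramod-finitely-generated}(a) the $\D^\ss$\+contramodule ${}^{\D^\ss}\Q$ is finitely generated, hence, $\D^\ss$ being finite-dimensional, it is finite-dimensional over~$k$ (a quotient of some $\Hom_k(\D^\ss,k^n)$). Consequently the ascending chain of $k$\+subspaces $N_1\subseteq N_2\subseteq\dotsb$ of ${}^{\D^\ss}\Q$ stabilizes, say at $N_n$, whence $\bigvee_iN_i=N_n$; but also $\bigvee_iN_i=\pi\bigl(\bigvee_i\Q_i\bigr)=\pi(\Q)={}^{\D^\ss}\Q$, since the image of a join of subobjects under a surjection is the join of their images. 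Therefore $N_n={}^{\D^\ss}\Q$, so ${}^{\D^\ss}(\Q/\Q_n)=0$, and since $\D/\D^\ss$ is conilpotent the Nakayama lemma (Lemma~\ref{contramodule-nakayama}) forces $\Q/\Q_n=0$, i.~e., $\Q_n=\Q$.

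The step I expect to be the main obstacle is the reduction at the beginning of part~(b) and the bookkeeping that accompanies it. In the comodule case one simply passes to $\L/\bigcap_i\L_i$, using that an intersection of subcomodules is computed on underlying vector spaces; but for contramodules the set-theoretic union of an ascending chain of subcontramodules need not be a subcontramodule, so the join $\bigvee_i\Q_i$ can be strictly larger than $\bigcup_i\Q_i$ and there is no passage to a union available. What saves the argument is the categorical fact that a surjection carries joins of subobjects to joins of their images, together with the trivial remark that a stabilizing ascending chain of subobjects equals its own eventual term; pushing the chain forward along $\pi$ into the finite-dimensional contramodule ${}^{\D^\ss}\Q$ places it in a setting where both facts apply, and the Nakayama lemma then transports the resulting vanishing back up to~$\Q$.
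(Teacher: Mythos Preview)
Your proof is correct and follows essentially the same approach as the paper's. For part~(a) both arguments build an ascending chain of finitely generated subcontramodules, and for part~(b) both reduce to a finitely generated $\Q$ with $\bigvee_i\Q_i=\Q$, push the chain into the finite-dimensional quotient ${}^{\D^\ss}\Q$, and apply Nakayama; you merely spell out more carefully the reduction step and the fact that $\pi$ takes joins to joins, which the paper leaves implicit.
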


\begin{proof}
 Part~(a): it suffices to show that any co-Artinian left
$\D$\+contramodule $\Q$ is finitely generated.
 Pick an element $q_1\in\Q$ and consider the related morphism of
left $\D$\+contramodules $f_1\:\D^*=\Hom_k(\D,k)\rarrow\Q$.
 Pick an element $q_2\in\Q$ outside of the image of~$f_1$, consider
the related morphism $f_2\:\D^*\rarrow\Q$, pick an element $q_3\in\Q$
outside of the sum of the images of $f_1$ and~$f_2$, etc.
 According to the ascending chain condition, this process must
terminate, which means that the sum of the images of
the morphisms~$f_1$,~\dots, $f_n$ is the whole of $\Q$ for some
integer~$n$.
 We have constructed as surjective morphism of left $\D$\+contramodules
$\D^*{}^{\oplus n}\rarrow\Q$.

 Part~(b): it suffices to show that an ascending chain of
subcontramodules $\Q_1\subset\Q_2\subset\dotsb\subset\Q$ terminates
in a finitely generated left $\D$\+contramodule $\Q$ provided that
there is no proper subcontramodule in $\Q$ containing all
the subcontramodules~$\Q_n$.
 Indeed, by Lemma~\ref{subcoalgebra-contramod-finitely-generated}(a)
together with the assumption of part~(b) the maximal quotient
$\D^\ss$\+contramodule ${}^{\D^\ss}\Q$ of the $\D$\+contramodule $\Q$
is finite-dimensional.
 Hence the chain of the images of the subcontramodules $\Q_n\subset\Q$
in ${}^{\D^\ss}\Q$ stabilizes, and consequently, eventually reaches
the whole of ${}^{\D^\ss}\Q$, i.~e., there exists~$n$ for which
the composition $\Q_n\rarrow\Q\rarrow{}^{\D^\ss}\Q$ is surjective.
 Then one has ${}^{\D^\ss}(\Q/\Q_n)=0$, and it follows from
Lemma~\ref{contramodule-nakayama} that $\Q_n=\Q$.
\end{proof}

\begin{ex}  \label{co-Noetherian-Artinian-counterex}
 Let $\C$ be an infinite-dimensional cosemisimple coalgebra.
 Then the left $\C$\+comodule $\C$ is co-Noetherian, but not Artinian.
 Similarly, the left $\C$\+contra\-module $\C^*$ is Noetherian, but
not co-Artinian.
 It follows that the classes of Artinian and co-Noetherian left
comodules over a coalgebra $\D$ coincide if and only if the classes of
co-Artinian and Noetherian left contramodules over $\D$ coincide and
if and only if the subcoalgebra $\D^\ss\subset\D$ is finite-dimensional.
\end{ex}

 A left $\D$\+contramodule is said to be \emph{finitely presented} if
it is the cokernel of a morphism of finitely generated free left
$\D$\+contramodules.
 Clearly, the cokernel of a morphism from a finitely generated left
$\D$\+contramodule to a finitely presented one is finitely presented.
 It is easy to check that an extension of finitely presented
left $\D$\+contramodules is finitely presented.

 A left $\C$\+comodule is said to be \emph{finitely copresented} if
it is the kernel of a morphism of finitely cogenerated cofree
$\C$\+comodules.
 Clearly, the kernel of a morphism from a finitely copresented left
$\C$\+comodule to a finitely cogenerated one is finitely copresented;
an extension of finitely copresented left $\C$\+comodules is finitely
copresented.

 Part~(a) of the next lemma can be found in~\cite[Theorem~6]{WW}.

\begin{lem} \label{finitely-presented}
\textup{(a)} The cokernel of an injective morphism from a finitely
copresented\/ $\C$\+comodule to a finitely cogenerated one is
finitely cogenerated. \par
\textup{(b)} The kernel of a surjective morphism from a finitely
generated\/ $\D$\+contramodule to a finitely presented one is
finitely generated.
\end{lem}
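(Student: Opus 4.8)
The two parts are mutually dual: interchanging comodules with contramodules, cofree with free, injective with projective, subobjects with quotient objects, and pushouts with pullbacks turns one into the other. So the plan is to carry out part~(a) in detail and then obtain part~(b) by reversing every arrow. The naive approach to~(a) --- bounding the cokernel $\M/i(\L)$ directly via the criteria for finite cogeneration in Lemma~\ref{subcoalgebra-finitely-cogenerated}(d)--(e) --- is blocked by the fact that passing to a quotient comodule does not preserve finite cogeneration, while the functors ${}_\E(-)$ are only left exact. The trick I would use instead is to replace the quotient $\M/i(\L)$ by an extension by a cofree comodule together with a \emph{split} short exact sequence; this is exactly what a well-chosen pushout produces.

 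Concretely, for part~(a): as $\L$ is finitely copresented, write it as the kernel of a morphism $g\colon\C\ot_kV\to\C\ot_kV'$ of finitely cogenerated cofree comodules, so that $0\to\L\to\C\ot_kV\to\I\to0$ is exact with $\I\cong\operatorname{im}(g)$ a subcomodule of $\C\ot_kV'$, hence finitely cogenerated. Form the pushout $X$ of the two monomorphisms $\M\xleftarrow{\,i\,}\L\hookrightarrow\C\ot_kV$ in $\C\comodl$, and apply to each of these two monic legs the standard fact that a pushout of a monomorphism in an abelian category is again a monomorphism, with the same cokernel. This gives short exact sequences $0\to\M\to X\to\I\to0$ and $0\to\C\ot_kV\to X\to\M/i(\L)\to0$. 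From the first, since $\M$ and $\I$ are finitely cogenerated and finitely cogenerated comodules are closed under extensions, $X$ is finitely cogenerated. In the second, $\C\ot_kV$ is injective (cofree comodules are injective), so the sequence splits; hence $\M/i(\L)$ is a direct summand, in particular a subcomodule, of the finitely cogenerated comodule $X$, and so is finitely cogenerated. This proves~(a).

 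For part~(b) I would dualize step by step. As $\P$ is finitely presented, write it as the cokernel of a morphism $\Hom_k(\D,V')\to\Hom_k(\D,V)$ of finitely generated free contramodules, so that $0\to\J\to\Hom_k(\D,V)\to\P\to0$ is exact with $\J$ a quotient of $\Hom_k(\D,V')$, hence finitely generated. Form the pullback $X$ of the two epimorphisms $\Q\xrightarrow{\,f\,}\P\leftarrow\Hom_k(\D,V)$; the dual facts (a pullback of an epimorphism is an epimorphism, with the same kernel) yield short exact sequences $0\to\J\to X\to\Q\to0$ and $0\to\ker f\to X\to\Hom_k(\D,V)\to0$. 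The first shows $X$ is finitely generated (finitely generated contramodules are closed under extensions); in the second $\Hom_k(\D,V)$ is projective (free contramodules are projective), so the sequence splits, and $\ker f$ is a direct summand, in particular a quotient, of the finitely generated contramodule $X$, hence finitely generated.

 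The hypotheses enter exactly where they must: finite \emph{copresentation} of $\L$ (rather than mere finite cogeneration) is what makes the auxiliary quotient $\I$ finitely cogenerated, and dually finite \emph{presentation} of $\P$ is what makes $\J$ finitely generated. The only ingredients that are not pure bookkeeping are the two elementary facts about pushouts and pullbacks in abelian categories used above, so I expect no real obstacle; the one point requiring an idea is the reformulation that recasts the quotient situation as a pushout (respectively, pullback), thereby splitting off the cofree (respectively, free) summand.
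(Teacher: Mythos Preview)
Your proof is correct and follows essentially the same approach as the paper: form the pushout (the paper's ``fibered coproduct'' $\N$) of the finitely cogenerated comodule and the first cofree comodule along $\L$, observe it is finitely cogenerated as an extension, and split off the cokernel using injectivity of the cofree comodule; part~(b) is obtained dually in both. The only difference is notational (the paper writes the copresentation as $\L=\ker(\I\to\J)$ with $\I$, $\J$ cofree, and records the first short exact sequence as $0\to\M\to\N\to\J$ rather than isolating the image).
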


\begin{proof}
 Part~(a): let $\L$ be the kernel of a morphism of finitely cogenerated
cofree $\C$\+comodules $\I\rarrow\J$, let $\M$ be a finitely
cogenerated $\C$\+comodule, and let $\L\rarrow\M$ be an injective
morphism with the cokernel~$\K$.
 Denote by $\N$ the fibered coproduct of $\C$\+comodules $\I$ and
$\M$ over the $\C$\+comodule~$\L$; then there are exact sequences
of $\C$\+comodules $0\rarrow\M\rarrow\N\rarrow\J$ and $0\rarrow\I
\rarrow\N\rarrow\K\rarrow0$.
 Now the $\C$\+comodule $\N$ is finitely cogenerated as an extension
of finitely cogenerated $\C$\+comodules; and the $\C$\+comodule $\K$ is
a direct summand of $\N$, because the $\C$\+comodule $\I$ is injective.
 The proof of part~(b) is analogous.
\end{proof}

 The dual vector space $\D^*$ to a coassociative coalgebra $\D$ has
a natural structure of topological associative algebra.
 There is but a slight ambiguity in its definition in that one has
to make a decision about the order of the factors in the multiplication
operation, i.~e., which one of the two opposite algebras is to be
denoted by $\D^*$ and which one by~$\D^*{}^\rop$.
 We prefer the convention according to which right $\D$\+comodules $\N$
become discrete right $\D^*$\+modules; then the dual vector space
$\N^*$ is a left $\D^*$\+module (see~\cite[Sections~1.3\+-4]{Prev} for
a further discussion).
 Any left $\D$\+contramodule has an underlying structure of left
$\D^*$\+module (see~\cite[Section~2.3]{Prev}
and~\cite[Section~A.1.2]{Psemi}).

 One observes that a left $\D$\+contramodule is finitely generated if
and only if its underlying left $\D^*$\+module is finitely generated.
 It follows that a left $\D$\+contramodule is finitely presented if
and only if its underlying left $\D^*$\+module is.

\begin{prop} \label{finitely-co-presented-hom}
\textup{(a)} The restrictions of the functor\/ $\L\longmapsto\L^*=
\Hom_k(\L,k)$ and the forgetful functor\/ $\D\contra\rarrow\D^*\modl$
provide an anti-equivalence between the additive category of finitely
copresented right\/ $\D$\+comodules and the additive category of
finitely presented left\/ $\D$\+contramodules, and an isomorphism
between the latter category and the additive category of finitely
presented left\/ $\D^*$\+modules. \par
\textup{(b)} For any right\/ $\D$\+comodule\/ $\N$ and any finitely
copresented right\/ $\D$\+comodule\/ $\L$, the functor\/
$\N\longmapsto\N^*=\Hom_k(\N,k)$ and the forgetful functor\/
$\D\contra\rarrow\D^*\modl$ induce isomorphisms of the Hom spaces
$$
 \Hom_{\D^\rop}(\N,\L)\simeq\Hom^\D(\L^*,\N^*)\simeq
 \Hom_{\D^*}(\L^*,\N^*)
$$
in the categories of right\/ $\D$\+comodules, left\/
$\D$\+contramodules, and left\/ $\D^*$\+modules.
\end{prop}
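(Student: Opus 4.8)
The plan is to reduce both parts to the case of finitely (co)generated (co)free objects, where the required isomorphisms follow directly from the adjunction formulas recalled above, and then to propagate the result to finitely (co)presented objects by a diagram chase on a two-term (co)presentation. I would prove part~(b) first and deduce part~(a) from it.

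\emph{Step 1: the cofree/free case of~(b).} Fix a finite-dimensional $k$-vector space $V$. The natural $k$-linear isomorphisms $\Hom_k(V\ot_k\D,k)\simeq\Hom_k(V,\Hom_k(\D,k))\simeq V^*\ot_k\Hom_k(\D,k)\simeq\Hom_k(\D,V^*)$ (the second and third using $\dim_kV<\infty$) are readily checked to respect the $\D$-contramodule structures, so the dual of the finitely cogenerated cofree right $\D$-comodule $V\ot_k\D$ is the finitely generated free left $\D$-contramodule $\Hom_k(\D,V^*)$. For an arbitrary right $\D$-comodule $\N$ I would then compose the cofree-comodule adjunction $\Hom_{\D^\rop}(\N,V\ot_k\D)\simeq\Hom_k(\N,V)$ with the free-contramodule adjunction $\Hom^\D(\Hom_k(\D,V^*),\N^*)\simeq\Hom_k(V^*,\N^*)$ and the tensor--Hom adjunctions over~$k$, namely $\Hom_k(V^*,\N^*)\simeq\Hom_k(\N\ot_kV^*,k)\simeq\Hom_k(\N,V^{**})=\Hom_k(\N,V)$, to obtain an isomorphism $\Hom_{\D^\rop}(\N,V\ot_k\D)\simeq\Hom^\D((V\ot_k\D)^*,\N^*)$, which one verifies on generators to be the transpose map $f\longmapsto f^*=\Hom_k(f,k)$. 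The same chain of isomorphisms identifies the underlying left $\D^*$-module of $\Hom_k(\D,V^*)$ with the free module $\D^*\ot_kV^*$ --- here one uses the standard fact that the underlying $\D^*$-module of the free contramodule $\Hom_k(\D,k)$ is $\D^*$ with its left regular action --- so that $\Hom_{\D^*}((V\ot_k\D)^*,\N^*)\simeq\Hom_k(\N,V)$ as well, compatibly with the forgetful functor.

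\emph{Step 2: the general case of~(b).} Write a finitely copresented right $\D$-comodule as $\L=\ker(\I\rarrow\J)$ with $\I$ and $\J$ finitely cogenerated cofree. Since $\Hom_k(-,k)$ is exact, one obtains an exact sequence $\J^*\rarrow\I^*\rarrow\L^*\rarrow0$ of finitely generated free left $\D$-contramodules; in particular $\L^*$ is finitely presented, and by the observation preceding the proposition so is its underlying $\D^*$-module. Now $\Hom_{\D^\rop}(\N,-)$ is left exact on right $\D$-comodules, while $\Hom^\D(-,\N^*)$ and $\Hom_{\D^*}(-,\N^*)$ send the right-exact sequence $\J^*\rarrow\I^*\rarrow\L^*\rarrow0$ to left-exact ones; the natural transformation $f\mapsto f^*$ of Step~1 therefore fits into a commutative diagram of three-term exact sequences whose vertical arrows over $\I$ and $\J$ are isomorphisms, so by the four lemma (comparison of kernels) the remaining arrow $\Hom_{\D^\rop}(\N,\L)\rarrow\Hom^\D(\L^*,\N^*)$, and likewise $\Hom_{\D^\rop}(\N,\L)\rarrow\Hom_{\D^*}(\L^*,\N^*)$, is an isomorphism. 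This proves~(b).

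\emph{Step 3: part~(a), and the main obstacle.} Taking $\N=\L'$ finitely copresented in~(b) shows that $\L\longmapsto\L^*$ is fully faithful from the opposite of the category of finitely copresented right $\D$-comodules into finitely presented left $\D$-contramodules, and that the forgetful functor embeds the latter fully faithfully into finitely presented left $\D^*$-modules. For essential surjectivity of $\L\mapsto\L^*$: given a finitely presented left $\D$-contramodule $\P$ with presentation $\Hom_k(\D,W')\rarrow\Hom_k(\D,W)\rarrow\P\rarrow0$, $\dim_kW,\dim_kW'<\infty$, Step~1 together with~(b) applied to $\N=W^*\ot_k\D$ and $\L=W'^*\ot_k\D$ shows the map $\Hom_k(\D,W')\rarrow\Hom_k(\D,W)$ equals $g^*$ for a morphism $g\:W^*\ot_k\D\rarrow W'^*\ot_k\D$ of finitely cogenerated cofree right comodules, whence $\P\simeq\operatorname{coker}(g^*)\simeq(\ker g)^*$ with $\ker g$ finitely copresented. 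Finally, a finitely presented left $\D^*$-module is $\operatorname{coker}(\D^{*\oplus n}\rarrow\D^{*\oplus m})$, and by the full faithfulness just proved this $\D^*$-module map underlies a morphism of finitely generated free left $\D$-contramodules; its cokernel taken in $\D\contra$ has the given module as its underlying $\D^*$-module, because $\D\contra\rarrow\D^*\modl$ is exact. This gives essential surjectivity onto finitely presented $\D^*$-modules and completes~(a). The real work is concentrated in Step~1 and the bookkeeping of Step~2: one must verify that every $k$-linear adjunction isomorphism used is natural in its comodule/contramodule argument and genuinely intertwines the transpose $f\mapsto f^*$ with the forgetful functor, so that the four lemma applies; and one must invoke the cited structure theory for the identification of the underlying $\D^*$-module of $\Hom_k(\D,k)$, which is what aligns the ``free contramodule'' and ``free $\D^*$-module'' descriptions on finitely generated objects.
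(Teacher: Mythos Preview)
Your proposal is correct and follows essentially the same approach as the paper: reduce part~(b) to the cofree case $\L=V\ot_k\D$ via the left-exactness of the three Hom functors, compute all three Hom spaces there as $\Hom_k(\N,V)$ using the adjunction formulas, and then deduce part~(a) from part~(b). The paper's proof is considerably more compressed---it dispatches part~(a) with the single phrase ``immediately follows from the same computation of Hom spaces''---whereas you spell out the essential surjectivity arguments in Step~3 explicitly; but the underlying strategy is identical.
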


\begin{proof}
 Since the functor $\Hom$ preserves kernels in its second argument
and transforms cokernels in its first argument into kernels, it
suffices to prove part~(b) for finitely generated cofree right
$\D$\+comodules $\L=V\ot_k\D$, where $V$ is a finite-dimensional
$k$\+vector space.
 Then $\L^*\simeq\Hom_k(\D,V^*)\simeq \D^*\ot_k V^*$ is a finitely
generated free left $\D$\+contramodule and a finitely generated
free left $\D^*$\+module.
 One easily computes $\Hom_{\D^\rop}(\N\;V\ot_k\D)\simeq\Hom_k(\N,V)$,
\ $\Hom^\D(\Hom_k(\D,V^*),\N^*)\simeq\Hom_k(V^*,\N^*)$, and
$\Hom_{\D^*}(\D^*\ot_kV^*\;\N^*)\simeq\Hom_k(V^*,\N^*)$,
implying part~(b).
 Part~(a) immediately follows from the same computation of Hom spaces.
\end{proof}

\begin{lem} \label{co-contra-Artinian-Noetherian}
\textup{(a)} A left\/ $\D$\+contramodule is co-Artinian if and only if
it is a Noetherian left\/ $\D^*$\+module. \par
\textup{(b)} A right\/ $\D$\+comodule\/ $\L$ is Artinian if and only if
dual vector space\/ $\L^*$ is a Noetherian left\/ $\D^*$\+module. \par
\textup{(c)} A right\/ $\D$\+comodule\/ $\L$ is co-Noetherian provided
that its dual vector space\/ $\L^*$ is a Noetherian left\/
$\D$\+contramodule.
\end{lem}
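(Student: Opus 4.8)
The plan is to establish (a), (b), (c) by transferring chain conditions between the lattices of $\D^*$\+submodules, subcomodules, and subcontramodules, using $k$\+linear duality together with the reduction functors ${}^{\D^\ss}(-)$ on contramodules and ${}_{\D^\ss}(-)$ on comodules. Throughout I write $M^\perp$ for the annihilator of a subset $M$ under the canonical pairing between a comodule and its dual, and use that right $\D$\+comodules are the same as discrete right $\D^*$\+modules, so their duals are left $\D^*$\+modules. For part~(a), one implication is immediate: a subcontramodule of a left $\D$\+contramodule $\P$ is in particular a $\D^*$\+submodule, so if $\P$ is Noetherian over $\D^*$ then every ascending chain of subcontramodules stabilizes, i.e.\ $\P$ is co\+Artinian. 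For the converse the key point is that every \emph{finitely generated} $\D^*$\+submodule $N=\D^*p_1+\dots+\D^*p_n\subseteq\P$ is automatically a subcontramodule: it is the image of the morphism of left $\D$\+contramodules $\Hom_k(\D,k^n)\rarrow\P$ sending the free generators to $p_1,\dots,p_n$ (the underlying $\D^*$\+module of $\Hom_k(\D,k^n)$ being free of rank $n$ on these generators), and images of contramodule morphisms are subcontramodules. Hence if $\P$ were co\+Artinian and some $\D^*$\+submodule $N$ were not finitely generated, then picking $p_1\in N$, $p_2\in N\smallsetminus\D^*p_1$, $p_3\in N\smallsetminus(\D^*p_1+\D^*p_2)$, and so on, would give a strictly ascending chain of finitely generated $\D^*$\+submodules of $N$, hence a strictly ascending chain of subcontramodules of $\P$ — a contradiction; so every $\D^*$\+submodule of $\P$ is finitely generated.

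For part~(b), the implication ``$\L^*$ Noetherian over $\D^*$ $\Rightarrow$ $\L$ Artinian'' is analogous: for a descending chain of subcomodules $\L\supseteq\L_1\supseteq\L_2\supseteq\dotsb$ the annihilators $\L_i^\perp=(\L/\L_i)^*\subseteq\L^*$ form an ascending chain of left $\D^*$\+submodules (the annihilator of a right $\D^*$\+submodule of the discrete module $\L$ is a left $\D^*$\+submodule of $\L^*$), which stabilizes, and a subcomodule of the discrete module $\L$ equals its double annihilator. For the converse it suffices, by (a), to show that the left $\D$\+contramodule $\L^*$ is co\+Artinian. So let $\Q_1\subseteq\Q_2\subseteq\dotsb\subseteq\L^*$ be an ascending chain of subcontramodules and put $\L_i=\Q_i^\perp\subseteq\L$; each $\L_i$ is a subcomodule, the chain $\L_1\supseteq\L_2\supseteq\dotsb$ descends, and since $\L$ is Artinian it stabilizes at $\L_{i_0}$. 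For $i\ge i_0$ we have $\Q_i\subseteq\Q_i^{\perp\perp}=(\L/\L_{i_0})^*=:\bar\Q$, with the annihilator of $\Q_i$ in $\L/\L_{i_0}$ equal to zero. As $\L/\L_{i_0}$ is Artinian, its maximal semisimple subcomodule ${}_{\D^\ss}(\L/\L_{i_0})$ is finite\+dimensional (an infinite semisimple comodule fails the descending chain condition), so ${}^{\D^\ss}\bar\Q\cong({}_{\D^\ss}(\L/\L_{i_0}))^*$ is finite\+dimensional, and the composite $\Q_i\hookrightarrow\bar\Q\twoheadrightarrow{}^{\D^\ss}\bar\Q$ (``restriction of functionals to the maximal semisimple subcomodule'') is surjective, its image being the subspace of ${}^{\D^\ss}\bar\Q$ whose annihilator in the finite\+dimensional ${}_{\D^\ss}(\L/\L_{i_0})$ is zero. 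Applying the right exact functor ${}^{\D^\ss}(-)$ (the cokernel of a natural transformation to the identity, $\Hom_k(\D/\D^\ss,-)$ being exact over $k$) to $\Q_i\rarrow\bar\Q\rarrow\bar\Q/\Q_i\rarrow0$, and using that ${}^{\D^\ss}\Q_i\rarrow{}^{\D^\ss}\bar\Q$ is onto because $\Q_i\rarrow{}^{\D^\ss}\bar\Q$ is, we get ${}^{\D^\ss}(\bar\Q/\Q_i)=0$, whence $\bar\Q/\Q_i=0$ by the contramodule Nakayama Lemma~\ref{contramodule-nakayama} ($\D/\D^\ss$ being conilpotent). Thus $\Q_i=\bar\Q$ for $i\ge i_0$ and the chain stabilizes.

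For part~(c), let $\L$ be a right $\D$\+comodule with $\L^*$ Noetherian as a left $\D$\+contramodule, and let $\L''$ be an arbitrary quotient comodule of $\L$; I must show $\L''$ is finitely cogenerated. Dualizing $\L\twoheadrightarrow\L''$ yields an embedding $(\L'')^*\hookrightarrow\L^*$ of left $\D$\+contramodules, so $(\L'')^*$ is a subcontramodule of $\L^*$, hence finitely generated. By Lemma~\ref{subcoalgebra-contramod-finitely-generated}(a), ${}^{\D^\ss}((\L'')^*)\cong({}_{\D^\ss}\L'')^*$ is then a finitely generated left $\D^\ss$\+contramodule, where ${}_{\D^\ss}\L''$ is the maximal semisimple subcomodule of $\L''$. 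Decomposing $\D^\ss$ into its simple subcoalgebras $\E$ and comparing the finite\+generation criterion of Lemma~\ref{subcoalgebra-contramod-finitely-generated}(e) for $({}_{\D^\ss}\L'')^*$ with the finite\+cogeneration criterion of (the evident right\+comodule analogue of) Lemma~\ref{subcoalgebra-finitely-cogenerated}(e) for ${}_{\D^\ss}\L''$, one sees these criteria coincide: $k$\+duality carries the $\E$\+isotypic component ${}_\E\L''$ to ${}^\E((\L'')^*)$ preserving the multiplicity of the irreducible, and the multiplicity of the irreducible right $\E$\+comodule in $\E$ equals that of the irreducible left $\E$\+contramodule in $\E^*=\Hom_k(\E,k)$ (both equal $\sqrt{\dim_k\E}$). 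Hence ${}_{\D^\ss}\L''$ is finitely cogenerated, and so $\L''$ is finitely cogenerated by (the right\+comodule analogue of) Lemma~\ref{subcoalgebra-finitely-cogenerated}(d). Therefore $\L$ is co\+Noetherian.

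The steps I expect to require the most care are, in part~(b), the reduction to part~(a) followed by the ``${}^{\D^\ss}$ plus Nakayama'' argument, which hinges on the finite\+dimensionality of the socle of an Artinian comodule and on the surjectivity onto ${}^{\D^\ss}\bar\Q$; and, in part~(c), the bookkeeping matching the finite\+generation condition for the dual contramodule with the finite\+cogeneration condition for the original comodule — in particular the comparison of multiplicities of irreducibles under $k$\+duality and the passage between the left\+ and right\+handed versions of Lemmas~\ref{subcoalgebra-finitely-cogenerated} and~\ref{subcoalgebra-contramod-finitely-generated}.
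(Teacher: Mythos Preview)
Your proof is correct. Part~(a) matches the paper's argument (both rest on the coincidence of finitely generated $\D^*$\+submodules with finitely generated subcontramodules). Parts~(b) and~(c), however, take different routes.

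For the converse in~(b), the paper invokes Proposition~\ref{finitely-co-presented-hom}(b) to see that every finitely generated $\D^*$\+submodule of $\L^*$ is the orthogonal complement of a subcomodule of~$\L$; ascending chains of such submodules therefore correspond to descending chains of subcomodules and stabilize. You instead reduce (via~(a)) to showing $\L^*$ is co\+Artinian as a contramodule, then run a ``pass to the quotient by the stabilized annihilator, restrict to the socle, apply ${}^{\D^\ss}$, invoke Nakayama'' argument. The paper's approach is shorter; yours is more self\+contained in that it does not rely on Proposition~\ref{finitely-co-presented-hom}.

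For~(c), the paper simply asserts that a right $\D$\+comodule $\N$ is finitely cogenerated iff $\N^*$ is a finitely generated contramodule; this has a direct proof (contramodule generators $\phi_1,\dots,\phi_n\in\N^*$ yield a comodule map $g\colon\N\to\D^n$, and $g(x)=0$ forces $(a\cdot\phi_i)(x)=0$ for all $a\in\D^*$, hence $x=0$). You instead unfold this equivalence through the multiplicity criteria of Lemmas~\ref{subcoalgebra-finitely-cogenerated}(e) and~\ref{subcoalgebra-contramod-finitely-generated}(e), which works but is more elaborate than needed. One small slip: your parenthetical ``both equal $\sqrt{\dim_k\E}$'' holds only when $\E^*\cong M_n(D)$ with $D=k$; in general both multiplicities equal~$n$, which is all your argument requires.
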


\begin{proof}
 Part~(a): one notices that a $\D^*$\+module is Noetherian if and
only if any ascending chain of its finitely generated submodules
terminates.
 Similarly, a $\D$\+contramod\-ule is co-Artinian if and only if
any ascending chain of its finitely generated subcontramodules
terminates.
 Finally, the classes of finitely generated $\D^*$\+submodules and
finitely generated $\D$\+subcontramodules in any given
$\D$\+contramodule coincide.

 Part~(b) is again a subset of~\cite[Proposition~2.5]{GTNT}.
 To any descending chain of $\D$\+sub\-comodules in $\L$ one can
assign the ascending chain of their orthogonal complements, which are
$\D^*$\+submodules in~$\L^*$.
 Conversely, in view of Proposition~\ref{finitely-co-presented-hom}(b),
any finitely generated $\D^*$\+submodule in $\L^*$ is the orthogonal
complement to a certain $\D$\+subcomodule in~$\L$.
 Part~(c): for any quotient comodule of $\L$, there is its dual
subcontramodule in~$\L^*$.
 It remains to notice that a right $\D$\+comodule $\N$ is finitely
cogenerated if and only if its dual left $\D$\+contramodule $\N^*$
is finitely generated.
\end{proof}

 A finitely cogenerated left $\C$\+comodule is called \emph{cocoherent}
if every its finitely cogenerated quotient comodule is finitely
copresented.
 Using Lemma~\ref{finitely-presented}(a), one can show that the class
of cocoherent left $\C$\+comodules is closed under the operations of
the passage to the kernels, cokernels, and extensions in the abelian
category $\C\comodl$; so cocoherent left $\C$\+comodules form
an abelian category.

 Analogously, a finitely presented left $\D$\+contramodule is called
\emph{coherent} if every its finitely generated subcontramodule is
finitely presented.
 Using Lemma~\ref{finitely-presented}(b), one shows that the class of
coherent left $\D$\+contramodules is closed under the passages to
the kernels, cokernels, and extensions in the abelian category
$\D\comodl$; so coherent left $\D$\+contramodules form
an abelian category.

\begin{lem} \label{co-contra-coherent}
\textup{(a)} A left\/ $\D$\+contramodule is coherent if and only if its
underlying left\/ $\D^*$\+module is coherent.
 The abelian categories of coherent left\/ $\D$\+contramodules and
coherent left\/ $\D^*$\+modules are isomorphic. \par
\textup{(b)} A right\/ $\D$\+comodule\/ $\L$ is cocoherent if and only
if its dual left\/ $\D^*$\+module\/ $\L^*$ is coherent.
 The abelian categories of cocoherent right\/ $\D$\+comodules and
coherent left\/ $\D^*$\+modules are anti-equivalent.  
\end{lem}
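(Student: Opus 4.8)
The plan is to deduce both parts of the lemma formally from Proposition~\ref{finitely-co-presented-hom}, combined with the several elementary observations collected just before that proposition and in the proof of Lemma~\ref{co-contra-Artinian-Noetherian}. For part~(a), recall three facts that are already available: a left $\D$\+contramodule is finitely generated (resp.\ finitely presented) if and only if its underlying left $\D^*$\+module is; and, as observed in the proof of Lemma~\ref{co-contra-Artinian-Noetherian}(a), the finitely generated $\D$\+subcontramodules of a given left $\D$\+contramodule are exactly its finitely generated $\D^*$\+submodules. Under this dictionary the condition defining a coherent $\D$\+contramodule (being finitely presented, with all of its finitely generated subcontramodules finitely presented) passes term by term into the condition defining a coherent left $\D^*$\+module, which proves the first assertion of~(a). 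For the isomorphism of categories, I would invoke Proposition~\ref{finitely-co-presented-hom}(a), according to which the forgetful functor $\D\contra\rarrow\D^*\modl$ restricts to an isomorphism between the additive category of finitely presented left $\D$\+contramodules and the additive category of finitely presented left $\D^*$\+modules; passing to the full subcategories of coherent objects on either side --- which correspond to each other under this isomorphism by what has just been said --- yields the required isomorphism of abelian categories (both categories are abelian: the first one as recorded before the lemma, the second by the standard closure properties of coherent modules), and an isomorphism of categories is automatically exact.

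For part~(b), the crux is a bijection between the finitely cogenerated quotient comodules of a right $\D$\+comodule $\L$ and the finitely generated left $\D^*$\+submodules of the dual vector space $\L^*$. In one direction, a quotient comodule $\L\twoheadrightarrow\N$ with kernel $\K$ dualizes to a short exact sequence $0\rarrow\N^*\rarrow\L^*\rarrow\K^*\rarrow0$ of left $\D^*$\+modules, exhibiting $\N^*$ as a $\D^*$\+submodule of $\L^*$; and $\N$ is finitely cogenerated precisely when $\N^*$ is finitely generated --- over $\D$ as a contramodule, equivalently over $\D^*$ as a module (see the proof of Lemma~\ref{co-contra-Artinian-Noetherian}(c)). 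In the other direction, a finitely generated left $\D^*$\+submodule $M\subset\L^*$ is the image of some $\D^*$\+linear map $(\D^{\oplus n})^*\rarrow\L^*$; since $(\D^{\oplus n})^*\simeq(\D^*)^{\oplus n}$ and $\D^{\oplus n}$ is a finitely cogenerated cofree, hence finitely copresented, right $\D$\+comodule, Proposition~\ref{finitely-co-presented-hom}(b) identifies this map with the dual $\psi^*$ of a right $\D$\+comodule morphism $\psi\:\L\rarrow\D^{\oplus n}$, and then the image of $\psi^*$ equals $\N^*$ for the quotient comodule $\N=\L/\ker\psi$. Combining this bijection with the fact that a finitely cogenerated quotient comodule $\N$ of $\L$ is finitely copresented if and only if $\N^*$ is a finitely presented left $\D^*$\+module (the forward implication by Proposition~\ref{finitely-co-presented-hom}(a), the reverse one by embedding $\N$ into a finitely cogenerated cofree comodule and invoking the facts recalled above), we conclude that $\L$ is cocoherent --- that is, finitely cogenerated with every finitely cogenerated quotient comodule finitely copresented --- if and only if $\L^*$ is finitely generated with every finitely generated $\D^*$\+submodule finitely presented, i.e.\ if and only if $\L^*$ is a coherent left $\D^*$\+module.

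It remains to upgrade this to the claimed anti-equivalence of categories. A cocoherent comodule is in particular finitely copresented (apply the defining condition to the identity quotient), so the anti-equivalence between finitely copresented right $\D$\+comodules and finitely presented left $\D^*$\+modules furnished by Proposition~\ref{finitely-co-presented-hom}(a) restricts, in view of the equivalence of conditions just proved, to an anti-equivalence between cocoherent right $\D$\+comodules and coherent left $\D^*$\+modules. I expect the only step here that is not a routine unwinding of definitions to be the second half of the bijection in part~(b) --- the assertion that every finitely generated $\D^*$\+submodule of $\L^*$ is ``closed'', of the form $\N^*$ for a quotient comodule $\N$ of $\L$ --- and that is precisely where Proposition~\ref{finitely-co-presented-hom}(b) is needed.
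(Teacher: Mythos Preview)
Your proof is correct and follows essentially the same approach as the paper's: both reduce the category statements to Proposition~\ref{finitely-co-presented-hom}(a) and reduce the first assertions to the bijection between finitely generated $\D^*$\+submodules (resp.\ $\D$\+subcontramodules) and the relevant subobjects, together with the matching of finite (co)presentation under the dictionary. You have simply spelled out in detail the bijection in part~(b) that the paper states without further comment, correctly invoking Proposition~\ref{finitely-co-presented-hom}(b) (as already done in the proof of Lemma~\ref{co-contra-Artinian-Noetherian}(b)) for the nontrivial direction.
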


\begin{proof}
 In view of Proposition~\ref{finitely-co-presented-hom}(a), it suffices
to check the first assertion in each of the parts~(a) and~(b).
 In part~(a), one uses the bijection between finitely generated
$\D$\+subcontramodules and finitely generated $\D^*$\+submodules of
a given $\D$\+contra\-module, together with the fact that
a $\D$\+contramodule is finitely presented if and only if it is
finitely presented as a $\D^*$\+module.
 In part~(b), one uses the bijection between finitely cogenerated
quotient $\D$\+comodules of $\L$ and finitely generated
$\D^*$\+submodules of $\L^*$, together with the fact that
a $\D$\+comodule is finitely copresented if and only if its dual
$\D^*$\+module is finitely presented.
\end{proof}

 A coalgebra $\C$ is called \emph{left co-Noetherian} if any quotient
comodule of a finitely cogenerated left $\C$\+comodule is finitely
cogenerated, or equivalently, if the left $\C$\+comod\-ule $\C$ is
co-Noetherian~\cite[Theorem~3]{WW}.
 Over a left co-Noetherian coalgebra $\C$, finitely cogenerated left
comodules form an abelian category.
 By Lemma~\ref{subcoalgebra-finitely-cogenerated}(c), any subcoalgebra
of a left co-Noetherian coalgebra is left co-Noetherian.
 Any cosemisimple coalgebra is left and right co-Noetherian.

 A coalgebra $\D$ is called \emph{right Artinian} if any finitely
cogenerated right $\D$\+comodule is Artinian, or equivalently, if
the right $\D$\+comodule $\D$ is Artinian, or if any finitely
generated left $\D$\+contramodule is co-Artinian, or if the left
$\D$\+contramodule $\D^*=\Hom_k(\D,k)$ is co-Artinian (see
Lemma~\ref{co-contra-Artinian-Noetherian}(a\+b) for a proof of
the equivalence between the second and the fourth of these conditions).
 A coalgebra $\D$ is right Artinian if and only if its dual algebra
$\D^*$ is left Noetherian.
 Any subcoalgebra of a right Artinian coalgebra is right Artinian.

 According to Lemma~\ref{co-Noetherian-Artinian} and
Example~\ref{co-Noetherian-Artinian-counterex}, any left Artinian
coalgebra $\C$ is left co-Noetherian, but the converse is not
generally true.
 More precisely, a coalgebra $\C$ is left Artinian if and only if it is
left co-Noetherian and its maximal cosemisimple subcoalgebra
$\C^\ss\subset\C$ is finite-dimensional.
 
\begin{exs} \label{dual-to-taylor-series}
 The functor $\C\longmapsto\C^*$ is an anti-equivalence between
the category of coassociative coalgebras and the category of
pro-finite-dimensional topological associative algebras, so one
can describe coalgebras in terms of their dual topological algebras.
 In particular, the topological algebra of formal Taylor power series
in commuting variables $k[[z_1,\dotsc,z_m]]$ corresponds to a certain
cocommutative coalgebra~$\C$.
 The algebra $k[[z_1,\dotsc,z_m]]$ is Noetherian, so the coalgebra
$\C$ is Artinian.
 Hence all the subcoalgebras of $\C$ are Artinian (and consequently,
co-Noetherian), too.
 These are precisely the coalgebras dual to the topological algebras
of functions on the formal completions of algebraic varieties
over~$k$ at their closed points defined over~$k$.
 Given a field extension $k\subset\ell$, a coalgebra $\C$ over
the field~$k$ is Artinian or co-Noetherian whenever the coalgebra
$\ell\ot_k\C$ over the field~$\ell$ is.
 Hence it follows that all the coalgebras dual to the topological
algebras of functions on the formal completions of varieties over~$k$
at their closed points are Artinian.

 Moreover, there are many noncocommutative Artinian coalgebras, like,
e.~g., the coalgebra dual to the algebra of quantum formal power series
$k\{\{z_1,\dotsb,z_m\}\}$ with the relations $z_iz_j=q_{i,j}z_jz_i$
for all $i<j$, with any constants $q_{i,j}\in k^*$.
\end{exs}

 A coalgebra $\D$ is called \emph{right cocoherent} if any finitely
cogenerated quotient comodule of a finitely copresented right
$\D$\+comodule is finitely copresented, or equivalently, if the right
$\D$\+comodule $\D$ is cocoherent.
 Equivalently, a coalgebra $\D$ is right cocoherent if any finitely
generated subcontramodule of a finitely presented left
$\D$\+contramodule is finitely presented, or if the left
$\D$\+contramodule $\D^*$ is coherent.
 Over a right cocoherent coalgebra $\D$, both the finitely copresented
right $\D$\+comodules and the finitely presented left
$\D$\+contramodules form abelian categories.
 A coalgebra $\D$ is right cocoherent if and only if its dual
algebra $\D^*$ is left coherent (see Lemma~\ref{co-contra-coherent}).
 Any left co-Noetherian coalgebra $\C$ is left cocoherent, and any
finitely cogenerated left $\C$\+comodule is finitely copresented.

\bigskip

 The \emph{contratensor product} $\N\ocn_\D\P$ of a right
$\D$\+comodule $\N$ and a left $\D$\+contra\-module $\P$
\cite[Section~3.1]{Prev} is a $k$\+vector space constructed as
the cokernel of (the difference of) the pair of maps
$$
 \N\ot_k\Hom_k(\D,\P)\birarrow\N\ot_k\P,
$$
one which is induced by the $\D$\+contraaction in $\P$, while the other
one is the composition $\N\ot_k\Hom_k(\D,\P)\rarrow\N\ot_k\D\ot_k
\Hom_k(\D,\P)\rarrow\N\ot_k\P$ of the map induced by the right
$\D$\+coaction map $\N\rarrow\N\ot_k\D$ and the map induced by
the evaluation map $\D\ot_k\Hom_k(\D,\P)\rarrow\P$.
 The functor of contratensor product of comodules and contramodules
over a coalgebra~$\D$ is right exact.

 For any right $\D$\+comodule $\N$ and a $k$\+vector space $V$ there is
a natural isomorphism of $k$\+vector spaces
$$
 \N\ocn_\D\Hom_k(\D,V)\simeq\N\ot_kV,
$$
while for any right $\D$\+comodule $\N$, any left $\D$\+contramodule
$\P$, and a $k$\+vector space $V$ there is a natural isomorphism of
$k$\+vector spaces
$$
 \Hom_k(\N\ocn_\D\P\;V)\simeq\Hom^\D(\P,\Hom_k(\N,V)).
$$

 The \emph{cotensor product} $\N\oc_\C\M$ of a right $\C$\+comodule
$\N$ and a left $\C$\+comodule $\M$ \cite[Sections~2.5\+-6]{Prev} is
a $k$\+vector space constructed as the kernel of the pair of maps
$$
 \N\ot_k\M\birarrow\N\ot_k\C\ot_k\M,
$$
one of which is induced by the right $\C$\+coaction in $\N$ and
the other one by the left $\C$\+coaction in~$\M$.
 The functor of cotensor product of comodules over a coalgebra $\C$
is left exact.

 For any right $\C$\+comodule $\N$, left $\C$\+comodule $\M$, and
$k$\+vector space $V$ there are natural isomorphisms of $k$\+vector
spaces
$$
 \N\oc_\C(\C\ot_k V)\simeq \N\ot_k V
 \quad\text{and}\quad
 (V\ot_k\C)\oc_\C\M \simeq V\ot_k \M.
$$
 For any left $\C$\+comodule $\M$ and any subcoalgebra $\E\subset\C$
there is a natural isomorphism of left $\E$\+comodules
$$
 {}_\E\M\simeq\E\oc_\C\M,
$$
where the left $\E$\+comodule structure on the cotensor product is
induced by the left $\E$\+comodule structure on~$\E$.

 The $k$\+vector space of \emph{cohomomorphisms} $\Cohom_\D(\M,\P)$
from a left $\C$\+comodule $\M$ to a left $\D$\+contramodule $\P$ is
a $k$\+vector space constructed as the cokernel of the pair of maps
$$
 \Hom_k(\D\ot_k\M\;\P)\simeq\Hom_k(\M,\Hom_k(\D,\P))
 \birarrow\Hom_k(\M,\P),
$$
one of which is induced by the left $\D$\+coaction in $\M$ and
the other one by the left $\D$\+contraaction in $\P$.
 The functor of cohomomorphisms from left comodules to left
contramodules over a coalgebra $\D$ is right exact.

 For any left $\D$\+comodule $\M$, left $\D$\+contramodule $\P$, and
$k$\+vector space $V$ there are natural isomorphisms of $k$\+vector
spaces
$$
 \Cohom_\D(\D\ot_kV\;\P)\simeq\Hom_k(V,\P)
 \ \ \text{and}\ \ 
 \Cohom_\D(\M,\Hom_k(\D,V))\simeq\Hom_k(\M,V).
$$
 For any left $\D$\+contramodule $\P$ and any subcoalgebra
$\E\subset\D$ there is a natural isomorphism of left
$\E$\+contramodules
$$
 {}^\E\P\simeq\Cohom_\D(\E,\P),
$$
where the left $\E$\+contramodule structure on the Cohom space is
induced by the right $\E$\+comodule structure on~$\E$.

\begin{lem} \label{tensor-contratensor}
 For any right\/ $\D$\+comodule $\N$ and any left\/
$\D$\+contramodule\/ $\P$ there is a natural surjective map of
$k$\+vector spaces from the tensor product over the algebra\/
$\D^*$ to the contratensor product over the coalgebra\/~$\D$
$$
 \N\ot_{\D^*}\P\lrarrow\N\ocn_\D\P.
$$
 This map is an isomorphism, at least, whenever either \par
\textup{(a)} the left\/ $\D$\+contramodule\/ $\P$ is finitely presented,
or \par
\textup{(b)} the coalgebra\/ $\D$ is left co-Noetherian.
\end{lem}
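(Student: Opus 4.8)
The plan is to build the map as an induced morphism of cokernels, and then to check it is an isomorphism: in case~(a) by reduction to finitely generated free contramodules, and in case~(b) by reducing the comodule argument to finite dimension and the contramodule argument to free contramodules, so that everything comes down to one identification of $\D^*$\+modules into which the co-Noetherian hypothesis feeds. For the construction: by definition $\N\ot_{\D^*}\P$ and $\N\ocn_\D\P$ are cokernels of pairs of parallel maps with the common target $\N\ot_k\P$, the sources being $\N\ot_k\D^*\ot_k\P$ and $\N\ot_k\Hom_k(\D,\P)$ respectively. The evaluation map $\alpha\:\D^*\ot_k\P\rarrow\Hom_k(\D,\P)$ sending $\xi\ot p$ to the function $d\mapsto\xi(d)p$ induces a map $\N\ot_k\D^*\ot_k\P\rarrow\N\ot_k\Hom_k(\D,\P)$ over $\N\ot_k\P$; that this map intertwines the two parallel pairs amounts to exactly two commutativity squares, which are precisely the definitions of the underlying $\D^*$\+module structures on $\P$ (via the $\D$\+contraaction and~$\alpha$) and on $\N$ (via the right $\D$\+coaction). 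Passing to cokernels yields the natural map $\N\ot_{\D^*}\P\rarrow\N\ocn_\D\P$; it is surjective because the image of the first parallel pair is contained in the image of the second, so the projection $\N\ot_k\P\rarrow\N\ocn_\D\P$ factors through the projection $\N\ot_k\P\rarrow\N\ot_{\D^*}\P$.

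For part~(a), the functors $\N\ot_{\D^*}(-)$ and $\N\ocn_\D(-)$ on $\D\contra$ are both right exact, so by choosing a presentation $\Hom_k(\D,V_1)\rarrow\Hom_k(\D,V_0)\rarrow\P\rarrow0$ of the finitely presented contramodule~$\P$ by finitely generated free contramodules, applying both functors, and invoking the five lemma, one reduces to $\P=\Hom_k(\D,V)$ with $\dim_k V<\infty$. For such~$\P$ the map $\alpha$ is an isomorphism $\D^*\ot_k V\simeq\Hom_k(\D,V)$, so $\Hom_k(\D,V)$ is simultaneously a finitely generated free left $\D$\+contramodule and a finitely generated free left $\D^*$\+module; hence $\N\ot_{\D^*}\Hom_k(\D,V)\simeq\N\ot_k V$, which is carried by the map in question onto $\N\ocn_\D\Hom_k(\D,V)\simeq\N\ot_k V$ (the natural isomorphism for contratensor products recalled above), the map being this very identification. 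No assumption on~$\D$ is used here.

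For part~(b), both functors commute with filtered colimits in the comodule argument --- the contratensor product because it is the cokernel of a pair of maps between functors obtained from~$\N$ by applying $\ot_k$ --- and every right $\D$\+comodule is the filtered union of its finite-dimensional subcomodules, so I may assume $\N$ finite-dimensional; then $\N$ is a comodule over some finite-dimensional subcoalgebra $\D'\subseteq\D$, equivalently a discrete module over its dual algebra $\D'^*$. Reducing in the contramodule argument --- now to arbitrary free contramodules $\Hom_k(\D,V)$, by the same right-exactness argument as in part~(a) --- it remains to prove $\N\ot_{\D^*}\Hom_k(\D,V)\simeq\N\ot_k V$ for an arbitrary $k$\+vector space~$V$. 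The right-hand side is $\N\ocn_\D\Hom_k(\D,V)$. For the left-hand side, since the $\D^*$\+action on~$\N$ factors through $\D^*\twoheadrightarrow\D'^*=\D^*/(\D')^\perp$, one has $\N\ot_{\D^*}\Hom_k(\D,V)\simeq\N\ot_{\D'^*}\bigl(\Hom_k(\D,V)/(\D')^\perp\cdot\Hom_k(\D,V)\bigr)$; so everything reduces to the identification $\Hom_k(\D,V)\big/(\D')^\perp\cdot\Hom_k(\D,V)\simeq\Hom_k(\D',V)=\D'^*\ot_k V$, after which $\N\ot_{\D'^*}(\D'^*\ot_k V)\simeq\N\ot_k V$ concludes.

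This identification is the crux, and it is where left co-Noetherianness is used. The restriction map $\Hom_k(\D,V)\rarrow\Hom_k(\D',V)$ is surjective and annihilates $(\D')^\perp\cdot\Hom_k(\D,V)$ --- the latter because $\D'$ is a subcoalgebra, so $\Delta(\D')\subseteq\D'\ot_k\D'$ and the $\D^*$\+action of a functional vanishing on~$\D'$ again produces a function vanishing on~$\D'$ --- so the whole point is the reverse inclusion: that every $V$\+valued linear function on~$\D$ vanishing on~$\D'$ is a \emph{finite} combination $\sum_i\xi_i\cdot g_i$ with $\xi_i\in(\D')^\perp$ and $g_i\in\Hom_k(\D,V)$ under the $\D^*$\+action on the free contramodule $\Hom_k(\D,V)$. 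For finite-dimensional~$V$ (or finite-dimensional $\D/\D'$) this is immediate --- essentially the computation of part~(a) --- and in general I expect to deduce it from the fact, recorded in this section, that over a left co-Noetherian coalgebra every finitely cogenerated left comodule is finitely copresented, applied to the subquotients of the left $\D$\+comodule~$\D$ that approximate $\D/\D'$, so as to reduce the contraaction on $\Hom_k(\D,V)$ to its ``finitely supported'' part. Making this last step precise, and confirming it uses nothing beyond left co-Noetherianness, is the main obstacle; everything preceding it is routine.
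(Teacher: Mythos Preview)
Your construction of the map and your proof of part~(a) are correct and essentially match the paper's approach; your argument for~(a) is in fact slightly more direct, since the paper dualizes and invokes Proposition~\ref{finitely-co-presented-hom}(b), whereas you reduce straight to finitely generated free contramodules by right exactness.

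Part~(b), however, has a genuine gap at exactly the point you flag as the ``main obstacle''. You have correctly reduced to showing
\[
 (\D')^*\ot_{\D^*}\P \,\simeq\, {}^{\D'}\P
 \qquad\bigl(\text{equivalently,}\ \
 \Hom_k(\D,V)\big/(\D')^\perp\!\cdot\!\Hom_k(\D,V)\simeq\Hom_k(\D',V)\bigr)
\]
for a finite-dimensional subcoalgebra $\D'\subset\D$, but your closing paragraph only gestures at ``subquotients of~$\D$ that approximate $\D/\D'$'' without an actual argument. The paper's proof fills this gap cleanly, and the mechanism is simpler than what you suggest: one uses the left co-Noetherian hypothesis to produce a finite \emph{copresentation} of $\D'$ itself as a left $\D$\+comodule,
\[
 0 \lrarrow \D' \lrarrow \J_0 \lrarrow \J_1
\]
with $\J_0$, $\J_1$ finitely cogenerated cofree (take $\J_0=\D$ and embed the finitely cogenerated quotient $\D/\D'$ into a finitely cogenerated cofree~$\J_1$). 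Then one compares the two right exact functors $\M\longmapsto\M^*\ot_{\D^*}\P$ and $\M\longmapsto\Cohom_\D(\M,\P)$ of a left $\D$\+comodule~$\M$: on a finitely cogenerated cofree comodule $\J=\D\ot_kW$ with $\dim_kW<\infty$ both give $W^*\ot_k\P$, and right exactness in~$\M$ then forces agreement on~$\D'$. This is the whole argument; no approximation procedure or ``finitely supported'' reduction is needed.

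Note also that your detour through free~$\P$ in part~(b) is unnecessary: the paper runs the argument above for \emph{arbitrary}~$\P$, so after reducing $\N$ to an $\E$\+comodule one is immediately done. Your reduction is not wrong, but it adds a step and still leaves you with the same identification to prove.
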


\begin{proof}
 To construct the surjective $k$\+linear map in question, one notices
that the tensor product $\N\ot_{\D^*}\P$ is the cokernel of a natural
map $\N\ot_k\D^*\ot_k\P\rarrow\N\ot_k\P$, while the contratensor
product $\N\oc_\D\P$ is the cokernel of a map
$\N\ot_k\Hom_k(\D,\P)\rarrow\N\ot_k\P$.
 These two maps form a commutative diagram with the natural embedding
$$
 \N\ot_k\D^*\ot_k\P\lrarrow\N\ot_k\Hom_k(\D,\P).
$$
 To prove part~(a), one considers the induced map of the dual
vector spaces
$$
 (\N\oc_\D\P)^*\simeq\Hom^\D(\P,\N^*)\lrarrow
 \Hom_{\D^*}(\P,\N^*)\simeq(\N\ot_{\D^*}\P)^*
$$
and applies Proposition~\ref{finitely-co-presented-hom}.

 To prove part~(b), notice that any right $\D$\+comodule $\N$ is
the union of its maximal $\E$\+subcomodules $\N_\E$ over all
the finite-dimensional subcoalgebras $\E\subset\D$.
 Since both the tensor and the contratensor products preserve
inductive limits in their first arguments, it suffices to
consider the case of a right $\E$\+comodule $\N=\N_\E$.
 Then one has $\N\ot_{\D^*}\P\simeq\N\ot_{\E^*}(\E^*\ot_{\D^*}\P)$
and $\N\ocn_\D\P\simeq\N\ot_{\E^*}{}^\E\P$, so it remains to show
that the natural map
$$
 \E^*\ot_{\D^*}\P\lrarrow
 {}^\E\P\simeq\E^*\ocn_\D\P\simeq\Cohom_\D(\E,\P)
$$
is an isomorphism.
 For this purpose, one presents the left $\D$\+comodule $\E$ as
the kernel of a morphism of finitely cogenerated cofree left
$\D$\+comodules and uses the right exactness property of
the functor $\Cohom_\D$ together with the natural isomorphism
$$
 \J^*\ot_{\D^*}\P=(V^*\ot_k\D^*)\ot_{\D^*}\P\.\simeq\.
 \Cohom_\D(\D\ot_kV\;\P)=\Cohom_\D(\J,\P)
$$
for a finitely cogenerated cofree left $\D$\+comodule $\J=\D\ot_kV$
and any left $\D$\+contra\-module~$\P$.
\end{proof}

\Section{MGM Duality for Coalgebras}
\label{dedualizing-bicomodules}

 We start with several constructions and lemmas related to complexes of
comodules and contramodules.
 These are purported to clear way to our key definition of
a \emph{dedualizing complex} of bicomodules over a pair of
cocoherent coalgebras.

 Let $\C$ and $\D$ be two coassociative coalgebras (with counits)
over the same field~$k$.
 Given a derived category symbol $\st=\b$, $+$, $-$, $\varnothing$,
$\abs+$, $\abs-$, or~$\abs$, we denote by $\sD^\st(\C\comodl)$ and
$\sD^\st(\D\contra)$ the corresponding (conventional or absolute)
derived categories of the abelian categories $\C\comodl$ and
$\D\contra$ of left $\C$\+comodules and left $\D$\+contramodules
(see~\cite[Appendix~A]{Pcosh} or~\cite[Appendix~A]{Pmgm}
for the definitions).

 For any subcoalgebra $\E$ in a coalgebra $\C$, the maximal
$\E$\+subcomodule functor $\M\longmapsto{}_\E\M$ acting from
the category of left $\C$\+comodules to the category of left
$\E$\+comodules is left exact.
 The abelian category $\C\comodl$ has enough injective objects, which
are precisely the direct summands of cofree $\C$\+comodules.
 So one can identify the bounded below derived category
$\sD^+(\C\comodl)$ with the homotopy category of injective
$\C$\+comodules $\Hot^+(\C\comodl_\inj)$ and, applying the functor
$\M\longmapsto{}_\E\M$ to complexes of injective $\C$\+comodules
termwise, obtain the right derived functor
$$
 \M^\bu\longmapsto{}_\E^\boR\M^\bu\:\,
 \sD^+(\C\comodl)\lrarrow\sD^+(\E\comodl).
$$

\begin{lem}  \label{finitely-cogenerated-complex}
 Let\/ $\C$ be a left co-Noetherian coalgebra and\/ $\E\subset\C$ be
a subcoalgebra such that the quotient coalgebra without counit\/
$\C/\E$ is conilpotent.
 Then a complex\/ $\L^\bu\in\sD^+(\C\comodl)$ has finitely
cogenerated\/ $\C$\+comodules of cohomology if and only if
the complex\/ ${}_\E^\boR\L^\bu\in\sD^+(\E\comodl)$ has
finitely cogenerated\/ $\E$\+comodules of cohomology.
\end{lem}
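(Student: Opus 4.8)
The plan is to use the truncation triangles in $\sD^+(\C\comodl)$ together with the fact that the right derived functor ${}_\E^\boR$ is triangulated and, being the derived functor of the left exact functor ${}_\E$ computed via bounded below injective resolutions, takes $\sD^{\ge a}(\C\comodl)$ into $\sD^{\ge a}(\E\comodl)$. Throughout one uses that $\E$ is again left co-Noetherian by Lemma~\ref{subcoalgebra-finitely-cogenerated}(c), so that finitely cogenerated left $\E$\+comodules are closed under subobjects, quotients, and extensions in $\E\comodl$, and that, by Lemma~\ref{subcoalgebra-finitely-cogenerated}(d), a cohomology comodule $H^i(\L^\bu)$ is finitely cogenerated over $\C$ if and only if ${}_\E H^i(\L^\bu)$ is finitely cogenerated over $\E$.

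The crucial preliminary step is to show that for a single finitely cogenerated left $\C$\+comodule $\L$, placed in cohomological degree~$0$, the complex ${}_\E^\boR\L$ has finitely cogenerated $\E$\+comodules of cohomology. Since $\C$ is left co-Noetherian, $\L$ is finitely copresented, and the idea is to build a right resolution $0\to\L\to\J^0\to\J^1\to\dotsb$ of $\L$ by finitely cogenerated cofree left $\C$\+comodules $\J^i=\C^{\oplus n_i}$: at each stage one embeds the current finitely cogenerated (hence, by left co-Noetherianness, finitely copresented) comodule into some $\C^{\oplus n}$ and invokes Lemma~\ref{finitely-presented}(a) to see that the cokernel is again finitely cogenerated. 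Applying ${}_\E$ termwise and using ${}_\E(\C\ot_kV)\simeq\E\ot_kV$, the complex ${}_\E\J^\bu$, which computes ${}_\E^\boR\L$ because the $\J^i$ are injective, consists of finitely cogenerated cofree $\E$\+comodules, so its cohomology is finitely cogenerated over the co-Noetherian coalgebra~$\E$. I expect this step, and specifically the verification that the resolution can be kept finitely cogenerated, to be the main technical point of the argument; it is precisely where the co-Noetherian hypothesis (via ``finitely cogenerated $=$ finitely copresented'' and Lemma~\ref{finitely-presented}) is needed.

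For the ``only if'' direction, fix a degree~$n$. The canonical truncation $\tau_{\le n}\L^\bu\to\L^\bu$ is an isomorphism on $H^{\le n}$, and applying the triangulated functor ${}_\E^\boR$, whose value on the cone $\tau_{\ge n+1}\L^\bu$ lies in $\sD^{\ge n+1}$, shows that ${}_\E^\boR\tau_{\le n}\L^\bu\to{}_\E^\boR\L^\bu$ is an isomorphism on $H^{\le n}$. Now $\tau_{\le n}\L^\bu$ has only finitely many nonzero cohomology comodules, all finitely cogenerated over $\C$; peeling them off one at a time by truncation triangles and applying the preliminary step to each shifted cohomology comodule, one obtains by induction that ${}_\E^\boR\tau_{\le n}\L^\bu$ has finitely cogenerated cohomology (using closure of finitely cogenerated $\E$\+comodules under subobjects, quotients, and extensions). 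Hence $H^n({}_\E^\boR\L^\bu)$ is finitely cogenerated, and since $n$ was arbitrary this direction is complete.

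For the ``if'' direction, assume ${}_\E^\boR\L^\bu$ has finitely cogenerated $\E$\+cohomology; we may assume $\L^\bu$ is not acyclic, and let $m$ be the least degree with $H^m(\L^\bu)\ne0$. Then $\tau_{\le m}\L^\bu=H^m(\L^\bu)[-m]$, and the triangle ${}_\E^\boR(H^m(\L^\bu)[-m])\to{}_\E^\boR\L^\bu\to{}_\E^\boR\tau_{\ge m+1}\L^\bu\to[1]$, combined with ${}_\E^\boR$ preserving $\sD^{\ge m+1}$ and the left exactness of ${}_\E$, yields $H^m({}_\E^\boR\L^\bu)\simeq{}_\E H^m(\L^\bu)$. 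Thus ${}_\E H^m(\L^\bu)$ is finitely cogenerated, so $H^m(\L^\bu)$ is finitely cogenerated by Lemma~\ref{subcoalgebra-finitely-cogenerated}(d); feeding this back, the same triangle (whose first term has finitely cogenerated cohomology by the preliminary step) shows that ${}_\E^\boR\tau_{\ge m+1}\L^\bu$ again has finitely cogenerated cohomology. Iterating with $\tau_{\ge m+1}\L^\bu$ in place of $\L^\bu$, for any fixed degree~$n$ one concludes after finitely many steps that $H^n(\L^\bu)$ is finitely cogenerated, which finishes the proof.
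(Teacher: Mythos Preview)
Your proof is correct and follows essentially the same approach as the paper: establish the preliminary step for a single finitely cogenerated $\C$\+comodule by resolving it by finitely cogenerated cofree comodules and applying Lemma~\ref{subcoalgebra-finitely-cogenerated}(a), then proceed by induction on the cohomological degree using Lemma~\ref{subcoalgebra-finitely-cogenerated}(d) and the closure properties of finitely cogenerated $\E$\+comodules. Your detour through Lemma~\ref{finitely-presented}(a) in the preliminary step is unnecessary, since the cokernel of an embedding $\L\hookrightarrow\C^{\oplus n}$ is a quotient of $\C^{\oplus n}$ and hence finitely cogenerated directly by the left co-Noetherian hypothesis; otherwise your argument simply spells out in full the truncation-triangle induction that the paper leaves implicit.
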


\begin{proof}
 Notice that the cohomology $\E$\+comodules of the complex
${}_\E^\boR\L$ are finitely cogenerated for any finitely cogenerated
left $\C$\+comodule $\L$ (viewed as a one-term complex of left
$\C$\+comodules).
 Indeed, one can compute the derived category object ${}_\E^\boR\L$
using a right resolution of the $\C$\+comodule $\L$ by finitely
cogenerated cofree left $\C$\+comodules (which exists since the class
of finitely cogenerated left comodules over a left co-Noetherian
coalgebra $\C$ is closed under the passages to the cokernels of
morphisms) and apply Lemma~\ref{subcoalgebra-finitely-cogenerated}(a).
 Since the class of finitely cogenerated left $\E$\+comodules is
also closed under the kernels, cokernels, and extensions,
the desired assertion now follows by induction in the cohomological
degree from Lemma~\ref{subcoalgebra-finitely-cogenerated}(d).
\end{proof}

 We recall from Section~\ref{coalgebra-finiteness} that finitely
copresented left comodules over a left cocoherent coalgebra $\C$ form
an abelian category.
 Notice that this abelian category has enough injective objects, which
are precisely the direct summands of finitely cogenerated cofree
$\C$\+comodules.

\begin{lem}  \label{finitely-cogenerated-cofree-complex}
 Let\/ $\C$ be a left cocoherent coalgebra, and let\/ $\L^\bu$ be
a bounded below complex of left\/ $\C$\+comodules with finitely
copresented cohomology modules.
 Then there exists a bounded below complex of finitely cogenerated
cofree left\/ $\C$\+comodules\/ $\J^\bu$ together with
a quasi-isomorphism of complexes of left\/ $\C$\+comodules\/
$\L^\bu\rarrow\J^\bu$.
\end{lem}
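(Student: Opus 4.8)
The plan is to construct the complex $\J^\bu$ and the quasi-isomorphism $\phi^\bu\:\L^\bu\rarrow\J^\bu$ simultaneously, by induction on the cohomological degree, imitating the classical construction of an injective resolution of a bounded below complex, but arranging at every step that the term being added is a finitely cogenerated cofree $\C$\+comodule. After a shift we may assume $\L^n=0$ for $n<0$, and we shall have $\J^n=0$ for $n<0$ as well. Write $d_\L$ for the differential of $\L^\bu$ and set $Z^n=\ker(d_\L\:\L^n\rarrow\L^{n+1})$.

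Suppose that for some $n\ge0$ we have already produced finitely cogenerated cofree $\C$\+comodules $\J^0$,~\dots, $\J^{n-1}$ together with $\C$\+comodule maps $\phi^m\:\L^m\rarrow\J^m$ and $d_\J^{m-1}\:\J^{m-1}\rarrow\J^m$ for $0\le m\le n-1$, forming a morphism of complexes in these degrees. Let $T^{n-1}\subset Z^n\oplus\J^{n-1}$ be the image of the map $\L^{n-1}\oplus\J^{n-2}\rarrow Z^n\oplus\J^{n-1}$ sending $(a',b')$ to $(-d_\L a',\,\phi^{n-1}(a')+d_\J^{n-2}b')$, and put $W^{n-1}=(Z^n\oplus\J^{n-1})/T^{n-1}$, with the convention $W^{-1}=H^0(\L^\bu)$ at the bottom. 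There are natural maps $Z^n\rarrow W^{n-1}$ and $\J^{n-1}\rarrow W^{n-1}$, and a short computation shows that the kernel of the latter, regarded as a subcomodule of $\J^{n-1}$, is $W^{n-2}$; hence there is a short exact sequence
$$
 0\rarrow\J^{n-1}/W^{n-2}\rarrow W^{n-1}\rarrow H^n(\L^\bu)\rarrow0 .
$$
The $n$\+th step then consists in choosing a monomorphism $W^{n-1}\hookrightarrow\J^n$ into a finitely cogenerated cofree $\C$\+comodule $\J^n$, taking $d_\J^{n-1}$ to be the composition $\J^{n-1}\rarrow W^{n-1}\hookrightarrow\J^n$, and taking $\phi^n\:\L^n\rarrow\J^n$ to be any $\C$\+comodule map whose restriction to $Z^n\subset\L^n$ is the composition $Z^n\rarrow W^{n-1}\hookrightarrow\J^n$ (such a map exists since $\J^n$ is an injective object of $\C\comodl$). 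A routine computation with the mapping cone of the resulting morphism $\phi^\bu$ shows that $\phi^\bu$ is a quasi-isomorphism: the cone is concentrated in degrees $\ge-1$, and its cohomology in any degree $n-1$ (for $n\ge0$) is exactly the kernel of the monomorphism $W^{n-1}\hookrightarrow\J^n$, hence vanishes.

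The point at which the hypotheses enter is the need, in order to run the induction, for $W^{n-1}$ to embed into a finitely cogenerated cofree comodule, i.e.\ to be finitely cogenerated; in fact I would carry along the stronger inductive hypothesis that $W^{n-1}$ is finitely copresented. Indeed, $H^n(\L^\bu)$ is finitely copresented by assumption; the comodule $\J^{n-1}/W^{n-2}$ is the cokernel of an injective morphism from the finitely copresented comodule $W^{n-2}$ to the finitely cogenerated comodule $\J^{n-1}$, so it is finitely cogenerated by Lemma~\ref{finitely-presented}(a), and it is therefore finitely copresented, being a finitely cogenerated quotient comodule of the (finitely copresented, finitely cogenerated cofree) comodule $\J^{n-1}$ over the left cocoherent coalgebra $\C$. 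Since an extension of finitely copresented $\C$\+comodules is finitely copresented, the short exact sequence above shows that $W^{n-1}$ is finitely copresented, which closes the induction; at the bottom, $W^{-1}=H^0(\L^\bu)$ is finitely copresented directly by hypothesis.

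I expect the main work to lie in this propagation of finiteness rather than in any conceptual difficulty. The terms $\L^n$ of the given complex are arbitrary $\C$\+comodules, so nothing about the size of $\J^n$ can be read off from $\L^n$; the whole argument hinges on the identification of $\ker(\J^{n-1}\rarrow W^{n-1})$ with $W^{n-2}$, which is what lets the finiteness be transported, step by step, from the cohomology comodules through the recursion. Granting this, the construction yields a complex $\J^\bu$ with $\J^n=0$ for $n<0$ and $\J^n$ finitely cogenerated cofree for $n\ge0$ --- in particular bounded below --- together with the required quasi-isomorphism $\L^\bu\rarrow\J^\bu$.
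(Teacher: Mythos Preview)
Your argument is correct and is precisely the ``standard step-by-step construction'' that the paper's proof alludes to without detail (the paper merely cites~\cite[Lemma~1.2]{Pfp} and~\cite[Lemma~B.1(c)]{Pmgm}).  Your careful propagation of finite copresentation through the auxiliary comodules $W^{n-1}$, using Lemma~\ref{finitely-presented}(a) together with the left cocoherence hypothesis, is exactly the substance of the construction.
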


\begin{proof}
 This is a standard step-by-step construction
(cf.~\cite[Lemma~1.2]{Pfp} or the proof of~\cite[Lemma~B.1(c)]{Pmgm}).
\end{proof}

 A finite complex of left $\C$\+comodules $\L^\bu$ is said to have
\emph{projective dimension\/~$\le d$} if one has
$\Hom_{\sD^\b(\C\comodl)}(\L^\bu,\M[n])=0$ for all left $\C$\+comodules
$\M$ and all the integers $n>d$.
 Similarly, a finite complex of left $\D$\+contramodules $\Q^\bu$ is
said to have \emph{injective dimension\/~$\le d$} if one has
$\Hom_{\sD^\b(\D\contra)}(\P,\Q^\bu[n])=0$ for all left
$\D$\+contramodules $\P$ and all $n>d$.

 The bounded above derived category $\sD^-(\D\contra)$ is equivalent
to the homotopy category $\Hot^-(\D\contra_\proj)$ of bounded above
complexes of projective left $\D$\+contramodules.
 Given a complex of right $\D$\+comodules $\N^\bu$ and a bounded above
complex of left $\D$\+contramodules $\P^\bu$, we denote by
$\Ctrtor^\D_*(\N^\bu,\P^\bu)$ the homology vector spaces
$$
 \Ctrtor^\D_n(\N^\bu,\P^\bu)=H^{-n}(\N^\bu\ocn_\D\fF^\bu)
$$
of the contratensor product of the complex $\N^\bu$ with a bounded
above complex of projective left $\D$\+contramodules $\fF^\bu$
quasi-isomorphic to the complex~$\P^\bu$.

 Here the bicomplex $\N^\bu\ocn_\D\fF^\bu$ is presumed to be totalized
by taking infinite direct sums along the diagonals.
 For any complex of right $\D$\+comodules $\N^\bu$, any bounded above
complex of left $\D$\+contramodules $\P^\bu$, and any $k$\+vector
space $V$ there are natural isomorphisms of $k$\+vector spaces
$$
 \Hom_k(\Ctrtor^\D_n(\N^\bu,\P^\bu),V)\simeq
 \Hom_{\sD(\D\contra)}(\P^\bu,\Hom_k(\N^\bu,V)[n]).
$$

 A finite complex of right $\D$\+comodules $\N^\bu$ is said to have
\emph{contraflat dimension\/~$\le d$} if one has
$\Ctrtor_n^\D(\N^\bu,\P)=0$ for all left $\D$\+contramodules $\P$ and
all the integers $n>d$.
 The contraflat dimension of a finite complex of right $\D$\+comodules
$\N^\bu$ is equal to the injective dimension of the finite complex of
left $\D$\+contramodules $\Q^\bu=\Hom_k(\N^\bu,V)$ for any $k$\+vector
space $V\ne0$.

\begin{lem} \label{contraflat-projective-dimension}
 If the coalgebra\/ $\D$ is right cocoherent and left co-Noetherian,
then the contraflat dimension of any finite complex of right\/
$\D$\+comodules\/ $\N^\bu$ does not exceed its projective dimension.
\end{lem}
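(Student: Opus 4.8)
Write $d$ for the projective dimension of $\N^\bu$, which we may assume finite; we must show $\Ctrtor^\D_n(\N^\bu,\P)=0$ for every left $\D$\+contramodule $\P$ and every $n>d$.

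The plan is to pass to the ring $\D^*$. As $\D$ is left co-Noetherian, Lemma~\ref{tensor-contratensor}(b) yields a natural isomorphism $\N\ocn_\D\P\simeq\N\ot_{\D^*}\P$ for all right $\D$\+comodules $\N$ and left $\D$\+contramodules $\P$. Moreover the free left $\D$\+contramodules $\Hom_k(\D,W)$ are flat as left $\D^*$\+modules whenever $\D$ is left co-Noetherian (for finitely generated free contramodules this is immediate, since then $\Hom_k(\D,W)\simeq\D^*\ot_kW$), so a projective resolution of $\P$ in $\D\contra$ is simultaneously a flat resolution of $\P$ over $\D^*$. Consequently $\Ctrtor^\D_*(\N^\bu,-)\simeq\Tor^{\D^*}_*(\N^\bu,-)$, and the contraflat dimension of $\N^\bu$ coincides with the flat dimension of the finite complex $\N^\bu$ over the ring $\D^*$. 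Since every left $\D^*$\+module is a filtered colimit of finitely presented ones, $\Tor$ commutes with filtered colimits, and (as recalled before Proposition~\ref{finitely-co-presented-hom}) the finitely presented left $\D^*$\+modules are precisely the finitely presented left $\D$\+contramodules, it is enough to prove $\Ctrtor^\D_n(\N^\bu,\P)=0$ for $n>d$ and every \emph{finitely presented} left $\D$\+contramodule $\P$ (for such $\P$ the identification $\Ctrtor^\D\simeq\Tor^{\D^*}$ is anyway elementary, as a resolution by finitely generated free contramodules is a resolution by free $\D^*$\+modules).

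Here the right cocoherence of $\D$ enters. By Proposition~\ref{finitely-co-presented-hom}(a) we have $\P\simeq\L^*=\Hom_k(\L,k)$ for a finitely copresented right $\D$\+comodule $\L$, and (the finitely copresented right $\D$\+comodules forming an abelian category with enough injectives, namely the finitely cogenerated cofree ones) $\L$ admits a bounded below coresolution $\L\rarrow\J^\bu$ with $\J^q=V_q\ot_k\D$ finitely cogenerated cofree. Applying $\Hom_k(-,k)$ turns this into a bounded above resolution $(\J^\bu)^*\rarrow\P$ by finitely generated free left $\D$\+contramodules, whence $\Ctrtor^\D_n(\N^\bu,\P)=H^{-n}(\N^\bu\ocn_\D(\J^\bu)^*)$. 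The termwise isomorphisms $\N\ocn_\D\Hom_k(\D,V)\simeq\N\ot_kV$ and $\Hom_{\D^\rop}(\N,V\ot_k\D)\simeq\Hom_k(\N,V)$ now identify the complex $\Hom_k(\N^\bu\ocn_\D(\J^\bu)^*,\.k)$ with the Hom complex $\Hom_{\D^\rop}(\N^\bu,\J^\bu)$ of right $\D$\+comodules; and since $\J^\bu$ is a bounded below complex of injective right $\D$\+comodules quasi-isomorphic to~$\L$, the latter computes $\Hom_{\sD^\b(\comodr\D)}(\N^\bu,\L[\,*\,])$. Dualizing back, $\Ctrtor^\D_n(\N^\bu,\P)^*\simeq\Hom_{\sD^\b(\comodr\D)}(\N^\bu,\L[n])$, which vanishes for $n>d$ by the definition of projective dimension; hence $\Ctrtor^\D_n(\N^\bu,\P)=0$.

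The delicate point, and the one I expect to cost the most work, is the first reduction: identifying the contraflat dimension (a condition imposed against \emph{all} contramodules) with the flat dimension of $\N^\bu$ over $\D^*$ --- equivalently, establishing that free left $\D$\+contramodules over a left co-Noetherian coalgebra are flat over $\D^*$, so that $\Ctrtor^\D$ is computed by $\Tor^{\D^*}$ for arbitrary, not merely finitely presented, arguments. The right cocoherence of $\D$, by contrast, is used only in the second step (to realize finitely presented contramodules as $k$\+duals of finitely copresented comodules and to resolve the latter by finite cofree coresolutions). Everything else is routine manipulation of the contratensor and Hom adjunctions, including the matching of signs in the totalized bicomplexes.
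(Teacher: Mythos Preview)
Your argument is correct and follows essentially the same route as the paper's proof: reduce the contraflat dimension computation to $\Tor^{\D^*}$ via Lemma~\ref{tensor-contratensor}(b), pass to finitely presented contramodules by writing an arbitrary $\D^*$\+module as a filtered colimit of finitely presented ones, and then identify $\Ctrtor^\D_n(\N^\bu,\L^*)^*$ with $\Hom_{\sD^\b(\comodr\D)}(\N^\bu,\L[n])$ (the paper does this last step by citing Proposition~\ref{finitely-co-presented-hom} directly rather than unpacking the resolution, but the content is the same).

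The point you single out as delicate is handled in the paper as follows, and you may want to record it: a free left $\D$\+contramodule $\Hom_k(\D,V)$ is a direct summand of a product $\prod\D^*$ of copies of $\D^*$ (embed $V$ as a direct summand of some power $k^I$); since $\D$ is left co-Noetherian it is left cocoherent, hence $\D^*$ is right coherent, and Chase's theorem then says products of flat left $\D^*$\+modules are flat.  This is the missing ingredient in your parenthetical (which only covers finite-dimensional~$V$), and once supplied the identification $\Ctrtor^\D\simeq\Tor^{\D^*}$ holds for arbitrary contramodule arguments, exactly as you need.
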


\begin{proof}
 Let $d$~be the projective dimension of the complex of right
$\D$\+comodules~$\N^\bu$.
 For any finitely copresented right $\D$\+comodule $\L$ there are
natural isomorphisms of complexes of vector spaces
$$
 \Hom_\D(\N^\bu,\L)\simeq\Hom^\D(\L^*,\N^\bu{}^*)
 \simeq (\N^\bu\ocn_\D\L^*)^*
$$
(see Proposition~\ref{finitely-co-presented-hom}), implying natural
isomorphisms of cohomology spaces
$$
 \Hom_{\sD^\b(\D\comodl)}(\N^\bu,\L[n])\simeq
 \Hom_{\sD^\b(\D\contra)}(\L^*,\N^\bu{}^*[n])\simeq
 \Ctrtor^\D_n(\N^\bu,\L^*)^*.
$$
 Since any finitely presented left $\D$\+contramodule $\P$ has
the form $\L^*$ for a certain finitely copresented right
$\D$\+contramodule~$\L$, it follows that the supremum of all
integers~$n$ for which there exists a finitely presented left
$\D$\+contramodule $\P$ with $\Ctrtor^\D_n(\N^\bu,\P)\ne0$ does
not exceed~$d$.

 Furthermore, by Lemma~\ref{tensor-contratensor}(b) the functor of
contratensor product $\ocn_\D$ is isomorphic to the tensor product
functor $\ot_{\D^*}$ over the algebra $\D^*$ on the whole categories
of arbitrary right $\D$\+comodules and left $\D$\+contramodules.
 Besides, the free $\D$\+contramodules are the direct summands of
infinite products of copies of the $\D$\+contramodule~$\D^*$.
 Since the coalgebra $\D$ is left (co-Noetherian and consequently)
cocoherent, the algebra $\D^*$ is right coherent, so infinite
products of flat left $\D^*$\+modules are flat.
 In particular, projective left $\D$\+contramodules are flat as
left modules over~$\D^*$.
 It follows that the functor $\Ctrtor^\D$ is isomorphic to
the derived functor $\Tor^{\D^*}$ of tensor product of (complexes of)
$\D^*$\+modules on the whole domain of definition of the former
derived functor.

 Finally, since the coalgebra $\D$ is left cocoherent, the algebra
$\D^*$ is right coherent and the abelian category of finitely presented
left $\D^*$\+modules is isomorphic to the abelian category of
finitely presented left $\D$\+contramodules.
 Any left $\D^*$\+module is a filtered inductive limit of finitely
presented ones, and the functor of tensor product over $\D^*$
preserves filtered inductive limits.
 The homological dimension of the functor $\Tor^\D_*(\N^\bu,{-})$
on the abelian category of finitely presented left $\D^*$\+modules
does not exceed~$d$, hence the homological dimension of this
derived functor on the abelian category of arbitrary left
$\D^*$\+modules does not exceed~$d$, either.
\end{proof}

 Now we finally come to the main definition of this section.
 Assume that the coalgebra $\C$ is left cocoherent and the coalgebra
$\D$ is right cocoherent.
 A finite complex of $\C$\+$\D$\+bicomodules $\B^\bu$ is called
a \emph{dedualizing complex} for the pair of coalgebras $\C$ and $\D$
if the following conditions hold:
\begin{enumerate}
\renewcommand{\theenumi}{\roman{enumi}}
\item the complex $\B^\bu$ has finite projective dimension as a complex
of left $\C$\+comod\-ules and finite contraflat dimension a complex of
right $\D$\+comodules;
\item the homothety maps
$\C^*\rarrow\Hom_{\sD^\b(\comodr\D)}(\B^\bu,\B^\bu[*])$ and
$\D^*{}^\rop\rarrow\Hom_{\sD^\b(\C\comodl)}(\B^\bu,\B^\bu[*])$
are isomorphisms of graded rings; and \hbadness=1700
\item the bicomodules of cohomology of the complex $\B^\bu$ are
finitely copresented left $\C$\+comodules and finitely copresented
right $\D$\+comodules.
\end{enumerate}

 Here the notation $\comodr\D$ stands for the abelian category of
right $\D$\+comodules, and $\sD^\b(\comodr\D)$ is its bounded derived
category.
 The dedualizing complex $\B^\bu$ itself is viewed as an object
of the bounded derived category $\sD^\b(\C\bicomod\D)$ of
the abelian category $\C\bicomod\D$ of $\C$\+$\D$\+bicomodules.

 The homothety maps are induced by the left action of the algebra
$\C^*$ by right $\D$\+comodule endomorphisms of (every term of)
the complex $\B^\bu$ and the right action of the algebra $\D^*$
by left $\C$\+comodule endomorphisms of~$\B^\bu$.

 We refer to the paper~\cite{Pfp} and the references therein for
a discussion of the classical notion of a dualizing complex over
a pair of noncommutative rings, after which the above definition is
largely modelled.
 A discussion of bicomodules can be found in~\cite[Section~2.6]{Prev}
and the references therein.

\begin{ex} \label{gorenstein-coalgebra-example}
 For any coassociative coalgebra $\C$, the homological dimensions
of the abelian categories of left $\C$\+comodules, right
$\C$\+comodules, and left $\C$\+contramodules coincide
(see~\cite[Section~4.5]{Pkoszul}, cf.~\cite[Corollary~1.9.4]{Pweak}).
 The common value of these three numbers (or infinity) is called
the \emph{homological dimension} of a coalgebra~$\C$.

 Let $\C$ be a left and right cocoherent coalgebra of finite
homological dimension.
 For example, the coalgebra dual to the algebra of quantum formal
power series from Examples~\ref{dual-to-taylor-series} satisfies
these assumptions.
 Then the one-term complex $\B^\bu=\C$ is a dedualizing complex
for the pair of coalgebras $(\C,\C)$, as the conditions~(i\+iii)
are obviously true for~$\B^\bu$.

 More generally, a coalgebra $\C$ is called \emph{left Gorenstein} if
it has finite projective dimension as a left $\C$\+comodule and
finite contraflat dimension as a right $\C$\+comodule.
 (The second condition can be rephrased by saying that the injective
dimension of the left $\C$\+contramodule~$\C^*$ is finite.)
 For any left and right cocoherent, left Gorenstein coalgebra $\C$,
the one-term complex $\B^\bu=\C$ is a dedualizing complex for
the pair of coalgebras $(\C,\C)$.
\end{ex}

\begin{ex} \label{dedualizing-comodule-example}
 Let $R$ be a finitely generated commutative algebra over a field~$k$
and $I\subset R$ be a maximal ideal.
 Then the quotient algebras $R/I^n$ are finite-dimensional, so
their dual vector spaces are cocommutative coalgebras over~$k$, as is
their inductive limit $\C=\varinjlim_n (R/I^n)^*$.
 According to Examples~\ref{dual-to-taylor-series} or
Lemma~\ref{co-contra-Artinian-Noetherian}(b), this coalgebra is
Artinian, and consequently, by Lemma~\ref{co-Noetherian-Artinian}(a),
co-Noetherian and cocoherent.
 The category of $\C$\+comodules is isomorphic to the category of
$I$\+torsion $R$\+modules in the sense of~\cite[Section~1]{Pmgm},
$\,\C\comodl\simeq R\modl_{I\tors}$.
 Moreover, the category of $\C$\+contramodules is isomorphic to
the category of $I$\+contramodule $R$\+modules as defined
in~\cite[Section~2]{Pmgm}, $\,\C\contra\simeq R\modl_{I\ctra}$
(see~\cite[Sections~2.1\+-2.3]{Prev}).

 Of course, the coalgebra $\C$ is cocommutative.
 A complex of $\C$\+comodules $\B^\bu$ is a dedualizing complex for
the pair of coalgebras $(\C,\C)$ in the sense of the above definition
if and only if it is a dedualizing complex of $I$\+torsion $R$\+modules
in the sense of the definition in~\cite[Section~4]{Pmgm}.
 Indeed, the two conditions~(i) are equivalent by
Lemma~\ref{contraflat-projective-dimension}; the two conditions~(ii)
are equivalent because $\fR=\C^*$, and the two conditions~(iii) are
equivalent since, the coalgebra $\C$ being Artinian, a $\C$\+comodule
is finitely copresented if and only if it is Artinian.
 In particular, the dedualizing complex of $I$\+torsion $R$\+modules
constructed in~\cite[Example~4.8]{Pmgm} provides an example of
a dedualizing complex of $\C$\+$\C$\+bicomodules.
\end{ex}

 For any $\C$\+$\D$\+bicomodule $\K$ and any left $\C$\+comodule $\M$,
the $k$\+vector space $\Hom_\C(\K,\M)$ is endowed with the left
$\D$\+contramodule structure of a subcontramodule of
the $\D$\+contramodule $\Hom_k(\K,\M)$.
 Similarly, for any $\C$\+$\D$\+bicomodule $\K$ and any left
$\D$\+contramodule $\P$, the contratensor product $\K\ocn_\D\P$
is endowed with the left $\C$\+comodule structure of a quotient
comodule of the left $\C$\+comodule $\K\ot_k\P$.
 For any $\C$\+$\D$\+bicomodule $\K$, any left $\C$\+comodule $\M$,
and any left $\D$\+contramodule $\P$, there is a natural adjunction
isomorphism of $k$\+vector spaces~\cite[Section~3.1]{Prev}
$$
 \Hom_\C(\K\ocn_\D\P\;\M)\simeq\Hom^\D(\P,\Hom_\C(\K,\M)).
$$

 The following theorem is the main result of this paper.

\begin{thm} \label{dedualizing-bicomodule-duality}
 Given a dedualizing complex\/ $\B^\bu$ for a left cocoherent
coalgebra\/ $\C$ and a right cocoherent coalgebra\/ $\D$ over
a field~$k$, for any symbol\/ $\st=\b$, $+$, $-$, $\varnothing$,
$\abs+$, $\abs-$, or\/~$\abs$ there is an equivalence of derived
categories~\textup{\eqref{mgm-equivalence}}
$$
 \sD^\st(\C\comodl)\simeq\sD^\st(\D\contra)
$$
provided by mutually inverse functors\/ $\boR\Hom_\C(\B^\bu,{-})$
and\/ $\B^\bu\ocn_\D^\boL{-}$.
\end{thm}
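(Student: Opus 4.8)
The plan is to construct the two derived functors, exhibit them as an adjoint pair, and then verify that the adjunction unit and counit are isomorphisms by reducing to the cofree $\C$\+comodules and the free $\D$\+contramodules, where conditions~(ii) take over.

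First I would set up the functors. Using the left $\C$\+comodule structure on $\B^\bu$, the contratensor product $\B^\bu\ocn_\D{-}$ is a right exact functor $\D\contra\rarrow\C\comodl$; since $\D\contra$ has enough projectives (the free contramodules), its left derived functor $\B^\bu\ocn_\D^\boL{-}$ is computed on complexes of projective left $\D$\+contramodules. Using the right $\D$\+comodule structure, $\Hom_\C(\B^\bu,{-})$ is a left exact functor $\C\comodl\rarrow\D\contra$ whose right derived functor $\boR\Hom_\C(\B^\bu,{-})$ is computed on complexes of injective $\C$\+comodules (direct summands of cofree comodules; note that $\C\comodl$, being a locally finite Grothendieck category, has coproducts of injectives injective). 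Condition~(i) is precisely what makes both derived functors of finite homological dimension: finite projective dimension of $\B^\bu$ as a complex of left $\C$\+comodules bounds the cohomological amplitude of $\boR\Hom_\C(\B^\bu,{-})$, and finite contraflat dimension of $\B^\bu$ as a complex of right $\D$\+comodules bounds that of $\B^\bu\ocn_\D^\boL{-}$. A triangulated functor of finite homological dimension extends from the bounded to the bounded-above, bounded-below, unbounded, and absolute derived categories alike (see~\cite[Appendix~A]{Pcosh}, \cite[Appendix~A]{Pmgm}); so for every symbol~$\st$ we obtain functors $\B^\bu\ocn_\D^\boL{-}\colon\sD^\st(\D\contra)\rarrow\sD^\st(\C\comodl)$ and $\boR\Hom_\C(\B^\bu,{-})\colon\sD^\st(\C\comodl)\rarrow\sD^\st(\D\contra)$.

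Next, the underived adjunction isomorphism $\Hom_\C(\B^\bu\ocn_\D\P,\M)\simeq\Hom^\D(\P,\Hom_\C(\B^\bu,\M))$ lifts to the derived level, making $\B^\bu\ocn_\D^\boL{-}$ left adjoint to $\boR\Hom_\C(\B^\bu,{-})$; it remains to show that the unit $\P^\bu\rarrow\boR\Hom_\C(\B^\bu,\B^\bu\ocn_\D^\boL\P^\bu)$ and the counit $\B^\bu\ocn_\D^\boL\boR\Hom_\C(\B^\bu,\M^\bu)\rarrow\M^\bu$ are isomorphisms. Since the composite functors have finite homological dimension, a standard truncation argument reduces this to the bounded derived categories, from which the remaining symbols~$\st$ follow (for the absolute versions, one uses that a finite-homological-dimension functor takes absolutely acyclic complexes to absolutely acyclic complexes). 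On $\sD^\b$, since $\B^\bu\ocn_\D^\boL{-}$ preserves coproducts and $\D^*=\Hom_k(\D,k)$ is a projective generator of $\D\contra$, the class of $\P^\bu$ on which the unit is an isomorphism is a triangulated subcategory closed under coproducts, so it suffices to treat $\P^\bu=\D^*$, and dually on the comodule side one reduces to the cofree comodules $\C\ot_k V$. For $\P^\bu=\D^*$ one has $\B^\bu\ocn_\D^\boL\D^*=\B^\bu\ocn_\D\D^*\simeq\B^\bu$, and the unit becomes the homothety map $\D^*{}^\rop\rarrow\Hom_{\sD^\b(\C\comodl)}(\B^\bu,\B^\bu[*])$, an isomorphism of graded rings concentrated in degree~$0$ by condition~(ii). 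For $\M^\bu=\C\ot_k V$, since $\C\ot_k V$ is injective, $\boR\Hom_\C(\B^\bu,\C\ot_k V)=\Hom_\C(\B^\bu,\C\ot_k V)\simeq\Hom_k(\B^\bu,V)$, and for $V=k$ the counit $\B^\bu\ocn_\D^\boL\B^\bu{}^*\rarrow\C$ is shown to be an isomorphism by dualizing: via the isomorphism $\Hom_k(\Ctrtor_n^\D(\B^\bu,\B^\bu{}^*),k)\simeq\Hom_{\sD(\D\contra)}(\B^\bu{}^*,\B^\bu{}^*[n])$ together with the anti-equivalence between finitely copresented right $\D$\+comodules and finitely presented left $\D$\+contramodules (Proposition~\ref{finitely-co-presented-hom}, which uses condition~(iii) and the right cocoherence of $\D$), it reduces to the other homothety isomorphism $\C^*\rarrow\Hom_{\sD^\b(\comodr\D)}(\B^\bu,\B^\bu[*])$ of condition~(ii).

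The main obstacle is the passage from a single generator to arbitrary complexes: showing that $\boR\Hom_\C(\B^\bu,{-})$ and $\B^\bu\ocn_\D^\boL{-}$ are compatible with the infinite coproducts (respectively products) arising in the d\'evissage, and that the resulting derived-category objects are indeed the prescribed coproducts of $\D^*$ (respectively of $\C$). Here all three defining conditions on $\B^\bu$ get used at once: the finiteness conditions~(i) keep everything inside the bounded derived categories; the homothety conditions~(ii) pin down the answer on a single generator; and the finitely copresented/presented cohomology conditions~(iii), in conjunction with the cocoherence of $\C$ and $\D$, the finite-copresentation versus finite-presentation duality of Section~\ref{coalgebra-finiteness}, and the representation of $\B^\bu$ by bounded-below complexes of finitely cogenerated cofree comodules (Lemma~\ref{finitely-cogenerated-cofree-complex}), are what allow one to control the behaviour of these derived functors under infinite (co)products and so propagate the isomorphism from a single generator to all of $\sD^\st$.
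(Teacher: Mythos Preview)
Your proposal is correct and follows essentially the same route as the paper: build the adjoint pair as derived functors of finite homological dimension (condition~(i), \cite[Appendix~B]{Pmgm}), reduce the unit and counit to one-term complexes and then to free contramodules and cofree comodules, and identify the resulting maps with the homothety maps of condition~(ii) using Proposition~\ref{finitely-co-presented-hom}.  The one place where your packaging differs from the paper is the ``main obstacle'' you correctly flag.  You push the reduction down to the single generators $\D^*$ and $\C$ and then must argue that the composite endofunctors preserve the (completed) coproducts in $\D\contra$ and $\C\comodl$; the paper instead stops one step earlier, at an arbitrary free contramodule $\Hom_k(\D,V)$ (resp.\ cofree comodule $\C\ot_kV$), and deals with the $V$\+dependence directly.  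Concretely, after replacing $\B^\bu$ by a bounded-below complex $\J^\bu$ of finitely cogenerated cofree comodules (Lemma~\ref{finitely-cogenerated-cofree-complex}, using condition~(iii)), the paper observes that the unit map $\Hom_k(\D,V)\rarrow\Hom_\C(\B^\bu,\J^\bu\ot_kV)$ is obtained from the case $V=k$ by applying the exact completed-tensor-product functor ${-}\ot\sphat\,V$ on profinite-dimensional vector spaces, while for the counit both sides visibly commute with ordinary direct sums in~$V$.  This explicit argument is exactly what proves the coproduct compatibility you need; the two presentations are equivalent, but the paper's avoids having to isolate ``$\boR\Hom_\C(\B^\bu,{-})$ preserves coproducts'' as a separate lemma.
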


\begin{proof}
 (Cf.\ the proofs of~\cite[Theorems~4.9 and~5.10]{Pmgm}.)
 Assume for simplicity of notation that the complex $\B^\bu$ is
concentrated in nonpositive cohomological degrees.
 Let $d$~be an integer greater or equal to both the projective
dimension of the complex $\B^\bu$ viewed as a complex of left
$\C$\+comodules and the contraflat dimension of $\B^\bu$ as
a complex of right $\D$\+comodules.

 To construct the image of a complex of left $\C$\+comodules $\M^\bu$
under the functor $\boR\Hom_\C(\B^\bu,{-})$, one has to choose
an exact sequence of complexes of left $\C$\+comodules
$0\rarrow\M^{\bu}\rarrow\J^{0,\bu}\rarrow\J^{1,\bu}\rarrow\dotsb$
with injective left $\C$\+comodules~$\J^{j,i}$.
 Then one applies the functor $\Hom_\C(\B^\bu,{-})$ to every complex
$0\rarrow\J^{0,i}\rarrow\J^{1,i}\rarrow\J^{2,i}\rarrow\dotsb$,
obtaining a nonnegatively graded complex of left $\D$\+contramodules
$0\rarrow\P^{0,i}\rarrow\P^{1,i}\rarrow\P^{2,i}\rarrow\dotsb$.
 According to the projective dimension condition on the dedualizing
complex~$\B^\bu$, the complex $\P^{\bu,i}$ has zero cohomology
contramodules at the cohomological degrees above~$d$; so it is
quasi-isomorphic to its canonical truncation complex
$\tau_{\le d}\P^{\bu,i}$.
 By the definition, one sets the object $\boR\Hom_\C(\B^\bu,\M^\bu)$
in the derived category $\sD^\st(\D\contra)$ to be represented
by the total complex of the bicomplex $\tau_{\le d}\P^{\bu,\bu}$
concentrated in the cohomological degrees $0\le j\le d$ and $i\in\Z$.

 Similarly, to construct the image of a complex of left
$\D$\+contramodules $\P^\bu$ under the functor $\B^\bu\ocn_\D^\boL{-}$,
one has to choose an exact sequence of complexes of left
$\D$\+contramodules $\dotsb\rarrow\fF^{-1,\bu}\rarrow\fF^{0,\bu}\rarrow
\P^\bu\rarrow0$ with projective left $\D$\+contramodules~$\fF^{j,i}$.
 Then one applies the functor $\B^\bu\ocn_\D{-}$ to every complex
$\dotsb\rarrow\fF^{-2,i}\rarrow\fF^{-1,i}\rarrow\fF^{0,i}\rarrow0$,
obtaining a nonpositively graded complex of left $\C$\+comodules
$\dotsb\rarrow\M^{-2,i}\rarrow\M^{-1,i}\rarrow\M^{0,i}\rarrow0$.
 According to the contraflat dimension condition on the complex
$\B^\bu$, the complex $\M^{\bu,i}$ has zero cohomology comodules at
the cohomological degrees below~$-d$; so it is quasi-isomorphic to
its canonical truncation complex $\tau_{\ge -d}\M^{\bu,i}$.
 One sets the object $\B^\bu\ocn_\D^\boL\P^\bu$ in the derived category
$\sD^\st(\C\comodl)$ to be represented by the total complex of
the bicomplex $\tau_{\ge-d}(\M^{\bu,\bu})$ concentrated in
the cohomological degrees $-d\le j\le0$ and $i\in\Z$.

 These constructions of two derived functors are but particular cases
of the construction of a derived functor of finite homological
dimension spelled out in~\cite[Appendix~B]{Pmgm}.
 According to the results of that appendix, the above constructions
produce well-defined triangulated functors $\boR\Hom_\C(\B^\bu,{-})\:
\sD^\st(\C\comodl)\rarrow\sD^\st(\D\contra)$ and $\B^\bu\ocn_\D^\boL{-}
\:\sD^\st(\D\contra)\rarrow\sD^\st(\C\comodl)$ for any derived category
symbol $\st=\b$, $+$, $-$, $\varnothing$, $\abs+$, $\abs-$, or~$\abs$.
 Moreover, the former functor is right adjoint to the latter one.
 All these assertions only depend on the first condition~(i) in
the definition of a dedualizing complex.

 It remains to prove that the adjunction morphisms are isomorphisms.
 Since the total complexes of finite acyclic complexes of complexes
are absolutely acyclic, in order to check that the morphism
$\P^\bu\rarrow\boR\Hom_\C(\B^\bu\;\B^\bu\ocn_\D^\boL\P^\bu)$ is
an isomorphism in the derived category $\sD^\st(\D\contra)$ for
all the $\st$\+bounded complexes of left $\D$\+contramodules $\P^\bu$
it suffices to consider the case of a one-term complex $\P^\bu=\P$
corresponding to a single $\D$\+contramodule~$\P$.
 Furthermore, since a morphism in $\sD^\b(\D\contra)$ is an isomorphism
whenever it is an isomorphism in $\sD^-(\D\contra)$, one can view
the one-term complex $\P$ as an object of the bounded above derived
category $\sD^-(\D\contra)$ and replace it with a free
$\D$\+contramodule resolution $\fF^\bu$ of the contramodule~$\P$.
 Applying the same totalization argument to the complex~$\fF^\bu$,
the question reduces to proving that the adjunction morphism
$\fF\rarrow\boR\Hom_\C(\B^\bu\;\B^\bu\ocn_\D^\boL\fF)$ is
an isomorphism in $\sD^\b(\D\contra)$ for any free left
$\D$\+contramodule~$\fF$.

 So let $V$ be a $k$\+vector space and $\fF=\Hom_k(\D,V)$ be the free
left $\D$\+contramodule generated by~$V$; then one has
$\B^\bu\ocn_\D^\boL\fF=\B^\bu\ocn_\D\fF=\B^\bu\ot_kV$.
 By the condition~(iii) together with
Lemma~\ref{finitely-cogenerated-cofree-complex}, there exists
a bounded below complex of finitely cogenerated cofree left
$\C$\+comodules $\J^\bu$ together with a quasi-isomorphism of
complexes of left $\C$\+comodules $\B^\bu\rarrow\J^\bu$.
 We have to check that the natural map
$$
 \Hom_k(\D,V)\lrarrow\Hom_\C(\B^\bu\;\J^\bu\ot_kV)
$$
is a quasi-isomorphism of complexes of left $\D$\+contramodules.

 The left-hand side is the projective limit of the vector spaces
$\Hom_k(\E,V)$ over all the finite-dimensional subcoalgebras
$\E\subset\D$, while the right-hand side is the projective limit
of the complexes of vector spaces
$\Hom_\E({}_\E\B^\bu\;{}_\E\J^\bu\ot_kV)$.
 In particular, the map $\D^*\rarrow\Hom_\C(\B^\bu,\J^\bu)$ is
a morphism of complexes of profinite-dimensional topological
vector spaces.
 Being a quasi-isomorphism of complexes of discrete/nontopological
vector spaces (with the topologies forgotten) by the condition~(ii),
it is consequently also a quasi-isomorphism of complexes in
the abelian category of profinite-dimensional topological
vector spaces.

 For any profinite-dimensional topological $k$\+vector space $K$
and any discrete $k$\+vector space $V$ one denotes by
$K\ot\sphat\,V$ the projective limit
$$\textstyle
 K\ot\sphat\,V=\varprojlim_U K/U\ot_kV
$$
taken over all the open subspaces $U\subset K$
\cite[Sections~2.3\+4]{Prev}.
 Equivalently, one can set $W^*\ot\sphat\,V=\Hom_k(W,V)$ for any
discrete $k$\+vector spaces $W$ and~$V$.
 Both the abelian categories of discrete and profinite-dimensional
vector spaces being semisimple, the additive functor
$\ot\sphat\,$ is exact.
 Now one has
$$\textstyle
 \varprojlim_\E\Hom_\E({}_\E\B^\bu\;{}_\E\J^\bu\ot_kV)\simeq
 \varprojlim_\E\Hom_\E({}_\E\B^\bu\;{}_\E\J^\bu)\ot\sphat\,V,
$$
since the complex of $\E$\+comodules ${}_\E\B^\bu$ is finite, while
the terms of the complex ${}_\E\J^\bu$ are finite-dimensional cofree
$\E$\+comodules. 
 Finally, the morphism of complexes in question is obtained by
applying the exact functor ${-}\ot\sphat\,V$ to the quasi-isomorphism
of complexes $\D^*\rarrow\Hom_\C(\B^\bu,\J^\bu)$.

 Similarly, in order to prove that the adjunction morphism
$\B^\bu\ocn_\D^\boL\boR\Hom_\C(\B^\bu,\M^\bu)\allowbreak\rarrow\M^\bu$
is an isomorphism in the derived category $\sD^\st(\C\comodl)$ for
any $\st$\+bounded complex of left $\C$\+comodules $\M^\bu$,
it suffices to check that this morphism is an isomorphism in
$\sD^\b(\C\comodl)$ for any cofree left $\C$\+comodule $\I$
viewed as a one-term complex in $\sD^\b(\C\comodl)$.
 Let $\I=\C\ot_k V$ be a cofree left $\C$\+comodule generated by
a $k$\+vector space $V$; then one has $\boR\Hom_\C(\B^\bu,\I)=
\Hom_\C(\B^\bu,\I)=\Hom_k(\B^\bu,V)$.
 Let $\J^\bu$ be a bounded below complex of finitely cogenerated
cofree right $\D$\+comodules endowed with a quasi-isomorphism of
complexes of right $\D$\+comodules $\B^\bu\rarrow\J^\bu$.
 Then $\Hom_k(\J^\bu,V)$ is a bounded above complex of free left
$\D$\+contramodules quasi-isomorphic to $\Hom_k(\B^\bu,V)$.
 We have to show that the map
$$
 \B^\bu\ocn_\D\Hom_k(\J^\bu,V)\lrarrow\C\ot_kV
$$
induced by the left $\C$\+coaction in $\B^\bu$ is a quasi-isomorphism
(of complexes of left $\C$\+comodules).

 The functors on both sides of our map preserve infinite
direct sums and inductive limits in the argument~$V$, so it suffices
to consider the case $V=k$.
 Passing to the dual vector spaces, we have to check that the map
$\C^*\rarrow\Hom^\D(\J^\bu{}^*\;\B^\bu{}^*)$ is a quasi-isomorphism.
 The latter map is the composition $\C^*\rarrow
\Hom_{\D^\rop}(\B^\bu,\J^\bu)\rarrow\Hom^\D(\J^\bu{}^*\;\B^\bu{}^*)$
of the homothety map of the condition~(ii) and the map induced by
the dualization functor $\N\longmapsto\N^*$.
 It remains to apply Proposition~\ref{finitely-co-presented-hom}(b).
\end{proof}

\Section{MGM Duality for Semialgebras}
\label{dedualizing-bisemimodules}

 Let $\C$ be a coassociative coalgebra over a field~$k$.
 Then the operation of cotensor product $\oc_\C$ (as defined
in the end of Section~\ref{coalgebra-finiteness}) provides the category
of $\C$\+$\C$\+bicomodules $\C\bicomod\C$ with an associative and
unital tensor category structure.
 The $\C$\+$\C$\+bicomodule $\C$ is the unit object.
 A (semiassociative and semiunital) \emph{semialgebra} over $\C$ is
an (associative and unital) algebra object in this tensor category.
 In other words, a semialgebra $\S$ over $\C$ is
a $\C$\+$\C$\+bicomodule endowed with $\C$\+$\C$\+bicomodule
morphisms of \emph{semiunit} $\C\rarrow\S$ and
\emph{semimultiplication} $\S\oc_\C\S\rarrow\S$ satisfying
the conventional associativity and unitality axioms.
 We refer to~\cite[Sections~0.3.1\+-2 and~1.3.1]{Psemi}
and~\cite[Sections~2.5\+-6]{Prev} for further details.

 The cotensor product operation also provides the category of left
$\C$\+comodules $\C\comodl$ with the structure of left module category
over the tensor category $\C\bicomod\C$ and the category of right
$\C$\+comodules $\comodr\C$ with the structure of right module
category over $\C\bicomod\C$.
 Furthermore, the functor of cohomomorphisms $\Cohom_\C$
(see Section~\ref{coalgebra-finiteness}) defined the structure of
a right module category over $\C\bicomod\C$ on the category opposite
to the category of left $\C$\+contramodules $\C\contra^\sop$.
 Given a semialgebra $\S$ over $\C$, one can consider module objects
over the algebra object $\S\in\C\bicomod\C$ in the module categories
$\C\comodl$, \,$\comodr\C$, and $\C\contra^\sop$ over the tensor
category $\C\bicomod\C$.
 This leads to the following definitions.

 A \emph{left semimodule} $\bM$ over $\S$ is a left $\C$\+comodule
endowed with a left $\C$\+comodule morphism of \emph{left
semiaction} $\S\oc_\C\bM\rarrow\bM$ satisfying the associativity and
unitality equations.
 A \emph{right semimodule} $\bN$ over $\S$ is a right $\C$\+comodule
endowed with a right $\C$\+comodule morphism of \emph{right
semiaction} $\bN\oc_\C\S\rarrow\bN$ satisfying the similar equations.
 Finally, a \emph{left semicontramodule} $\bP$ over $\S$ is a left
$\C$\+contramodule endowed with a left $\C$\+contramodule morphism of
\emph{left semicontraaction} $\bP\rarrow\Cohom_\C(\S,\bP)$ satisfying
the dual versions of the same equations.
 The details concerning semimodules can be found in the above
references; and we refer to~\cite[Sections~0.3.4\+-5 and~3.3.1]{Psemi}
and~\cite[Sections~2.5\+-6]{Prev} for further details about
semicontramodules.

 The $k$\+vector space of all morphisms $\bL\rarrow\bM$ in the category
of left $\S$\+semimodules $\S\simodl$ is denoted by $\Hom_\S(\bL,\bM)$.
 Given a left $\C$\+comodule $\L$, the left $\S$\+semi\-module
$\S\oc_\C\L$ is called the left $\S$\+semimodule \emph{induced} from
the left $\C$\+comodule~$\L$.
 For any left $\S$\+semimodule $\bM$, there is a natural isomorphism of
$k$\+vector spaces
$$
 \Hom_\S(\S\oc_\C\L\;\bM)\simeq\Hom_\C(\L,\bM).
$$

 The $k$\+vector space of all morphisms $\bP\rarrow\bQ$ in the category
of left $\S$\+semi\-contramodules $\S\sicntr$ is denoted by
$\Hom^\S(\bP,\bQ)$.
 For any right $\S$\+semimodule $\bN$ and $k$\+vector space $V$,
the left $\C$\+contramodule $\Hom_k(\bN,V)$ has a natural left
$\S$\+semicontramodule structure.
 Given a left $\C$\+contramodule $\Q$, the left $\C$\+contra\-module
$\Cohom_\C(\S,\Q)$ is endowed with a left $\S$\+semicontramodule
structure as a quotient semicontramodule of the left
$\S$\+semicontramodule $\Hom_k(\S,\Q)$.
 The left $\S$\+semicontramodule $\Cohom_\C(\S,\Q)$ is called
the left $\S$\+semicontramodule \emph{coinduced} from
the left $\C$\+contramodule~$\Q$.
 For any left $\S$\+semicontramodule $\bP$, there is a natural isomorphism
of $k$\+vector spaces
$$
 \Hom^\S(\bP,\Cohom_\C(\S,\Q))\simeq\Hom^\C(\bP,\Q).
$$

 Our next aim is to define the operation of \emph{contratensor product}
$\bN\Ocn_\S\bP$ of a right $\S$\+semimodule $\bN$ and a left
$\S$\+semicontramodule~$\bP$ \cite[Sections~0.3.7 and~6.1.1\+-2]{Psemi}.
 The idea is that $\bN\Ocn_\S\bP$ is a $k$\+vector space for which
the natural isomorphism
\begin{equation} \label{contratensor-semicontramodule-hom}
 \Hom_k(\bN\Ocn_\S\bP,\>V)\simeq\Hom^\S(\bP,\Hom_k(\bN,V))
\end{equation}
holds for any $k$\+vector space~$V$.
 This condition determines the $k$\+vector space $\bN\Ocn_\S\bP$
uniquely up to a natural isomorphism.
 The following explicit construction shows that such a vector space
exists.

 The contratensor product $\bN\Ocn_\S\bP$ is the cokernel of
(the difference of) the pair of natural $k$\+linear maps
$$
 (\bN\oc_\C\S)\ocn_\C\bP\birarrow\bN\ocn_\C\bP.
$$
 Here the first map is induced by the right $\S$\+semiaction morphism
$\bN\oc_\C\S\rarrow\bN$, while the second map is the composition of
the left $\S$\+semicontraaction morphism $\bP\rarrow\Cohom_\C(\S,\bP)$
and the natural ``evaluation'' map
$$
 \eta_\S\:(\bN\oc_\C\S)\ocn_\C\Cohom_\C(\S,\bP)\lrarrow\bN\ocn_\C\bP.
$$

 The ``evaluation'' map is defined for any two coalgebras $\C$ and $\D$
over~$k$, a $\C$\+$\D$\+bicomodule $\K$, a right $\C$\+comodule $\N$,
and a left $\C$\+contramodule $\P$,
$$
 \eta_\K\:(\N\oc_\C\K)\ocn_\D\Cohom_\C(\K,\P)\lrarrow\N\ocn_\C\P,
$$
and can be characterized by the condition that the dual map
$\eta_K^*=\Hom_k(\eta_\K,k)$ is equal to the map
$$
 \Hom^\C(\P,\N^*)\lrarrow\Hom^\D(\Cohom_\C(\K,\P),\Cohom_\C(\K,\N^*))
$$
provided by the functor $\Cohom_\C(\K,{-})\:\C\contra\rarrow\D\contra$.
 Even more explicitly, the $k$\+linear map~$\eta_\K$ is constructed as
the unique map forming a commutative square with the composition
of maps
$$
 (\N\oc_\C\K)\ot_k\Hom_k(\K,\P)\lrarrow\N\ot_k\K\ot_k\Hom_k(\K,\P)
 \lrarrow\N\ot_k\P
$$
and the natural surjections.
 We refer to~\cite[Section~6.1.1]{Psemi} for further details.

 For any right $\C$\+comodule $\N$ and left $\S$\+semicontramodule
$\bP$, there is a natural isomorphism of $k$\+vector
spaces~\cite[Section~6.1.2]{Psemi}
$$
 (\N\oc_\C\S)\Ocn_\S\bP\simeq\N\ocn_\C\bP.
$$

 The category $\S\simodl$ of left $\S$\+semimodules is abelian provided
that $\S$ is an injective right $\C$\+comodule.
 In fact, $\S$ is an injective right $\C$\+comodule \emph{if and
only if} the category $\S\simodl$ is abelian \emph{and} the forgetful
functor $\S\simodl\rarrow\C\comodl$ is exact.
 Similarly, the category $\S\sicntr$ of left $\S$\+semicontramodules is
abelian provided that $\S$ is an injective left $\C$\+comodule.
 In fact, $\S$ is an injective left $\C$\+comodule \emph{if and
only if} the category $\S\sicntr$ is abelian \emph{and} the forgetful
functor $\S\sicntr\rarrow\C\contra$ is exact.
 (See~\cite[Proposition~2.5]{Prev} for a proof of the dual versions
of these results.)

\bigskip

 Let $\S$ be a semialgebra over a coalgebra $\C$ over~$k$ and $\T$ be
a semialgebra over a coalgebra $\D$ over~$k$.
 An \emph{$\S$\+$\T$\+bisemimodule} $\bK$ is a $\C$\+$\D$\+bicomodule
endowed with a left $\S$\+semimodule and a right $\T$\+semimodule
structures such that the semiaction maps $\S\oc_\C\bK\rarrow\bK$ and
$\bK\oc_\D\T\rarrow\bK$ are morphisms of $\C$\+$\D$\+bicomodules
which commute with each other in the sense that the two compositions
$\S\oc_\C\bK\oc_\D\T\rarrow\bK\oc_\D\T\rarrow\bK$ and
$\S\oc_\C\bK\oc_\D\T\rarrow\S\oc_\C\bK\rarrow\bK$ coincide.
 Alternatively, one can define an $\S$\+$\T$\+bisemimodule as
a $\C$\+$\D$\+bicomodule endowed with a \emph{bisemiaction} map
$\S\oc_\C\bK\oc_\D\T\rarrow\bK$, which must be a morphism of
$\C$\+$\D$\+bicomodules satisfying the associativity and
unitality axioms.

 Let $\bK$ be an $\S$\+$\T$\+bisemimodule and $\bP$ be a left
$\T$\+semicontramodule.
 Assuming that $\S$ is an injective right $\C$\+comodule,
the contratensor product $\bK\Ocn_\T\bP$ then has a natural left
$\S$\+semimodule structure.
 Similarly, let $\bK$ be an $\S$\+$\T$\+bisemimodule and $\bM$ be
a left $\S$\+semimodule.
 Assuming that $\T$ is an injective left $\D$\+comodule,
the $k$\+vector space of left $\S$\+semimodule morphisms
$\Hom_\S(\bK,\bM)$ has a natural left $\T$\+semicontramodule
structure~\cite[Section~6.1.3]{Psemi}.

 Whenever $\S$ is an injective right $\C$\+comodule and $\T$ is
an injective left $\D$\+comodule, for any $\S$\+$\T$\+bisemimodule
$\bK$, any left $\S$\+semimodule $\bM$, and any right
$\T$\+semicontra\-module $\bP$, there is a natural adjunction
isomorphism of $k$\+vector spaces~\cite[Section~6.1.4]{Psemi}
$$
 \Hom_\S(\bK\Ocn_\T\bP,\>\bM)\simeq\Hom^\T(\bP,\Hom_\S(\bK,\bM)).
$$

 There are also some other situations in which there is a natural
left $\S$\+semimodule structure on the contratensor product
$\bK\Ocn_\T\bP$ and a natural left $\T$\+semicontramodule structure
on the space of homomorphisms $\Hom_\S(\bK,\bM)$.
 The case of $\S=\bK=\T$ is of particular interest.
 For any semialgebra $\S$ over a coalgebra $\C$, the functors
$\Phi_\S\:\bP\longmapsto\S\Ocn_\S\bP$ and $\Psi_\S\:
\bM\longmapsto\Hom_\S(\S,\bM)$ establish an equivalence between
the exact categories of $\C$\+injective left $\S$\+semimodules and
$\C$\+projective left $\S$\+semicontramodules~\cite[Section~6.2]{Psemi}.
 This equivalence forms a commutative square with the forgetful
functors and the equivalence between the additive categories of
injective left $\C$\+comodules and projective left $\C$\+contramodules
$\C\comodl_\inj\simeq\C\contra_\proj$ provided by the functors
$\Phi_\C=\C\ocn_\C{-}$ and $\Psi_\C=\Hom_\C(\C,{-})$.

\bigskip

 Let $\S$ be a semialgebra over a coalgebra $\C$ over~$k$ and $\T$ be
a semialgebra over a coalgebra $\D$ over~$k$.
 Assume that $\S$ is an injective right $\C$\+comodule and $\T$ is
an injective left $\D$\+comodule.
 So the categories $\S\simodl$ and $\T\sicntr$ are abelian.

\begin{prop}
\textup{(a)} There are enough injective objects in the abelian
category\/ $\S\simodl$.
 A left\/ $\S$\+semimodule is injective if and only if it is a direct
summand of a left\/ $\S$\+semimodule of the form\/
$\Phi_\S(\Hom_k(\S,V))$, where $V$ is a $k$\+vector space.
 Furthermore, the forgetful functor\/ $\S\simodl\rarrow\C\comodl$
preserves injectives. \par
\textup{(b)} There are enough projective objects in the abelian
category\/ $\T\sicntr$.
 A left\/ $\T$\+semicontramodule is projective if and only if it is
a direct summand of a left\/ $\T$\+semicontramodule of the form\/
$\Psi_\T(\T\ot_kV)$, where $V$ is a $k$\+vector space.
 Furthermore, the forgetful functor\/ $\T\sicntr\rarrow\D\contra$
preserves projectives.
\end{prop}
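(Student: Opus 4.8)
The plan is to prove part~(a) and obtain part~(b) by the evident duality (comodules $\leftrightarrow$ contramodules, injectives $\leftrightarrow$ projectives, $\Phi\leftrightarrow\Psi$, and the roles of $\C,\S$ interchanged with those of $\D,\T$). For~(a) the starting point is that the forgetful functor $U\colon\S\simodl\rarrow k\vect$ is exact and faithful: it is the composition of the forgetful functor $\S\simodl\rarrow\C\comodl$, which is exact because $\S$ is an injective right $\C$\+comodule, with the manifestly exact functor $\C\comodl\rarrow k\vect$. The crux is then to show that $U$ admits a right adjoint, the ``cofree $\S$\+semimodule'' functor, given explicitly by $V\longmapsto\Phi_\S(\Hom_k(\S,V))$; equivalently, that there is a natural isomorphism
$$
 \Hom_\S(\bM\;\Phi_\S(\Hom_k(\S,V)))\simeq\Hom_k(\bM,V)
$$
for all left $\S$\+semimodules $\bM$ and all $k$\+vector spaces $V$. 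Note that $\Phi_\S=\S\Ocn_\S{-}$ is defined on all of $\S\sicntr$ because $\S$ is $\C$\+injective as a right comodule, and that $\Hom_k(\S,V)$ is a left $\S$\+semicontramodule, namely the one coinduced from the free left $\C$\+contramodule $\Hom_k(\C,V)$, so that $\Hom_k(\S,V)\simeq\Cohom_\C(\S,\Hom_k(\C,V))$.

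I would prove the displayed isomorphism by unwinding these definitions, combining the adjunction $\Hom^\S(\bP,\Cohom_\C(\S,\Q))\simeq\Hom^\C(\bP,\Q)$, the isomorphism $(\N\oc_\C\S)\Ocn_\S\bP\simeq\N\ocn_\C\bP$ taken with $\N=\C$ (which in particular identifies the underlying $\C$\+comodule of $\Phi_\S(\bP)$ with $\Phi_\C(\bP)=\C\ocn_\C\bP$), and the coalgebra-level isomorphism $\Hom_\C(\L,\C\ot_kV)\simeq\Hom_k(\L,V)$; alternatively one may cite the analysis of ``semi-cofree'' semimodules in \cite[Section~6.2]{Psemi}. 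The same unwinding shows that the underlying left $\C$\+comodule of $\Phi_\S(\Hom_k(\S,V))$ is a direct summand of a cofree $\C$\+comodule: $\S$ being injective as a right $\C$\+comodule makes $\Hom_k(\S,V)$ a direct summand of a free left $\C$\+contramodule, and $\Phi_\C$ carries projective contramodules to injective comodules; so each $\Phi_\S(\Hom_k(\S,V))$ is $\C$\+injective.

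Granting this adjunction, the rest of~(a) is formal. The functor $\Hom_\S({-}\;\Phi_\S(\Hom_k(\S,V)))\simeq\Hom_k(U({-}),V)$ is a composition of exact functors, so every $\Phi_\S(\Hom_k(\S,V))$, and hence every direct summand of one, is an injective object of $\S\simodl$. For any $\bM$ the adjunction unit $\bM\rarrow\Phi_\S(\Hom_k(\S,\bM))$ is a monomorphism, because by the triangle identity its image under $U$ is a split monomorphism of vector spaces and $U$, being exact and faithful, reflects monomorphisms; this shows at once that $\S\simodl$ has enough injectives and that any injective left $\S$\+semimodule, being a direct summand of the object $\Phi_\S(\Hom_k(\S,\bM))$ into which it embeds, has the stated form. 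Finally, the forgetful functor $\S\simodl\rarrow\C\comodl$ preserves injectives because its left adjoint, the induction functor $\L\longmapsto\S\oc_\C\L$, is exact --- cotensor product with the injective right $\C$\+comodule $\S$ being an exact operation.

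Part~(b) is dual: the forgetful functor $\T\sicntr\rarrow k\vect$ is exact and faithful (the functor $\T\sicntr\rarrow\D\contra$ is exact since $\T$ is an injective left $\D$\+comodule); it has a left adjoint $V\longmapsto\Psi_\T(\T\ot_kV)$, where $\T\ot_kV\simeq\T\oc_\D(\D\ot_kV)$ is the semimodule induced from the cofree left $\D$\+comodule $\D\ot_kV$ and $\Psi_\T=\Hom_\T(\T,{-})$ is defined on all of $\T\simodl$ because $\T$ is $\D$\+injective as a left comodule, satisfying $\Hom^\T(\Psi_\T(\T\ot_kV),\bQ)\simeq\Hom_k(V,\bQ)$; the adjunction counit $\Psi_\T(\T\ot_k\bQ)\rarrow\bQ$ is an epimorphism since $U$ reflects epimorphisms; and the forgetful functor $\T\sicntr\rarrow\D\contra$ preserves projectives because its right adjoint, the coinduction functor $\Q\longmapsto\Cohom_\D(\T,\Q)$, is exact, which uses that $\T$ is an injective left $\D$\+comodule. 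The only real obstacle is the pair of adjunction isomorphisms identifying the cofree semimodule and free semicontramodule functors explicitly as $\Phi_\S(\Hom_k(\S,{-}))$ and $\Psi_\T(\T\ot_k{-})$; proving these is a matter of careful bookkeeping with the definitions of $\Ocn_\S$, $\Cohom_\C$, and the several adjunction and base-change isomorphisms recalled above, together with the commutative square relating $\Phi_\S,\Psi_\S$ to $\Phi_\C,\Psi_\C$ (it is, in the dual form, the content of \cite[Section~6.2]{Psemi}), after which everything reduces to the general nonsense of an exact, faithful forgetful functor possessing adjoints on the appropriate sides.
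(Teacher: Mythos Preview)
Your proposal is correct and follows essentially the same approach as the paper: both arguments hinge on the isomorphism $\Hom_\S(\bM,\Phi_\S(\Hom_k(\S,V)))\simeq\Hom_k(\bM,V)$ (which the paper obtains by citing \cite[Proposition~6.2.2(a)]{Psemi}, exactly as you suggest), deduce injectivity and enough injectives from it, and handle preservation of injectives via exactness of the left adjoint induction functor. Your framing in terms of a right adjoint to the exact faithful forgetful functor to $k\vect$ is slightly more categorical than the paper's direct presentation, but the substance is the same.
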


\begin{proof}
 A proof of this result under slightly more restrictive assumptions
(of a semialgebra injective over its coalgebra on both sides) can be
found in~\cite[Proposition~3.5]{Prev}.
 In the general case, there is an argument based on the results
of~\cite[Section~6.2]{Psemi}, proceeding as follows.

 Part~(a): the forgetful functor $\S\simodl\rarrow\C\comodl$ preserves
injectives, since it has an exact left adjoint functor $\S\oc_\C{-}$
assigning to a left $\C$\+comodule the induced left $\S$\+semimodule.
 To prove that the left $\S$\+semimodule
$\bM=\S\Ocn_\S\Hom_k(\S,V)$ is injective, one first notices that
$\Hom_k(\S,V)$ is a projective left $\C$\+contramodule, hence
$\bM$ is an injective left $\C$\+comodule and
$\Hom_\S(\S,\bM)=\Psi_\S(\bM)\simeq\Hom_k(\S,V)$.
 Applying~\cite[Proposition~6.2.2(a)]{Psemi} for $\T=\bK=\S$,
one computes that
$$
 \Hom_\S(\bL,\bM)\simeq\Hom_k(\bL,V)
$$
for any left $\S$\+semimodule~$\bL$.
 This proves that $\bM$ is injective; and in order to show that $\bL$
can be embedded into a left $\S$\+semimodule of the form
$\Phi_\S(\Hom_k(\S,V))$, it suffices to take $V=\bL$.

 Part~(b): the forgetful functor $\T\sicntr\rarrow\D\contra$ preserves
projectives, since it has an exact right adjoint functor
$\Cohom_\D(\T,{-})$ assigning to a left $\D$\+contramodule
the coinduced left $\T$\+semicontramodule.
 To prove that the left $\T$\+semicontramodule
$\bP=\Hom_\T(\T,\>\T\ot_kV)$ is projective, one first notices that
$\T\ot_kV$ is an injective left $\D$\+comodule, hence $\bP$ is
a projective left $\D$\+contramodule and
$\T\Ocn_\T\bP=\Phi_\T(\bP)\simeq\T\ot_kV$.
 Applying~\cite[Proposition~6.2.3(a)]{Psemi} for $\S=\bK=\T$,
one computes that
$$
 \Hom^\T(\bP,\bQ)\simeq\Hom_k(V,\bQ)
$$
for any left $\T$\+semicontramodule~$\bQ$.
 This proves that $\bP$ is projective; and in order to show that
$\bQ$ is a quotient object of a left $\T$\+semicontramodule of
the form $\Psi_\T(\T\ot_kV)$, it suffices to take $V=\bQ$.
\end{proof}

 A finite complex of left $\S$\+semimodules $\bL^\bu$ is said to have
\emph{projective dimension\/~$\le d$} if one has
$\Hom_{\sD^\b(\S\simodl)}(\bL^\bu,\bM[n])=0$ for all left
$\S$\+semimodules $\bM$ and all the integers $n>d$.
 The projective dimension of the complex of left $\S$\+semimodules
$\bL^\bu=\S\oc_\C\L^\bu$ induced from a finite complex of left
$\C$\+comodules $\L^\bu$ does not exceed the projective dimension
of the complex of left $\C$\+comodules~$\L^\bu$.

 Similarly, a finite complex of left $\T$\+semicontramodules $\bQ^\bu$
is said to have \emph{injective dimension\/~$\le d$} if one has
$\Hom_{\sD^\b(\T\sicntr)}(\bP,\bQ^\bu[n])=0$ for all left
$\T$\+semicontramodules $\bP$ and all $n>d$.
 The injective dimension of the complex of left $\T$\+semicontramodules
$\bQ^\bu=\Cohom_\D(\T,\Q^\bu)$ induced from a finite complex of
left $\D$\+contramodules $\Q^\bu$ does not exceed the injective
dimension of the complex of left $\D$\+contramodules~$\Q^\bu$.

 The bounded above derived category $\sD^-(\T\sicntr)$ is equivalent
to the homotopy category $\Hot^-(\T\sicntr_\proj)$ of bounded above
complexes of projective left $\T$\+semicontramodules.
 Given a complex of right $\T$\+semimodules $\bN^\bu$ and
a bounded above complex of left $\T$\+semicontramodules $\bP^\bu$,
we denote by $\CtrTor^\T_*(\bN^\bu,\bP^\bu)$ the homology
vector spaces
$$
 \CtrTor^\T_n(\bN^\bu,\bP^\bu)=H^{-n}(\bN^\bu\Ocn_\T\bF^\bu)
$$
of the contratensor product of the complex of right $\T$\+semimodules
$\bN^\bu$ with a bounded above complex of projective left
$\T$\+semicontramodules $\bF^\bu$ quasi-isomorphic to~$\bP^\bu$.

 Here one totalizes the bicomplex $\bN^\bu\Ocn_\T\bF^\bu$ by taking
infinite direct sums along the diagonals.
 For any complex of right $\T$\+semimodules $\bN^\bu$, any bounded
above complex of left $\T$\+semicontramodules $\bP^\bu$, and
a $k$\+vector space $V$ there are natural isomorphisms of
$k$\+vector spaces
$$
 \Hom_k(\CtrTor^\T_n(\bN^\bu,\bP^\bu),V)\simeq
 \Hom_{\sD(\T\sicntr)}(\bP^\bu,\Hom_k(\bN^\bu,V)[n]).
$$
 Alternatively, the derived functor $\CtrTor^\T$ can be computed using
$\T/\D$\+projective (or $\T/\D$\+contraflat) resolutions of the first
argument and $\D$\+projective resolutions of the second
argument~\cite[Sections~6.4\+-5]{Psemi}.

 A finite complex of right $\T$\+semimodules $\bN^\bu$ is said to have
\emph{contraflat dimension\/~$\le\nobreak d$} if one has
$\CtrTor_n^\T(\bN^\bu,\bP)=0$ for all left $\T$\+semicontramodules
$\bP$ and all the integers $n>d$.
 The contraflat dimension of the complex of right $\T$\+semimodules
$\bN^\bu=\N^\bu\oc_\D\T$ induced from a finite complex of right
$\D$\+comodules $\N^\bu$ does not exceed the contraflat dimension
of the complex of right $\D$\+comodules~$\N^\bu$.
 The contraflat dimension of a finite complex of
right $\T$\+semimodules $\bN^\bu$ is equal to the injective dimension
of the finite complex of left $\T$\+semicontramodules
$\bQ^\bu=\Hom_k(\bN^\bu,V)$ for any $k$\+vector space $V\ne0$.

\bigskip

 Now we come to the main definition of this section.
 Let $\S$ be a semialgebra over a coalgebra $\C$ over~$k$ and $\T$ be
a semialgebra over a coalgebra~$\D$ over~$k$.
 Assume that $\S$ is an injective right $\C$\+comodule, $\T$ is
an injective left $\D$\+comodule, the coalgebra $\C$ is
left cocoherent, and the coalgebra $\D$ is right cocoherent.
 A \emph{dedualizing complex} for $\S$ and $\T$ is defined as a triple
consisting of a finite complex of $\S$\+$\T$\+bisemimodules $\bB^\bu$,
a finite complex of $\C$\+$\D$\+bicomodules $\B^\bu$, and a morphism
of complexes of $\C$\+$\D$\+bicomodules $\B^\bu\rarrow\bB^\bu$
with the following properties:
\begin{enumerate}
\renewcommand{\theenumi}{\roman{enumi}}
\setcounter{enumi}{3}
\item $\B^\bu$ is a dedualizing complex for the pair of coalgebras
$\C$ and $\D$, that is the conditions~(i\+iii) of
Section~\ref{dedualizing-bicomodules} are satisfied;
\item the morphism of complexes of left $\S$\+semimodules
$\S\oc_\C\B^\bu\rarrow\bB^\bu$ induced by the morphism of
complexes of left $\C$\+comodules $\B^\bu\rarrow\bB^\bu$ is
a quasi-isomorphism;
\item the morphism of complexes of right $\T$\+semimodules
$\B^\bu\oc_\D\T\rarrow\bB^\bu$ induced by the morphism of
complexes of right $\D$\+comodules $\B^\bu\rarrow\bB^\bu$ is
a quasi-isomorphism.
\end{enumerate}

 It follows from the conditions~(i) and~(v) that the complex $\bB^\bu$
has finite projective dimension as a complex of left $\S$\+semimodules.
 Similarly, it follows from the conditions~(i) and~(vi) that
the complex $\bB^\bu$ has finite contraflat dimension as a complex
of right $\T$\+semimodules.

 Abusing the terminology, we will sometimes say that the complex
of $\S$\+$\T$\+bisemi\-modules $\bB^\bu$ is a dedualizing complex
(for the semialgebras $\S$ and~$\T$).

\begin{exs}
 Let $\S$ be a semialgebra over a coalgebra $\C$ over~$k$ such that
the coalgebra $\C$ is left and right cocoherent and left Gorenstein
(see Example~\ref{gorenstein-coalgebra-example}), while
the semialgebra $\S$ is an injective left $\C$\+comodule and
an injective right $\C$\+comodule.
 Then the triple consisting of the $\S$\+$\S$\+bisemimodule $\S$
(viewed as a one-term complex of $\S$\+$\S$\+bisemimodules),
the $\C$\+$\C$\+bicomodule $\C$ (also viewed as a one-term
complex of $\C$\+$\C$\+bicomodules), and the semiunit morphism
$\C\rarrow\S$ is a dedualizing complex for the pair of
semialgebras~$(\S,\S)$.

 In particular, any semialgebra $\S$ over a cosemisimple coalgebra $\C$
satisfies the above conditions (as cosemisimple coalgebras are
co-Noetherian and of homological dimension~$0$), so $\S$ is
a dedualizing complex of $\S$\+$\S$\+bisemimodules.
 This situation is a rather trivial case for the following theorem, 
though, as in this case the \emph{abelian} categories $\S\simodl$
and $\S\sicntr$ are already equivalent (the underived functors
$\Psi_\S=\Hom_\S(\S,{-})$ and $\Phi_\S=\S\Ocn_\S{-}$ providing
the equivalence).

 For example, let $G$ be a locally compact totally disconnected
(locally profinite) group and $H\subset G$ be a compact open subgroup.
 Let $k$ be a field.
 Then the $k$\+vector space $\C=k(H)$ of locally constant $k$\+valued
functions on $H$ has a natural structure of coalgebra over~$k$.
 Moreover, the $k$\+vector space $\S=k(G)$ of compactly supported
locally constant $k$\+valued functions on $G$ has a natural structure
of semialgebra over~$\C$.
 The semialgebra $\S$ is always an injective left and right
$\C$\+comodule.
 The category of (left or right) $\S$\+semimodules is isomorphic to
the abelian category $G\smooth_k$ of smooth $G$\+modules over~$k$,
while the category of (left or right) $\S$\+semicontramodules is
isomorphic to the abelian category $G\contra_k$ of $G$\+contramodules
over~$k$ (see the introduction to~\cite{Psm} and
the references therein).

 Assume that the proorder of the profinite group $H$ is not divisible
by the characteristic of the field~$k$.
 In particular, $G$ can be an arbitrary locally profinite group and $k$
a field of characteristic~$0$, or $G$ can be a $p$\+adic Lie group,
$H\subset G$ an open pro-$p$-subgroup, and $k$ a field
of characteristic different from~$p$.
 Then the coalgebra $\C=k(H)$ is cosemisimple.
 So the semialgebra $\S=k(G)$ is a dedualizing complex of
bisemimodules over itself.
 Moreover, the abelian categories $G\smooth_k$ and $G\contra_k$
of left $\S$\+semimodules and left $\S$\+semicontramodules
are equivalent.

 The situation in the natural characteristic~$p$ is more interesting.
 Let $G$ be a $p$\+adic Lie group and $H\subset G$ be a compact open
subgroup such that $H$ has no elements of order~$p$.
 Let $k$ be a field of characteristic~$p$.
 Then the coalgebra $\C=k(H)$ is left and right Artinian, since its
dual algebra $\C^*=k[[H]]$ is left and right Noetherian.
 Furthermore, the coalgebra $\C$ has finite homological dimension
(equal to the dimension of the group~$G$).
 Thus the semialgebra $\S=k(G)$ is a dedualizing complex of
$\S$\+$\S$\+bisemimodules.
 Hence the following theorem applies (cf.~\cite{Psm}).
\end{exs}

\begin{thm}
 Let\/ $\S$ be a semialgebra over a coalgebra\/ $\C$ and\/ $\T$ be
a semialgebra over a coalgebra\/ $\D$ over a field~$k$.
 Assume that the coalgebra\/ $\C$ is left cocoherent, the coalgebra\/
$\D$ is right cocoherent, the semialgebra\/ $\S$ is an injective
right\/ $\C$\+comodule, and the semialgebra\/ $\T$ is an injective
left\/ $\D$\+comodule.
 Let\/ $\B^\bu\rarrow\bB^\bu$ be a dedualizing complex for
the semialgebras\/ $\S$ and\/~$\T$.
 Then for any symbol\/ $\st=\b$, $+$, $-$, $\varnothing$, $\abs+$,
$\abs-$, or\/~$\abs$ there is an equivalence of triangulated
categories~\textup{\eqref{semialg-mgm-equivalence}}
$$
 \sD^\st(\S\simodl)\simeq\sD^\st(\T\sicntr)
$$
provided by mutually inverse functors\/ $\boR\Hom_\S(\bB^\bu,{-})$
and\/ $\bB^\bu\Ocn_\T^\boL{-}$.
\end{thm}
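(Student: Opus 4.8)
The plan is to carry over the proof of Theorem~\ref{dedualizing-bicomodule-duality} to the semialgebra setting, using the exact and faithful forgetful functors $\S\simodl\rarrow\C\comodl$ and $\T\sicntr\rarrow\D\contra$ (which we denote $\bM^\bu\mapsto\bM^\bu|_\C$ and $\bP^\bu\mapsto\bP^\bu|_\D$ on complexes) to transfer the decisive quasi-isomorphism verifications down to the coalgebra level, where that theorem applies. Assume, as one may, that $\bB^\bu$ is concentrated in nonpositive cohomological degrees, and fix an integer $d$ bounding both the projective dimension of $\bB^\bu$ as a complex of left $\S$\+semimodules and its contraflat dimension as a complex of right $\T$\+semimodules; both are finite by conditions~(i) and~(v), resp.\ conditions~(i) and~(vi). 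To construct $\boR\Hom_\S(\bB^\bu,\bM^\bu)$ for $\bM^\bu\in\sD^\st(\S\simodl)$, one chooses a right resolution $0\rarrow\bM^\bu\rarrow\bJ^{0,\bu}\rarrow\bJ^{1,\bu}\rarrow\dotsb$ by complexes of injective left $\S$\+semimodules, applies $\Hom_\S(\bB^\bu,{-})$ to its columns---obtaining complexes of left $\T$\+semicontramodules, since $\T$ is an injective left $\D$\+comodule---and totalizes the canonical truncation $\tau_{\le d}$ of the resulting bicomplex; dually, $\bB^\bu\Ocn_\T^\boL\bP^\bu$ is built from a left resolution of $\bP^\bu$ by complexes of projective left $\T$\+semicontramodules, to whose columns one applies $\bB^\bu\Ocn_\T{-}$ (yielding complexes of left $\S$\+semimodules, since $\S$ is an injective right $\C$\+comodule) and totalizes $\tau_{\ge-d}$. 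By the formalism of derived functors of finite homological dimension of~\cite[Appendix~B]{Pmgm}, these are well-defined triangulated functors $\boR\Hom_\S(\bB^\bu,{-})\colon\sD^\st(\S\simodl)\rarrow\sD^\st(\T\sicntr)$ and $\bB^\bu\Ocn_\T^\boL{-}\colon\sD^\st(\T\sicntr)\rarrow\sD^\st(\S\simodl)$ for every symbol~$\st$, the former right adjoint to the latter; this part of the argument uses only conditions~(i), (v), and~(vi).

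It remains to prove that the adjunction unit $\bP^\bu\rarrow\boR\Hom_\S(\bB^\bu\;\bB^\bu\Ocn_\T^\boL\bP^\bu)$ and counit $\bB^\bu\Ocn_\T^\boL\boR\Hom_\S(\bB^\bu,\bM^\bu)\rarrow\bM^\bu$ are isomorphisms. Since the total complex of a finite acyclic complex of complexes is absolutely acyclic, the same reduction as in the proof of Theorem~\ref{dedualizing-bicomodule-duality}---passing to canonical truncations, then to projective resp.\ injective resolutions in $\sD^-(\T\sicntr)=\Hot^-(\T\sicntr_\proj)$ resp.\ $\sD^+(\S\simodl)$, and totalizing again---reduces the claim, uniformly in~$\st$, to the case where $\bP=\Psi_\T(\T\ot_kV)$ is a single projective left $\T$\+semicontramodule and $\bM=\Phi_\S(\Hom_k(\S,V))$ a single injective left $\S$\+semimodule, for a $k$\+vector space~$V$. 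For such $\bP$ and $\bM$ all the relevant complexes are honest bounded ones: $\bB^\bu\Ocn_\T^\boL\bP=\bB^\bu\Ocn_\T\bP$ since $\bP$ is projective, and $\boR\Hom_\S(\bB^\bu,\bM)=\Hom_\S(\bB^\bu,\bM)$ since $\bM$ is injective. Hence it suffices to show that the unit and counit are \emph{quasi-isomorphisms of bounded complexes}; and a morphism of complexes of left $\T$\+semicontramodules (resp.\ left $\S$\+semimodules) is a quasi-isomorphism precisely when its image under the exact faithful forgetful functor to complexes of left $\D$\+contramodules (resp.\ left $\C$\+comodules) is.

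The key point is that the forgetful functors intertwine the two constructions with their coalgebra-level analogues. For a projective left $\T$\+semicontramodule $\bF$, condition~(vi) gives a quasi-isomorphism $\B^\bu\oc_\D\T\rarrow\bB^\bu$ of complexes of right $\T$\+semimodules; since $\bF$ is projective the functor ${-}\Ocn_\T\bF$ on right $\T$\+semimodules is exact (see~\cite[Section~6.1]{Psemi}), so applying it and using the natural isomorphism $(\N\oc_\D\T)\Ocn_\T\bF\simeq\N\ocn_\D(\bF|_\D)$ one finds that the underlying complex of left $\C$\+comodules of $\bB^\bu\Ocn_\T\bF$ is quasi-isomorphic to $\B^\bu\ocn_\D(\bF|_\D)$, where $\bF|_\D$ is projective. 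Dually, for an injective left $\S$\+semimodule $\bJ$, condition~(v) together with the injectivity of $\bJ$ yields $\Hom_\S(\bB^\bu,\bJ)\simeq\Hom_\S(\S\oc_\C\B^\bu,\bJ)=\Hom_\C(\B^\bu,\bJ)$, whose underlying complex of left $\D$\+contramodules is $\Hom_\C(\B^\bu,\bJ|_\C)$ with $\bJ|_\C$ an injective left $\C$\+comodule. Running the resolutions and truncations from the definitions of the two functors---here one uses that the forgetful functors, being exact and preserving injectives resp.\ projectives, carry injective resolutions in $\S\simodl$ to injective resolutions in $\C\comodl$ and projective resolutions in $\T\sicntr$ to projective resolutions in $\D\contra$---one obtains, after forgetting, natural identifications of $\bB^\bu\Ocn_\T^\boL\bP^\bu$ with $\B^\bu\ocn_\D^\boL(\bP^\bu|_\D)$ and of $\boR\Hom_\S(\bB^\bu,\bM^\bu)$ with $\boR\Hom_\C(\B^\bu,\bM^\bu|_\C)$, compatible with the units and counits. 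Consequently, after applying the forgetful functors, the unit and counit for $\bP$ and $\bM$ become the adjunction unit and counit of Theorem~\ref{dedualizing-bicomodule-duality} for the dedualizing complex $\B^\bu$ (whose defining properties hold by condition~(iv)), which are isomorphisms; so the original unit and counit are quasi-isomorphisms of bounded complexes, hence isomorphisms in $\sD^\st$ for every~$\st$, and $\boR\Hom_\S(\bB^\bu,{-})$ and $\bB^\bu\Ocn_\T^\boL{-}$ are mutually inverse equivalences.

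The part requiring the most care---the main obstacle---is the structure bookkeeping of the previous paragraph: one must verify that the left $\S$\+semimodule structure on $\bB^\bu\Ocn_\T\bF$ and the left $\T$\+semicontramodule structure on $\Hom_\S(\bB^\bu,\bJ)$ restrict, under the forgetful functors, to exactly the left $\C$\+comodule structure on $\bB^\bu\ocn_\D(\bF|_\D)$ coming from the bicomodule structure of $\B^\bu$, resp.\ the left $\D$\+contramodule structure on $\Hom_\C(\B^\bu,\bJ|_\C)$, and that all of these identifications are natural and compatible with the differentials, with the truncations $\tau_{\le d}$ and $\tau_{\ge-d}$, and with the adjunction units and counits on both levels. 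This is essentially a matter of unwinding the definitions of the semialgebra-level operations from~\cite[Sections~6.1--6.2]{Psemi}; no homological input beyond Theorem~\ref{dedualizing-bicomodule-duality} and the preliminary facts on semimodules and semicontramodules recalled above is needed.
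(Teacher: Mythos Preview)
Your proposal is correct and follows essentially the same approach as the paper's proof: construct the two derived functors via the finite-homological-dimension formalism of~\cite[Appendix~B]{Pmgm}, establish that they are adjoint, show (using conditions~(v) and~(vi) together with the absolute acyclicity of totalizations of finite acyclic complexes of complexes and the exactness of ${-}\Ocn_\T\bF$ for projective~$\bF$) that they form commutative squares with the forgetful functors and the coalgebra-level functors $\boR\Hom_\C(\B^\bu,{-})$, $\B^\bu\ocn_\D^\boL{-}$, and then deduce that the adjunction morphisms are isomorphisms from the conservativity of the forgetful functors on the conventional derived categories together with Theorem~\ref{dedualizing-bicomodule-duality}. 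The only organizational difference is that you carry the reduction one step further---to a single projective semicontramodule $\bP=\Psi_\T(\T\ot_kV)$ and a single injective semimodule $\bM=\Phi_\S(\Hom_k(\S,V))$---mirroring the reduction in the proof of Theorem~\ref{dedualizing-bicomodule-duality} itself, whereas the paper stops at $\st=\b$ and invokes conservativity directly; both routes are equivalent, and your final ``bookkeeping'' paragraph correctly identifies where the care is needed.
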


\begin{proof}
 The constructions of the derived functors $\boR\Hom_\S(\bB^\bu,{-})$
and $\bB^\bu\Ocn_\T^\boL{-}$ proceed exactly in the same way as in
the proof of Theorem~\ref{dedualizing-bicomodule-duality} (with $\C$
replaced by $\S$, left $\C$\+comodules by left $\S$\+semimodules, $\D$
replaced by $\T$, left $\D$\+contramodules by left
$\T$\+semicontramodules, and the complex of bicomodules $\B^\bu$
replaced by the complex of bisemimodules~$\bB^\bu$).
 The only property of a finite complex of $\S$\+$\T$\+bisemimodules
$\bB^\bu$ that is used in these constructions is the finiteness of
the projective and contraflat dimensions.
 The result of~\cite[Appendix~B]{Pmgm} tells that the two derived
functors so obtained are adjoint to each other.

 Next it is noticed that the two pairs of adjoint derived functors
corresponding to the complex of $\C$\+$\D$\+bicomodules $\B^\bu$
and the complex of $\S$\+$\T$\+bisemimodules $\bB^\bu$ form
commutative diagrams with the forgetful functors (cf.\ the proof
of~\cite[Theorem~5.6]{Pfp})
$$\dgARROWLENGTH=4em
\!\!\!
\begin{diagram} 
\node{\sD^\st(\S\simodl)}
\arrow[2]{e,t}{\boR\Hom_\S(\bB^\bu,{-})}
\arrow{s}
\node[2]{\sD^\st(\T\sicntr)} \arrow{s} \\
\node{\sD^\st(\C\comodl)}
\arrow[2]{e,t}{\boR\Hom_\C(\B^\bu,{-})}
\node[2]{\sD^\st(\D\contra)}
\end{diagram}
\,
\begin{diagram} 
\node{\sD^\st(\S\simodl)}
\arrow{s}
\node[2]{\sD^\st(\T\sicntr)} \arrow{s}
\arrow[2]{w,t}{\bB^\bu\Ocn_\T^\boL{-}} \\
\node{\sD^\st(\C\comodl)}
\node[2]{\sD^\st(\D\contra)}
\arrow[2]{w,t}{\B^\bu\ocn_\D^\boL{-}}
\end{diagram}
\!\!\!
$$
 This follows from the conditions~(v\+vi); the argument is that
the total complex of a finite acyclic complex of complexes is
absolutely acyclic.
 The observation that the functor of contratensor product
${-}\Ocn_\T\bF$ with a projective left $\T$\+semicontramodule $\bF$
is exact on the abelian category of right $\T$\+semimodules
$\simodr\T$ plays a role here (see the natural
isomorphism~\eqref{contratensor-semicontramodule-hom}).

 Finally, just as in the proof of
Theorem~\ref{dedualizing-bicomodule-duality}, checking that
the adjunction morphisms for the derived functors
$\boR\Hom_\S(\bB^\bu,{-})$ and $\bB^\bu\Ocn_\T^\boL{-}$
are isomorphisms reduces to the case of the bounded derived
categories, $\st=\b$.
 This is a conventional derived category; and for all the conventional
derived categories ($\st=\b$, $+$, $-$, or~$\varnothing$)
the forgetful functors $\sD^\st(\S\simodl)\rarrow
\sD^\st(\C\comodl)$ and $\sD^\st(\T\sicntr)\rarrow
\sD^\st(\D\contra)$ are conservative.
 So it suffices to show that the images of the adjunction morphisms
under the forgetful functors are isomorphisms.
 Similarly to the proof of~\cite[Theorem~5.6]{Pfp}, one observes
that these images are nothing but the adjunction morphisms for
the derived functors $\boR\Hom_\C(\B^\bu,{-})$ and
$\B^\bu\ocn_\D^\boL{-}$.
 According to the result of
Theorem~\ref{dedualizing-bicomodule-duality}, we already know
that the latter are isomorphisms.
\end{proof}

\bigskip

\end{document}